\tikzstyle{arrow} = [ultrathick,>=stealth]
\tikzstyle{block} = [rectangle, minimum width=3cm, minimum height=1cm, align=flush center, draw=black, thick]
\def\rep{\replaced}
\newcommand\RSloop{\@ifnextchar\bgroup\RSloopa\RSloopb}
\newcommand\RSloopa[1]{\bgroup\RSloop#1\relax\egroup\RSloop}
\newcommand\RSloopb[1]%
\newcommand\X{0}
\newcommand\RS[1]%
\newcommand\RSdef[1]{\expandafter\def\csname RS:#1\endcsname}
\newlength\RSu
\newcommand{\tone}[1]{#1^{\scalebox{0.8}{\RS{lr}}}}
\newcommand{\ttwo}[1]{#1^{\scalebox{0.8}{\RS{rLrl}}}}
\newcommand{\tthree}[1]{#1^{\scalebox{0.8}{\RS{rLrLrl}}}}
\newcommand{\tfour}[1]{#1^{\scalebox{0.8}{\RS{{Lrl}{Rrl}}}}}
\newcommand{\tze}[1]{#1^{\scalebox{0.8}{\RS{r}}}}
\definecolor{darkergreen}{rgb}{0.0, 0.5, 0.0}
\long\def\zhu#1{{\color{red}\footnotesize Zhu:\ #1}}
\numberwithin{equation}{section}
\def\theequation{\arabic{section}.\arabic{equation}}
\newcommand{\be}{\begin{eqnarray}}
\newcommand{\ee}{\end{eqnarray}}
\newcommand{\ce}{\begin{eqnarray*}}
\newcommand{\de}{\end{eqnarray*}}
\newtheorem{theorem}{Theorem}[section]
\newtheorem{lemma}[theorem]{Lemma}
\newtheorem{remark}[theorem]{Remark}
\newtheorem{definition}[theorem]{Definition}
\newtheorem{proposition}[theorem]{Proposition}
\newtheorem{Examples}[theorem]{Example}
\newtheorem{corollary}[theorem]{Corollary}
\newcommand{\VV}{\mathscr{V}}
\def\eps{\varepsilon}
\def\e{\mathrm{e}}
\def\u{\mathbf{u}}
\def\p{\partial}
\def\[{{\Big[}}
\def\]{{\Big]}}
\def\<{{\langle}}
\def\>{{\rangle}}
\def\({{\Big(}}
\def\){{\Big)}}
\def\bx{{\mathbf{x}}}
\def\tr{\mathrm {tr}}
\def\dif{{\mathord{{\rm d}}}}
\def\no{\nonumber}
\def\={&\!\!=\!\!&}
\def\bB{{\mathbf B}}
\def\bC{{\mathbf C}}
\def\cS{{\mathcal S}}
\def\mA{{\mathbb A}}
\def\mB{{\mathbb B}}
\def\mE{{\mathbb E}}
\def\mH{{\mathbb H}}
\def\mI{{\mathbb I}}
\def\mL{{\mathbb L}}
\def\mN{{\mathbb N}}
\def\mP{{\mathbb P}}
\def\mR{{\mathbb R}}
\def\mS{{\mathbb S}}
\def\mZ{{\mathbb Z}}
\def\bB{{\mathbf B}}
\def\1{{\mathbf{1}}}
\def\sA{{\mathscr A}}
\def\sC{{\mathscr C}}
\def\sF{{\mathscr F}}
\def\sI{{\mathscr I}}
\def\sL{{\mathscr L}}
\def\sV{{\mathscr V}}
\def\sW{{\mathscr W}}
\def\sY{{\mathscr Y}}
\def\E{\mathbb E}
\def\geq{\geqslant}
\def\leq{\leqslant}
\def\eps{\varepsilon}
\def\e{\mathrm{e}}
\def\u{\mathbf{u}}
\def\p{\partial}
\def\[{{\Big[}}
\def\]{{\Big]}}
\def\<{{\langle}}
\def\>{{\rangle}}
\def\({{\Big(}}
\def\){{\Big)}}
\def\bx{{\mathbf{x}}}
\def\tr{\mathrm {tr}}
\def\dif{{\mathord{{\rm d}}}}
\def\no{\nonumber}
\def\={&\!\!=\!\!&}
\def\bt{\begin{theorem}}
\def\et{\end{theorem}}
\def\bl{\begin{lemma}}
\def\el{\end{lemma}}
\def\br{\begin{remark}}
\def\er{\end{remark}}
\def\bx{\begin{Examples}}
\def\ex{\end{Examples}}
\def\bd{\begin{definition}}
\def\ed{\end{definition}}
\def\bp{\begin{proposition}}
\def\ep{\end{proposition}}
\def\bc{\begin{corollary}}
\def\ec{\end{corollary}}
\def\geq{\geqslant}
\def\leq{\leqslant}
 \def\R{\mathbb R}
 \def\R{\mathbb R}
\def\<{\langle} \def\>{\rangle}
\newcommand{\Prec}{\prec\!\!\!\prec}
\begin{document}

\title{Singular HJB equations with applications to KPZ on the real line}

\author{Xicheng Zhang}
\address[X. Zhang]{School of Mathematics and Statistics, Wuhan University, Wuhan,
Hubei 430072, P.R.China
}
\email{XichengZhang@gmail.com}

\author{Rongchan Zhu}
\address[R. Zhu]{Department of Mathematics, Beijing Institute of Technology, Beijing 100081, China; Fakult\"at f\"ur Mathematik, Universit\"at Bielefeld, D-33501 Bielefeld, Germany}
\email{zhurongchan@126.com}

\author{Xiangchan Zhu}
\address[X. Zhu]{ Academy of Mathematics and Systems Science,
Chinese Academy of Sciences, Beijing 100190, China; Fakult\"at f\"ur Mathematik, Universit\"at Bielefeld, D-33501 Bielefeld, Germany}
\email{zhuxiangchan@126.com}

\begin{abstract}

This paper is devoted to studying the Hamilton-Jacobi-Bellman  equations with distribution-valued coefficients, which is not well-defined in the classical sense 
and shall be understood by using paracontrolled distribution method introduced in \cite{GIP15}.  By a new characterization of weighted H\"older space and  
Zvonkin's transformation we prove some new a priori estimates, and therefore, establish the global well-posedness for singular HJB equations.
As an application, the global well-posedness for KPZ equations on the real line in polynomial weighted H\"older spaces is obtained without using Cole-Hopf's transformation. In particular, we solve the conjecture posed in \cite[Remark 1.1]{PR18}.

\end{abstract}

\subjclass[2010]{60H15; 35R60}
\keywords{singular SPDEs; 
HJB equations; KPZ equations; paracontrolled distributions; global well-posedness; Zvonkin's transform}

\date{\today}

\maketitle

\tableofcontents

\section{Introduction}

In this paper we are concerned with 
the following singular Hamilton-Jacobi-Bellman equation in $\mR^d$ (abbreviated as HJB):
\begin{equation} 
\label{eq:1}
\sL u:=\left( \partial_t - \Delta \right) u =  H(\nabla u)+ b \cdot \nabla u + f, \quad u (0) = u_0,
\end{equation}
where $H:\mR^d\to\mR$ is a locally Lipschitz function of at most quadratic growth, and
for some $\alpha\in(\frac{1}{2},\frac{2}{3})$ and $\kappa\in(0,1)$,
$$ 
b \in L^\infty_T \bC^{- \alpha} (\rho_{\kappa}), \quad f \in L^\infty_T\bC^{-\alpha}   (\rho_{\kappa}).
$$
Here  $\rho_\kappa(x) :=\<x\>^{-\kappa}:=(1 + | x |^2)^{-\kappa / 2}$ and $\bC^{- \alpha} (\rho_{\kappa})$ stands for the weighted H\"older (or Besov) space (see Section \ref{sec:2.1}). 

\medskip

It is well known that HJB equation appears originally in optimal control theory, whose solution represents the value function of an optimal control problem 
(see \cite{Kry80, YZ99, FS06}). Let us consider the following stochastic optimal control problem:
$$
V(t,x):=\inf_{\gamma}\mE\left[\int^T_t L(s,X^\gamma_s(x),\gamma(s))\dif s+\psi(X^\gamma_T(x))\right],
$$
where the infimum is taken for all controls $\gamma$ being in some class of adapted processes, 
$L$ is the cost function, $\psi$ is the final bequest value, 
and $X^\gamma_t(x)=X^\gamma_t$ is the state process and solves the following SDE:
$$
\dif X^\gamma_t=(b(t,X^\gamma_t)+\gamma_t)\dif t+\sqrt{2}\dif W_t,\ X^\gamma_0=x,
$$
where $W$ is a $d$-dimensional Brownian motion.
Let 
$$
H(t,x,Q):=\inf_{v\in\mR^d}(v\cdot Q+L(t,x,v)).
$$ 
By the dynamical programming principle, $V$ solves the following backward HJB equation:
$$
\p_t V+\Delta V+b\cdot\nabla_x V+H(\nabla V)=0,\ V(T,x)=\psi(x).
$$
Moreover, by the verification theorem, the optimal control $\gamma$ is then given by $\nabla V(t, X^*_t)$, where $X^*_t$ solves the following SDE:
$$
\dif X^*_t=(b(t,X^*_t)+\nabla V(t, X^*_t))\dif t+\sqrt{2}\dif W_t,\ X^*_0=x.
$$
Thus the study of singular HJBs provides us a possibility to study 
the singular stochastic control problem. Here the singularity means that $b$ could be a distribution.

\medskip

Another main motivation of studying HJB \eqref{eq:1} is to solve the following Kardar-Parisi-Zhang (KPZ) equation on the real line:
\begin{equation}\label{1:kpz}
\sL h=``(\p_x h)^{2}\mbox{''}+\xi,\quad h(0)=h_0, 
\end{equation}
where  $\xi$ is a Gaussian space-time white noise on $\mathbb{R}^+\times \R$. The KPZ equation was introduced in \cite{KPZ86} as a model for the growth of interface represented by a height function $h$. In \cite{KPZ86} the authors predicted that under a famous $1-2-3$ scaling the height function must converge to a scale invariant random field which is called KPZ fixed point (see \cite{C12,Qua12,MQR17} and reference therein). Such conjecture is called the strong KPZ universality conjecture. A weaker form of universality which is now called the weak universality conjecture states that the KPZ equation is itself a universal description of the fluctuations of weakly asymmetric growth models (see e.g. \cite{BG97,HQ15,HX18} and reference therein).

\medskip
The main difficulty in solving \eqref{1:kpz}  comes from the singularity of space-time white noise and the nonlinearity, 
which makes $\p_x h$ is not a function and $(\p_x h)^2$ cannot be understood in the classical sense. This problem can be avoided by using the Cole-Hopf transform (see \cite{KPZ86, BCJL94, BG97}), i.e. $w:=\e^h$ formally solves the stochastic heat equation
\begin{align}\label{eq:li}\sL w=w\xi,\quad w(0)=\e^{h_0},\end{align}
which can be understood by It\^o's integration (\cite{Wal86}). 
In \cite{BCJL94, BG97} the solutions to \eqref{1:kpz} are defined by $\log w$ with $w$ being 
the solutions to \eqref{eq:li}. But it remained  unclear whether the Cole-Hopf solution solves the original KPZ equation.

\medskip 

The first rigorous result on solving the original KPZ equation \eqref{1:kpz} 
on the torus is due to Hairer by using rough path theory \cite{Hai13}. Later Hairer  introduced the theory of regularity structures in \cite{Hai14} and Gubinelli, Imkeller and Perkowski proposed paracontrolled distribution method in \cite{GIP15, GP17}, which makes it possible to study a large class of PDEs driven by singular noise. The key ideas of these theories are 
to use the structure of solutions to give a meaning to the not classically well-defined terms. These terms are well-defined with the help of renormalization for the higher order terms of noise. 
More precisely, $(\p_xh)^2$ can be formally understood as a subtraction of an infinite correction term: $(\p_xh)^2-\infty$.  
By a renormalization and decomposition procedure, one can reduce KPZ equation \eqref{1:kpz} 
to an HJB equation \eqref{eq:1} together with other linear equations (see Section \ref{sec:kpz} for more details).

\medskip

Most of the well-known works in the field of singular SPDEs are considered in the finite volume case. Since the main interest for the KPZ equation comes from large scale behavior, it is natural to consider the KPZ equation on the real line. In general the space-time white noise on the infinite volume stays in weighted Besov spaces, and so does the solution. 
This prevents to apply the fixed point argument to construct local solutions. The first work to overcome this difficulty was achieved by Hairer and Labb\'e \cite{HL15, HL18} for the linear rough heat equation by using the exponential weight. For non-linear equation a priori estimate is a natural tool and has been used successfully in the dynamical $\Phi^4_d$ model by Mourrat and Weber \cite{MW17,MW17a} and Gubinelli and Hofmanov\'a \cite{GH18}, which rely on the damping term $-\phi^3$. In \cite{PR18} a priori estimate and a paracontrolled solution to KPZ equation have been obtained for \eqref{1:kpz} by using Cole-Hopf's transform.  Using the probabilistic
notion of energy solutions \cite{GJ14, GJ13,GP18} or studying the assoicated generator and Kolmogorov equation \cite{GP18a} it is possible to give a meaning of the
KPZ equation on $\mR$, but this essentially depends on the invariant measure and is restricted to the initial data, which is absolutely continuous w.r.t. the stationary measure.
In \cite{CWZZ18} martingale solutions have been constructed for geometric stochastic heat equations on infinite volume by using Dirichlet form approach, 
which also relies on the integration by parts formula for the invariant measure.

\medskip
For \eqref{eq:1} we have similar difficulty as \eqref{1:kpz}. 
Since $ b,f \in L_T^\infty \bC^{- \alpha} (\rho_{\kappa})$ and $\alpha>1/2$, the best regularity space for $u$ is $L^\infty_T\bC^{2-\alpha}$ 
by Schauder's estimate. As a result, the transport term $b\cdot\nabla u$
is not well-defined in the classical sense. We need to use regularity structure theory or paracontrolled distribution method to give a meaning to  equation \eqref{eq:1}. The main aim of this paper is to use PDE arguments and paracontrolled distribution method to obtain the global well-posedness of \eqref{eq:1}. 
Notice that for general $H$, we cannot use Cole-Hopf's transform to transform \eqref{eq:1} into a linear equation. 

\subsection{Main results} 

Our goal in the study of the present problem is to make some progress 
in establishing global bounds for singular SPDE's in which  strong damping is not at hand.
As mentioned above, to define $b\cdot\nabla u$ we need to do renormalizations by probabilistic calculations. It is not the main aim of this paper to discuss the renormalization terms as this has been done extensively in the references cited above. For the main result, we suppose that  
the definition of $b\circ \nabla \sI b\in L_T^\infty\bC^{1-2\alpha}(\rho_{2\kappa})$ and 
$b\circ \nabla \sI f\in L_T^\infty\bC^{1-2\alpha}(\rho_{2\kappa})$ are given with $\sI=\sL^{-1}$, i.e. $(b,f)\in \mathbb{B}^\alpha_T(\rho_\kappa)$ (see Section \ref{ssec:para} and Section \ref{sec:2.5}), which in general could be realized by probabilistic calculation (see Section \ref{sec:kpz} for examples). Under this assumption we are mainly concerned with the analysis of the deterministic system in the following.

\medskip


The following result is a special case of main Theorem \ref{existence}.

\bt\label{main} 
Let $\alpha\in(\frac12,\frac23)$ and $\kappa$ be small enough so that $\delta=2(\frac9{2-3\alpha}+1)\kappa<1$, $\bar\alpha=\alpha+\kappa^{1/4}\in(\frac12,\frac23)$.
Suppose that for some $\zeta\in[0,2)$ and $c>0$,
\begin{equation*}
|\p_QH(Q)|\leq c(1+|Q|)\mbox{ and }|H(Q)|\leq c(|Q|^\zeta+1)\ \mbox{for $d\geq 2$},
\end{equation*}
and  
\begin{align*}
\left\{
\begin{aligned}
&\tfrac{1-\alpha}{2}>\eta>\tfrac{2\zeta\delta}{2-\zeta}\vee[2\kappa^{1/4}+2\delta],\ &d\geq2;\\
&\tfrac{1-\alpha}{2}>\eta>2\left[\tfrac{2(3-2\bar\alpha)\delta}{1-\bar\alpha}\vee(\kappa^{1/4}+2\delta)\right],&d=1.
\end{aligned}
\right.
\end{align*}
For any renormailzied pair $(b,f)\in\mB^\alpha_T({\rho_\kappa})$ and 
initial value $u_0\in \bC^\gamma(\rho_{(\gamma-1-\alpha)\delta})$ with $1+\alpha<\gamma<2$,
there exists a unique paracontrolled solution $u\in\mathbb{S}_T^{2-\alpha-\kappa^{1/4}}(\rho_\eta)$ to HJB equation \eqref{eq:1}.
\et

The definition of paracontrolled solutions are given in Definition \ref{def:para1} and Section \ref{Zvonkin}.

As the main application, we obtain well-posedness of \eqref{1:kpz}. The regularity of the space-time white noise $\xi$ is more rough than the coefficient $f$ given in \eqref{eq:1}. To apply Theorem \ref{main} we need to introduce some random distributions and use  Schauder estimate to transform \eqref{1:kpz} to \eqref{eq:1}.  This is the usual way being done for KPZ equation (cf. \cite{Hai13,GP17,PR18}). We use $Y$ to denote the stationary solution to the linear equation
$(\p_t-\Delta)Y=\xi,$ and  $\tone{Y}, \ttwo{Y}$ are random distributions defined in Section \ref{sec:kpz}. 

\bt\label{th:kpz} Let $\kappa>0$ be small enough, $\delta:=40\kappa<1$. 
For $h_0=Y(0)+\widetilde h(0) $ with $\widetilde h(0)\in \bC^{\frac32+2\eps}(\rho_{\eps\delta})$ for $\eps>0$, there exists a unique paracontrolled solution to \eqref{1:kpz} in the sense that $h-Y-\tone{Y}-\ttwo{Y}:=\widetilde h\in \mS_T^{\frac32-2\kappa^{1/4}}(\rho_\eta)$ is a unique paracontrolled solution to \eqref{e:h1} for
$2[(100\kappa)\vee ({\kappa^{1/4}}+80\kappa)]<\eta<\frac14.$

\et

This result improves the weight for the solution space obtained \cite{PR18} and is proved in Theorem \ref{th:k}.


\subsection{Sketch of proof and the structure of the paper}
In Section \ref{sec:2} we first introduce the basic notations and the spaces used throughout the paper.  The regularization effect of heat semigroups and  paracontrolled calculus are recalled in Section \ref{sec:2.2} and Section \ref{ssec:para}, respectively. The conditions for the coefficient $(b,f)$ are discussed in Section \ref{sec:2.5}.

\medskip

The bulk of our argument is contained in Sections \ref{linear}-\ref{Zvonkin} and we now proceed to explain the strategy. 
We separate \eqref{eq:1} as the following two equations:
\begin{equation} \label{eq:2}\left( \partial_t - \Delta \right) w = b \cdot \nabla w  + f, \qquad w(0) = w_0\end{equation}
\begin{equation} \label{eq:3}\left( \partial_t - \Delta \right) u = b \cdot \nabla u  + H(\nabla w+\nabla u), \qquad u(0) = u_0.\end{equation}
In Section \ref{linear} we first establish Schauder estimate for \eqref{eq:2} with sublinear weights (see Theorem \ref{Th33}). 
This solves the conjecture proposed in \cite[Remark 1.1]{PR18}. The difficulty to study \eqref{eq:2} lies in the loss of weight for $b$ part on the right hand side.  It is possible to use  the technique in \cite{HL18} to solve the problem. However, by the technique in \cite{HL18} the solution will stay in the Besov space with  exponential weight, which seems not easy to be used to obtain a uniform $L^\infty(\rho_\delta)$ estimate for solution to \eqref{eq:3}. The key idea is to use a new characterization of the weighted H\"older space (see Lemma \ref{cha}) to localize the problem with coefficient in unweighted Besov spaces.  To this end, we first establish the Schauder estimate with the coefficient in unweighted Besov space in Section \ref{sub:Schauder}. Here we want to emphasize that
the estimate depends polynomially on the norm of the coefficient compared to the exponential dependence by the usual Gronwall type argument. To obtain this, we add a new damping term $\lambda w$ to \eqref{eq:2}, for which a uniform estimate is easy to be established by choosing $\lambda$ large enough. Then by a classical maximum principle, we obtain the Schauder estimate for the solutions to  \eqref{eq:2} depending polynomially on the coefficient.  In Section \ref{sec:3.3} we establish global well-posedness of equation \eqref{eq:2} and   a uniform estimate of solution to \eqref{eq:2} in Besov space with sublinear weight.
\begin{figure}
	\begin{tikzpicture}
	\node (lnd) [block] {Equation \eqref{eq:1}\\containing $b, f$ and nonlinear term $H$};
	\node (13) [block, below of=lnd, xshift=2cm, yshift=-1.5cm] {Equation \eqref{eq:3}\\without $f$};
	\node (12) [block, left of=13, xshift=-4cm, inner sep = 0pt] {Equation \eqref{eq:2}\\without nonlinear term};
	
	\node (14) [block, below of=13, yshift=-1.5cm]  {Section \ref{s:HJB}\\
		Well-posedness of \eqref{eq:5}\\every term is a function};
	\node (15) [block, below of=12, yshift=-1.5cm]  {Section \ref{linear}\\
		Well-posedness of \eqref{eq:2}\\
		Solution: sublinear growth};
	
	\node (16) [block, below of=lnd, yshift=-6cm]  {Section \ref{Zvonkin}\\Well-posedness of \eqref{eq:1}};

	\draw[->,line width=0.4mm] (lnd) --node[anchor=east] {Separation} (13);
	\draw[->,line width=0.4mm] (lnd) -- (12);
	
	\draw[->,line width=0.4mm] (13) --node[anchor=west] {Zvonkin transform} (14);
	\draw[->,line width=0.4mm] (12) --node[anchor=west] {localization} (15);
	
	\draw[->,line width=0.4mm] (14) -- (16);
	\draw[->,line width=0.4mm] (15) -- (16);
	\end{tikzpicture}	
\end{figure} 

\medskip

We then study \eqref{eq:3} in Sections \ref{s:HJB} and \ref{Zvonkin}. Compared to \eqref{eq:1} the distribution-valued 
$f$ has been changed to  function-valued. But we still have a singular transport term $b\cdot \nabla u$ with distribution-valued $b$ in \eqref{eq:3}.
 In the classical PDE theory (see \cite{LSU68}) we may  use De-Giorgi's method    to   obtain better regularity. However, the singularity of $b$ makes it not easy.
Instead, we use Zvonkin's transform to transform \eqref{eq:3} to the following general  HJB equation (see Section \ref{Zvonkin})
\begin{align}\label{eq:5}
\p_t v=\tr(a\cdot\nabla^2 v)+B\cdot\nabla v+\widetilde{H}(v,\nabla v),\ v(0)=\varphi,
\end{align}
where $a\in L^\infty_T\bC^{1-\alpha} $ is symmetric, uniformly elliptic,
$B\in \mL^\infty_T(\rho_{\delta_1})$ for some $\delta_1\in(0,1]$.
All the coefficients of \eqref{eq:5} are function-valued with the cost that \eqref{eq:5} is given as a non-divergence form.  To be more precise, we use \cite{GH18} to decompose $b$ into a less regular term $b_>$ in the unweighted Besov space and a function-valued term $b_\leq$. Then we use Zvonkin's transform to kill $b_>$. The idea comes from Zvonkin's transform for SDEs, but our Zvonkin's transform is different from the normal one and it is the first time to be used for dealing with nonlinear PDE \eqref{eq:3}. We  emphasize that we need to construct a $C^1$-diffeomorphism by solving a linear equation similar as \eqref{eq:2} with $b_>$ as the coefficient.


\medskip

Section \ref{s:HJB} is devoted to the global well-posedness of equation \eqref{eq:5} (see Theorem \ref{Th42}). We first establish a maximum principle in Section \ref{sec:4.1} by Feymann-Kac formula. For the subcritical case, the global estimate follows from $L^\infty(\rho_\delta)$-estimate and $L^p$ theory of PDEs. For the critical case, the proof is more involved.  We can only treat $d=1$ case. In this case by taking spatial derivative on both sides, we obtain a divergence PDE. Then  the  $L^\infty(\rho_\delta)$-bound and energy estimate yield the $\mH^{2,p}_T(\rho_\eta)$-estimate of the solution to equation \eqref{eq:5}. By using this and Zvonkin's transform we finally establish global estimate for solutions to \eqref{eq:3} and well-posedness of \eqref{eq:1} in Section \ref{Zvonkin}. 

Now we use the above picture to see our steps to solve the problem.

 Section \ref{sec:kpz} is devoted to the application to the KPZ equation and the proof of Theorem \ref{th:kpz}. Finally in Appendix  \ref{sec:7.1} we give the uniqueness of solutions to \eqref{eq:1} based on the exponential weight approach developed in \cite{HL18}. Appendix \ref{sec:7.2} is then devoted to an exponential moment estimate for SDEs used in Section \ref{s:HJB}. 

\subsection{Conventions and notations}
Throughout this paper, we use $C$ or $c$ with or without subscripts to denote an unrelated constant, whose value
may change in different places. We also use $:=$ as a way of definition. By $A\lesssim_C B$ and $A\asymp_C B$
or simply $A\lesssim B$ and $A\asymp B$, we mean that for some constant $C\geq 1$,
$$
A\leq C B,\ \ C^{-1} B\leq A\leq CB.
$$
For convenience, we list some commonly used notations and definitions below.
$$
\begin{tabular}{c|c}\toprule
$\sC^\alpha(\rho)$: weighted H\"older space (Def. \ref{Def23}) & $\sC^\alpha:=\sC^\alpha(1)$ \\ \midrule
$\bB^\alpha_{p,q}(\rho)$: weighted Besov space (Def. \ref{Def25}) & $\bB^\alpha_{p,q}:=\bB^\alpha_{p,q}(1)$\\ \midrule
$\bC^\alpha(\rho)$: weighted H\"older-Zygmund space (Def. \ref{Def25})&  $\bC^\alpha:=\bC^\alpha(1)$ \\ \midrule
$\mS^\alpha_T(\rho)$: Paracontrolled solution space \eqref{SS0}  &$\mS^\alpha_T:=\mS^\alpha_T(1)$ \\ \midrule
$\mB^\alpha_T(\rho)$: Space of renormalized pair (Def. \ref{Def216}) & $\mB^\alpha_T:=\mB^\alpha_T(1)$\\ \midrule
$f\prec g, f\succ g, f\circ g$: Paraproduct (Sec. \ref{ssec:para})  &  $f\succcurlyeq g:=f\succ g+f\circ g$ \\ \midrule
$f\Prec g$: Modified paraproduct (Sec. \ref{ssec:para})  &  $\sL_\lambda:=\p_t-\Delta+\lambda$ \\ \midrule
${\rm com}(f,g,h):=(f\prec g)\circ h-f(g\circ h)$ (Sec. \ref{ssec:para})  & $\sI_\lambda:=(\p_t-\Delta+\lambda)^{-1}$\\ \midrule
$\sV_> f$,\ $\sV_\leq f$: Localization operator (Sec. \ref{ssec:para})  & $\sL:=\sL_0$,\ $\sI:=\sI_0$\\ \midrule
$P_tf(x):=(4\pi t)^{-d/2}\int_{\mR^d}f(y)\e^{-|x-y|^2/(4t)}\dif y$ & $B_r:=\{x:|x|\leq r\}$\\ \midrule
$\sI_{s}^tf(x):=\int_s^tP_{t-r}f(r,x)\dif r$  & $\<x\>:=(1+|x|^2)^{1/2}$\\\midrule 
Commutator: $[\sA_1,\sA_2]f:=\sA_1(\sA_2 f)-\sA_2(\sA_1f)$ & $\mN_0:=\mN\cup\{0\}$ \\\midrule 
\bottomrule
\end{tabular}
$$

\section{Preliminaries}\label{sec:2}

\subsection{Weighted Besov spaces}\label{sec:2.1}
We first recall the following definition about the admissible weight
introduced in \cite{Tri06}.
\bd
A $C^\infty$-smooth function $\rho:\mR^d\to(0,\infty)$ is called an admissible weight if
for each $j\in\mN$, there is a constant $C_j>0$ such that
$$
|\nabla^j\rho(x)|\leq C_j\rho(x),\ \ \forall x\in\mR^d,
$$
and for some $C,\beta>0$,
$$
\rho(x)\leq C\rho(y)(1+|x-y|)^\beta,\ \ \forall x,y\in\mR^d.
$$
The set of all the admissible weights is denoted by $\sW$.
\ed
\bx
Let $\rho_\delta(x)=\<x\>^{-\delta}=(1+|x|^2)^{-\delta/2}$, where $\delta\in\mR$. It is easy to see that $\rho_\delta\in\sW$.
Such a weight is called polynomial weight.
\ex
 We introduce the following weighted H\"older space for later use.
\bd (Weighted H\"older spaces)\label{Def23}
Let $\rho\in\sW$ and $k\in\mN_0$. For $\alpha\in[0,1)$, we define the weighted H\"older space $\sC^{k+\alpha}(\rho)$ by the norm
\begin{align*}
\|f\|_{\sC^{k+\alpha}(\rho)}:=\sum_{j=0}^k\|\nabla^j(\rho f)\|_{L^\infty}+\sup_{x\not=y}\frac{|\nabla^k(\rho f)(x)-\nabla^k(\rho f)(y)|}{|x-y|^\alpha}<\infty.
\end{align*}
\ed
\br
\rm
By the properties of admissible weights and elementary calculations, it is easy to see that  for some $C=C(d,\rho)\geq 1$,
\begin{align}
\|f\|_{\sC^{k+\alpha}(\rho)}&\asymp_C\sum_{j=0}^k\|\rho\nabla^jf \|_{L^\infty}
+\sup_{|x-y|\leq 1}\frac{|(\rho\nabla^k f)(x)-(\rho\nabla^k f)(y)|}{|x-y|^\alpha}\no
\\&\asymp_C\sum_{j=0}^k\|\rho\nabla^jf \|_{L^\infty}+\sup_{|x-y|\leq 1}\frac{\rho(x)|\nabla^k f(x)-\nabla^k f(y)|}{|x-y|^\alpha}\label{eq}.
\end{align}
\er

Let $\mathcal{S}(\R^{d})$ be the space of Schwartz functions on $\R^{d}$ and $\mathcal{S}'(\R^{d})$ the space of tempered distributions, 
which is the dual space of $\mathcal{S}(\R^{d})$. 
The Fourier transform of $f\in \mathcal{S}'(\R^{d})$ is defined through
$$
\widehat f(z):=(2\pi)^{-d/2}\int_{\R^{d}}f(x)\e^{-i z\cdot x}\dif x.
$$
For $j\geq -1$, let $\Delta_j$ be the usual block operator used in the Littlewood-Paley decomposition so that
for any $f\in\cS'(\mR^d)$ ({\cite{BCD11}}), 
$$
\Delta_j f\in\cS,\ \ {\rm supp}(\widehat{\Delta_j f})\subset B_{2^{j+2}}\setminus B_{2^{j-1}},\ j\in\mN_0,
$$
and
$$
{\rm supp}(\widehat{\Delta_{-1} f})\subset B_1,\ \ f=\sum_{j\geq -1}\Delta_j f.
$$
We also introduce the following weighted Besov spaces (cf. \cite{Tri06}):
\bd\label{Def25}
Let $\rho\in\sW$ and $p,q\in[1,\infty]$ and $\alpha\in\mR$. The weighted Besov space $\bB^\alpha_{p,q}(\rho)$ is defined by
$$
\bB^\alpha_{p,q}(\rho):=\left\{f\in\cS'(\mR^d): 
\|f\|_{\bB^\alpha_{p,q}(\rho)}:=\left(\sum_j 2^{\alpha jq}\|\Delta_j f\|_{L^p(\rho)}^q\right)^{1/q}<\infty\right\},
$$
where 
$$
\|f\|_{L^p(\rho)}:=\|\rho f\|_p:=\left(\int_{\mR^d}|\rho(x)f(x)|^p\dif x\right)^{1/p}.
$$
The weighted H\"older-Zygmund space is defined by
$$
\bC^\alpha(\rho):=\bB^\alpha_{\infty,\infty}(\rho).
$$
\ed

\br\rm
Let $\rho\in\sW$. For any $0<\beta\notin\mN$ and $\alpha\in\mR$, $p,q\in[1,\infty]$, it is well known that 
(see \cite[Theorem 6.5, Theorem 6.9]{Tri06}, \cite[page99]{BCD11})
\begin{align}\label{DM9}
\|f\|_{\bC^\beta(\rho)}&\asymp \|f\|_{\sC^\beta(\rho)},\ \|f\|_{\bB^\alpha_{p,q}(\rho)}\asymp\|f\rho\|_{\bB^\alpha_{p,q}}.
\end{align}

\er

For $T>0$, $\alpha\in\mR$ and an admissible weight $\rho\in\sW$, let $L^\infty_T\bC^{\alpha}(\rho)$ be the space of space-time distributions with finite norm
$$
\|f\|_{L^\infty_T\bC^{\alpha}(\rho)}:=\sup_{0\leq t\leq T} \| f(t)\|_{\bC^{\alpha}(\rho)}<\infty.
$$
For $\alpha\in(0,1)$ we denote by $C_T^{\alpha}L^\infty(\rho)$  the space of $\alpha$-H\"older continuous
mappings $f: [0,T]\to  L^\infty(\rho) $ with finite norm
$$
\|f\|_{C_T^{\alpha}L^\infty(\rho)}:=\sup_{0\leq t\leq T} \|f(t)\|_{L^\infty(\rho)}+\sup_{0\leq s\neq t\leq T} \frac{\|f(t)-f(s)\|_{L^\infty(\rho)}}{|t-s|^{\alpha}}.
$$
The following space will be used frequently: for $\alpha\in(0,2)$,
\begin{align}\label{SS0}
\mS^\alpha_T(\rho):=\Big\{f: \|f\|_{\mS^{\alpha}_T(\rho)}:=&\|f\|_{L^\infty_T\bC^\alpha(\rho)}+\|f\|_{C_T^{\alpha/2}L^\infty(\rho)}<\infty\Big\}.
\end{align}
We have the following simple fact  (see \cite[Lemma 2.11]{PR18}): for $\alpha\in (0,1)$, 
\begin{align}\label{*}
\|\nabla f\|_{\mS^\alpha_T(\rho)}\lesssim \|f\|_{\mS^{\alpha+1}_T(\rho)}.
\end{align}
Moreover, by interpolation it is easy to see that for $0<\kappa<\alpha$,
\begin{align*}
\|f\|_{C_T^{\kappa/2}\bC^{\alpha-\kappa}(\rho)}\lesssim \|f\|_{\mS_T^\alpha(\rho)}.
\end{align*}
For $p\in[1,\infty]$, $k\in\mN_0$ and $T>0$, we also need the following Sobolev space:
$$
\mH^{k,p}_T:=\Big\{f: \|f\|_{\mH^{k,p}_T}:=\|f\|_{\mL^p_T}+\|\nabla^k f\|_{\mL^p_T}<\infty\Big\},
$$
where, with the usual modification when $p=\infty$,
$$
\|f\|_{\mL^p_T}:=\left(\int^T_0\!\!\!\int_{\mR^d} |f(t,x)|^p\dif x\dif t\right)^{\frac 1p}.
$$ 
For an admissible weight $\rho$, we also introduce the weighted Sobolev space
$$
\mH^{k,p}_T(\rho):=\Big\{f: \|f\|_{\mH^{k,p}_T(\rho)}:=\|f\rho\|_{\mH^{k,p}_T}<\infty\Big\},
$$
and local space $\mH^{k,p}_{\mathrm{loc}}$:
$$
\mH^{k,p}_{\mathrm{loc}}:=\Big\{f: f\chi_R\in\mH^{k,p}_{T},\ \ \forall T, R>0\Big\},
$$
where $\chi_R$ is the usual cutoff function. 

The following interpolation inequality will be used frequently, which are easy consequence of H\"older's inequality and the corresponding definition.
 (see \cite[Lemma A.3]{GH18a} for a discrete version).
\bl\label{Le32}
Let $\rho\in\sW$ and $\theta\in[0,1]$. Let $\alpha,\alpha_1,\alpha_2\in\mR$ and
$\delta,\delta_1,\delta_2\in \R$ satisfy
$$
\delta=\theta\delta_1+(1-\theta)\delta_2,\ \alpha=\theta \alpha_1+(1-\theta)\alpha_2,
$$
and $p,q,p_1,q_1,p_2,q_2\in[1,\infty]$ satisfy
$$
\tfrac{1}{p}=\tfrac{\theta}{p_1}+\tfrac{1-\theta}{p_2},\ \ \tfrac{1}{q}=\tfrac{\theta}{q_1}+\tfrac{1-\theta}{q_2}.
$$
Then we have
\begin{align}\label{DQ1}
\|f\|_{\bB^\alpha_{p,q}(\rho^\delta)}\leq \|f\|_{\bB^{\alpha_1}_{p_1,q_1}(\rho^{\delta_1})}^\theta\|f\|_{\bB^{\alpha_2}_{p_2,q_2}(\rho^{\delta_2})}^{1-\theta}.
\end{align}
Moreover, for any $0<\alpha<\beta<2$ with $\theta=\alpha/\beta$, we also have
\begin{align}\label{AM1}
\|f\|_{\mS^\alpha_T(\rho^\delta)}\lesssim \|f\|_{\mS^\beta_T(\rho^{\delta_1})}^{\theta}
\|f\|_{\mL^\infty_T(\rho^{\delta_2})}^{1-\theta}.
\end{align}
\el

\subsection{Estimates of Gaussian heat semigroups}\label{sec:2.2}
For $t>0$, let $P_t$ be the Gaussian heat semigroup defined by
$$
P_t f(x):=(4\pi t)^{-d/2}\int_{\mR^d}\e^{-|x-y|^2/(4t)}f(y)\dif y.
$$
Let $\rho$ be an admissible weight. It is well know that there is a constant $C=C(\rho,d)>0$ such that (see \cite[Lemma 2.10]{MW17})
	\begin{align}\label{Es19}
	\|\Delta_j P_t f\|_{L^\infty(\rho)}\lesssim_C \e^{-2^{2j}t}\|\Delta_jf\|_{L^\infty(\rho)},\ j\geq -1, t\geq 0.
	\end{align}

We have the following estimates about the Gaussian heat semigroup.

\bl\label{lem:2.8}
Let $\rho$ be an admissible weight. 
\begin{enumerate}[(i)]
\item For any $\theta>0$ and $\alpha\in\mR$, there is a constant $C=C(\rho,d,\alpha,\theta)>0$ such that
\begin{align}\label{E1}
\|P_t f\|_{\bC^{\theta+\alpha}(\rho)}\lesssim_C t^{-\theta/2}\|f\|_{\bC^\alpha(\rho)},\ t>0.
\end{align}
\item For any $m\in\mN_0$ and $\theta<m$, there is a constant $C=C(\rho,d,m,\theta)>0$ such that
\begin{align}\label{E4}
\|\nabla^m P_t f\|_{L^\infty(\rho)}\lesssim_C t^{(\theta-m)/2}\|f\|_{\bC^\theta(\rho)},\ t>0.
\end{align}
\item For any $0<\theta<2$, there  is a constant $C=C(\rho,d,\theta)>0$ such that
\begin{align}\label{E2}
\|P_t f-f\|_{L^\infty(\rho)}\lesssim_C t^{\theta/2}\|f\|_{\bC^{\theta}(\rho)},\ t>0.
\end{align}
\end{enumerate}
\el
\begin{proof} 
(i) By the definition and \eqref{Es19}, we have
\begin{align*}
\|P_t f\|_{\bC^{\theta+\alpha}(\rho)}
&=\sup_j 2^{(\theta+\alpha)j}\|\Delta_j P_t f\|_{L^\infty(\rho)}
\lesssim\sup_j 2^{(\theta+\alpha)j}\e^{-2^{2j}t}\|\Delta_jf\|_{L^\infty(\rho)}
\\&\leq\sup_j 2^{\theta j}\e^{-2^{2j}t}\|f\|_{\bC^\alpha(\rho)}\lesssim t^{-\theta/2}\|f\|_{\bC^\alpha(\rho)}.
\end{align*}
(ii) For $m\in\mN_0$ and $\theta<m$, by \eqref{Es19} we have
\begin{align*}
\|\nabla^m P_t f\|_{L^\infty(\rho)}&\leq\sum_j\|\nabla^m \Delta_j P_t f\|_{L^\infty(\rho)}
\lesssim\sum_j 2^{mj}\e^{-2^{2j}t}\|\Delta_jf\|_{L^\infty(\rho)}
\no\\&\lesssim\sum_j(2^{mj}\e^{-2^{2j}t}2^{-\theta j})\|f\|_{\bC^{\theta}(\rho)}
\lesssim t^{(\theta-m)/2}\|f\|_{\bC^{\theta}(\rho)}.
\end{align*}
(iii) By \eqref{E4}, we have
	$$
	\|P_t f-f\|_{L^\infty(\rho)}=\left\|\int^t_0\Delta P_s f\dif s\right\|_{L^\infty(\rho)}
	\lesssim \int^t_0s^{-1+\theta/2}\|f\|_{\bC^{\theta}(\rho)}\dif s
	\lesssim t^{\theta/2}\|f\|_{\bC^{\theta}(\rho)}.
	$$
	The proof is complete.
\end{proof}

For given $\lambda\geq 0$ and $f\in L^\infty(\mR_+; L^\infty(\mR^d))$, we consider the following heat equation:
$$
\sL_\lambda u:=(\p_t-\Delta+\lambda)u=f,\ \ u(0)=0.
$$
The unique solution of this equation is given by
$$
u(t,x)=\int^t_0\e^{-\lambda(t-s)}P_{t-s}f(s,x)\dif s=:\sI_\lambda f(t,x).
$$
In other words, $\sI_\lambda$ is the inverse of $\sL_\lambda$. 

The following Schauder estimate is well known for $q=\infty$ and $\theta=2$ (see \cite{GH18}).
\bl\label{Le11}
(Schauder estimates in weighted space) Let $\rho\in\sW$ and
$$
\alpha\in(0,1],\ \ \theta\in(\alpha,2].
$$
For any $q\in[\frac{2}{2-\theta},\infty]$, there is a constant $C=C(\rho, d,\alpha,\theta,q)>0$
such that for all $\lambda, T\geq 0$ and $f\in L^q_T\bC^{-\alpha}(\rho)$,
\begin{align}\label{EG1}
\|\sI_\lambda f\|_{\mS^{\theta-\alpha}_T(\rho)}
\lesssim_C(\lambda\vee 1)^{\frac{\theta}{2}+\frac{1}{q}-1}\|f\|_{L^q_T\bC^{-\alpha}(\rho)}.
\end{align}
\el
\begin{proof}
	Let $q\in[\frac{2}{2-\theta},\infty]$ and $\frac{1}{p}+\frac{1}{q}=1$. For $t\in(0,T]$,  by \eqref{Es19} and H\"older's inequality, we have
	\begin{align*}
	2^{j(\theta-\alpha)}\|\Delta_j\sI_\lambda f(t)\|_{L^\infty(\rho)}&\lesssim 2^{j(\theta-\alpha)}\int^t_0\e^{-(\lambda+2^{2j})(t-s)}\|\Delta_jf(s)\|_{L^\infty(\rho)}\dif s\\
	&\lesssim 2^{j\theta}\left(\int^t_0\e^{-p(\lambda+2^{2j})(t-s)}\dif s\right)^{\frac{1}{p}}
	\left(\int^t_0\|f(s)\|^q_{\bC^{-\alpha}(\rho)}\dif s\right)^{\frac{1}{q}}\\
	&\lesssim2^{j\theta}\left(\int^t_0\e^{-p(\lambda+2^{2j}) s}\dif s\right)^{\frac{1}{p}}\|f\|_{L^q_T\bC^{-\alpha}(\rho)}
	\\&\lesssim 2^{j\theta}(2^{2j}+\lambda)^{-\frac{1}{p}}\|f\|_{L^q_T\bC^{-\alpha}(\rho)}
	\lesssim(\lambda\vee 1)^{\frac{\theta}{2}-\frac{1}{p}}\|f\|_{L^q_T\bC^{-\alpha}(\rho)},
	\end{align*}
	which implies by the definition of Besov space
	\begin{align}\label{EG01}
	\|\sI_\lambda f\|_{L^\infty_T\bC^{\theta-\alpha}(\rho)}\lesssim_C (\lambda\vee 1)^{\frac{\theta}{2}+\frac{1}{q}-1}\|f\|_{L^q_T\bC^{-\alpha}(\rho)}.
	\end{align}
	On the other hand, let $u=\sI_\lambda f$. For $0\leq t_1< t_2\leq T$, we have
	\begin{align*}
	u(t_2)-u(t_1)&=\int_0^{t_1}(\e^{-\lambda(t_2-s)}-\e^{-\lambda(t_1-s)}) P_{t_2-s}f(s)\dif s\\
	&\quad+(P_{t_2-t_1}-I)\sI_\lambda f(t_1)+\int_{t_1}^{t_2}\e^{-\lambda(t_2-s)}P_{t_2-s}f(s) \dif s\\
	&=:I_1+I_2+I_3.
	\end{align*}
	For $I_1$, by \eqref{E1} and H\"older's inequality, we have
	\begin{align*}
	\|I_1\|_{L^\infty(\rho)}&\leq |\e^{-\lambda(t_2-t_1)}-1|\int_0^{t_1}\e^{-\lambda(t_1-s)}\|P_{t_2-s}f(s)\|_{L^\infty(\rho)} ds\\
	&\leq \Big((\lambda(t_2-t_1))\wedge1\Big)\int_0^{t_1}\e^{-\lambda(t_1-s)}(t_2-s)^{-\frac{\alpha}{2}}\|f(s)\|_{\bC^{-\alpha}(\rho)} ds\\
	&\leq (\lambda(t_2-t_1))^{\frac{\theta}{2}}(t_2-t_1)^{-\frac{\alpha}{2}}\left(\int_0^{t_1}
	\e^{-\lambda(t_1-s)p}ds\right)^{1/p}\|f\|_{L^q_T\bC^{-\alpha}(\rho)}\\
	&\lesssim (t_2-t_1)^{\frac{\theta-\alpha}{2}}\lambda^{\frac{\theta}{2}-\frac{1}{p}}\|f\|_{L^q_T\bC^{-\alpha}(\rho)}.
	\end{align*}
	For $I_2$, by \eqref{E2} and \eqref{EG01} we have
	\begin{align*}
	\|I_2\|_{L^\infty(\rho)}&\leq (t_2-t_1)^{\frac{\theta-\alpha}{2}}\|\sI_\lambda f\|_{L^\infty_T\bC^{\theta-\alpha}(\rho)}\\
	&\lesssim (t_2-t_1)^{\frac{\theta-\alpha}{2}}(\lambda\vee1)^{\frac{\theta}{2}-\frac{1}{p}}\|f\|_{L^q_T\bC^{-\alpha}(\rho)}.
	\end{align*}
	For $I_3$, by \eqref{E4} and the change of variable, we have
	\begin{align*}
	\|I_3\|_{L^\infty(\rho)}&\lesssim\lambda^{\frac{\alpha}{2}-\frac{1}{p}}
	\left(\int_0^{\lambda(t_2-t_1)}\e^{-sp}s^{-\frac{\alpha p}{2}}\dif s\right)^{\frac{1}{p}}\|f\|_{L^q_T\bC^{-\alpha}(\rho)}\\
	&\lesssim (t_2-t_1)^{\frac{\theta-\alpha}{2}}\lambda^{-1+\frac{\theta}{2}+\frac{1}{q}}\|f\|_{L^q_T\bC^{-\alpha}(\rho)} ,
	\end{align*}
	where we used $\e^{-sp}s^{-\frac{\alpha p}{2}}\leq s^{\frac{(\theta-\alpha)p}{2}-1}$ for all $s>0$.
	Therefore,
	\begin{align}\label{EG11}
	\|\sI_\lambda f\|_{C^{(\theta-\alpha)/2}_TL^\infty(\rho)}\lesssim_C
	(\lambda\vee 1)^{\frac{\theta}{2}+\frac{1}{q}-1}\|f\|_{L^q_T\bC^{-\alpha}(\rho)},
	\end{align}
which together with \eqref{EG01} yields \eqref{EG1}.
\end{proof}

\subsection{Paracontrolled calculus}
\label{ssec:para}

In this subsection we recall some basic ingredients in the paracontrolled calculus developed by Bony \cite{Bon81} and \cite{GIP15}. 
The first important fact is that the product $fg$ of two distributions $f\in \bC^\alpha$ and $g\in \bC^\beta$ is well defined 
if and only if $\alpha+\beta>0$. In terms of Littlewood-Paley's block operator $\Delta_j$, 
the product $fg$ of two distributions $f$ and $g$ can be formally decomposed as
$$
fg=f\prec g+f\circ g+f\succ g,
$$
where 
$$
f\prec g=g\succ f:=\sum_{j\geq-1}\sum_{i<j-1}\Delta_if\Delta_jg, \quad f\circ g:=\sum_{|i-j|\leq1}\Delta_if\Delta_jg.
$$

In the following we collect some important estimates from \cite{GH18} about the paraproducts in weighted Besov spaces, 
that will be used below.

\begin{lemma}\label{lem:para} 
	Let $\rho_{1},\rho_{2}$ be two admissible weights. We have for any  $\beta\in\R$,
	\begin{equation}\label{GZ0}
	\|f\prec g\|_{\bC^\beta(\rho_{1}\rho_{2})}\lesssim\|f\|_{L^\infty(\rho_{1})}\|g\|_{\bC^{\beta}(\rho_{2})},
	\end{equation}
	and for any $\alpha<0$ and $\beta\in\mR$,
	\begin{equation}\label{GZ1}
	\|f\prec g\|_{\bC^{\alpha+\beta}(\rho_{1}\rho_{2})}\lesssim\|f\|_{\bC^{\alpha}(\rho_{1})}\|g\|_{\bC^{\beta}(\rho_{2})}.
	\end{equation}
	Moreover, for any $\alpha,\beta\in\mR$ with  $\alpha+\beta>0$,
	\begin{equation}\label{GZ2}
	\|f\circ g\|_{\bC^{\alpha+\beta}(\rho_{1}\rho_{2})}\lesssim\|f\|_{\bC^{\alpha}(\rho_{1})}\|g\|_{\bC^{\beta}(\rho_{2})}.
	\end{equation}
	In particular, if $\alpha+\beta>0$, then
	\begin{equation}\label{GZ3}
	\|f g\|_{\bC^{\alpha\wedge\beta}(\rho_{1}\rho_{2})}\lesssim\|f\|_{\bC^{\alpha}(\rho_{1})}\|g\|_{\bC^{\beta}(\rho_{2})}.
	\end{equation}
\end{lemma}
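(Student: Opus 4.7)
The plan is to reduce every estimate to the corresponding classical unweighted paraproduct inequality via the elementary pointwise identity
$$
\rho_{1}(x)\rho_{2}(x)\,|u(x)v(x)|=\bigl(\rho_{1}(x)|u(x)|\bigr)\bigl(\rho_{2}(x)|v(x)|\bigr),
$$
which yields $\|uv\|_{L^\infty(\rho_{1}\rho_{2})}\leq\|u\|_{L^\infty(\rho_{1})}\|v\|_{L^\infty(\rho_{2})}$. Since $\|h\|_{\bC^\gamma(\rho)}=\sup_{k\geq -1}2^{\gamma k}\|\Delta_k h\|_{L^\infty(\rho)}$, it suffices to estimate $\|\Delta_k(\text{paraproduct})\|_{L^\infty(\rho_{1}\rho_{2})}$ uniformly in $k$ with the correct $2^{-\gamma k}$ decay.

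For the paraproduct terms \eqref{GZ0}--\eqref{GZ1}, I would use that $S_{j-1}f\cdot\Delta_j g$ is spectrally supported in an annulus of size $2^j$, so that
$$
\Delta_k(f\prec g)=\sum_{j:|j-k|\leq N}\Delta_k\bigl(S_{j-1}f\cdot\Delta_j g\bigr)
$$
for a fixed $N$. The weight-factorization bound then gives
$\|\Delta_k(f\prec g)\|_{L^\infty(\rho_{1}\rho_{2})}\lesssim\sum_{|j-k|\leq N}\|S_{j-1}f\|_{L^\infty(\rho_{1})}\|\Delta_j g\|_{L^\infty(\rho_{2})}.$
For \eqref{GZ0} I plug in $\|S_{j-1}f\|_{L^\infty(\rho_{1})}\lesssim\|f\|_{L^\infty(\rho_{1})}$ and $\|\Delta_j g\|_{L^\infty(\rho_{2})}\lesssim 2^{-j\beta}\|g\|_{\bC^\beta(\rho_{2})}$; for \eqref{GZ1} I use the sharper bound $\|S_{j-1}f\|_{L^\infty(\rho_{1})}\lesssim 2^{-j\alpha}\|f\|_{\bC^\alpha(\rho_{1})}$, which is valid precisely because $\alpha<0$ makes the geometric sum $\sum_{i\leq j-2}2^{-i\alpha}$ convergent.

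For the resonant term \eqref{GZ2}, the product $\Delta_j f\cdot\Delta_i g$ with $|i-j|\leq 1$ has frequency support in a \emph{ball} of radius $\sim 2^j$, so $\Delta_k(f\circ g)$ receives contributions only from $j\gtrsim k$. This yields
$$
\|\Delta_k(f\circ g)\|_{L^\infty(\rho_{1}\rho_{2})}\lesssim\sum_{j\gtrsim k}\sum_{|i-j|\leq 1}\|\Delta_j f\|_{L^\infty(\rho_{1})}\|\Delta_i g\|_{L^\infty(\rho_{2})}\lesssim\sum_{j\gtrsim k}2^{-j(\alpha+\beta)}\|f\|_{\bC^\alpha(\rho_{1})}\|g\|_{\bC^\beta(\rho_{2})},
$$
and the assumption $\alpha+\beta>0$ makes this geometric series converge to $2^{-k(\alpha+\beta)}$, as required. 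Finally \eqref{GZ3} follows by writing $fg=f\prec g+f\circ g+f\succ g$, applying \eqref{GZ2} to the middle term, and applying \eqref{GZ0} or \eqref{GZ1} to each of the outer two terms depending on the sign of the corresponding regularity; in every case the resulting regularity exponent is at least $\alpha\wedge\beta$ because $\alpha+\beta>0$.

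The main technical obstacle is justifying the uniform-in-$j$ bound $\|S_{j-1}f\|_{L^\infty(\rho_{1})}\lesssim\|f\|_{L^\infty(\rho_{1})}$ (and its $\bC^\alpha$-variant) when the weight sits \emph{inside} the norm; this step is where the admissibility of $\rho_1$ really enters. Writing $S_{j-1}f=K_j*f$ with $K_j$ a Schwartz function concentrated at scale $2^{-j}$, the polynomial-growth comparison $\rho_{1}(x)\leq C\rho_{1}(y)(1+|x-y|)^\beta$ in Definition~2.1 lets me absorb the weight inside the convolution uniformly in $j\geq 0$, which is exactly the ingredient needed to port the standard unweighted Bony estimates to the weighted setting. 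Everything else is the standard Bony paraproduct machinery as in \cite{BCD11, GH18}.
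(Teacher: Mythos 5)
Your argument is correct and is essentially the standard proof the paper relies on: the paper itself gives no independent argument but cites \cite[Lemma~2.14]{GH18}, and the proof there runs exactly along the lines you describe, namely reducing each estimate via the weight factorization $\rho_1\rho_2|uv|=(\rho_1|u|)(\rho_2|v|)$ together with the Bony spectral-support facts for $\Delta_k(f\prec g)$ and $\Delta_k(f\circ g)$, and invoking admissibility of the weight to carry the convolution bounds for $S_{j-1}$ and $\Delta_k$ on $L^\infty(\rho)$ uniformly in the block index. Your identification of the one genuinely weight-specific ingredient — the uniform-in-$j$ bound $\|S_{j-1}f\|_{L^\infty(\rho_1)}\lesssim\|f\|_{L^\infty(\rho_1)}$, coming from $\rho(x)\leq C\rho(y)(1+|x-y|)^\beta$ being absorbed into the scale-$2^{-j}$ convolution kernel — is exactly right and is the point where admissibility is used. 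One small remark for completeness: in \eqref{GZ3}, when $\alpha\wedge\beta=0$, neither \eqref{GZ0} (which needs $L^\infty$) nor \eqref{GZ1} (which needs strictly negative regularity) applies directly to the low-high paraproduct, so one should first embed $\bC^0(\rho_1)\hookrightarrow\bC^{-\eps}(\rho_1)$ for small $\eps>0$ and then apply \eqref{GZ1}; since $\alpha+\beta>0$ one still lands above $\alpha\wedge\beta$, so this costs nothing.
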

\begin{proof}
See \cite[Lemma 2.14]{GH18}.
\end{proof}

\begin{lemma}\label{lem:com2}
	Let $\rho_{1}, \rho_{2}, \rho_{3}$ be three admissible weights. For any $\alpha\in (0,1)$ and $\beta,\gamma\in \R$ with 
	$\alpha+\beta+\gamma>0$ and $\beta+\gamma<0$, there exists a bounded trilinear operator $\mathrm{com}$ 
	on $\bC^\alpha(\rho_{1})\times \bC^\beta(\rho_{2})\times \bC^\gamma(\rho_{3})$ such that
	\begin{align}\label{FA1}
	\|\mathrm{com}(f,g,h)\|_{\bC^{\alpha+\beta+\gamma}(\rho_{1}\rho_{2}\rho_{3})}\lesssim 
	\|f\|_{\bC^\alpha(\rho_{1})}\|g\|_{\bC^\beta(\rho_{2})}\|h\|_{\bC^\gamma(\rho_{3})},
	\end{align}
	where
	$$
	\mathrm{com}(f,g,h):=(f\prec g)\circ h - f(g\circ h).
	$$
\end{lemma}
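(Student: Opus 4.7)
My strategy is to carry out the standard frequency-localized analysis of the commutator and extend it to the weighted setting via \eqref{DM9}. Since $\beta+\gamma<0$, the resonance $g\circ h$ need not make sense as a distribution on its own, so I first interpret $\mathrm{com}(f,g,h)$ through its Littlewood--Paley series. Writing $f\prec g=\sum_k S_{k-1}f\cdot\Delta_k g$, decomposing $f=S_{k-1}f+\sum_{i\geq k-1}\Delta_i f$ inside the formal expression for $f(g\circ h)$, and using that $\Delta_m(S_{k-1}f\cdot\Delta_k g)$ vanishes unless $|m-k|\leq N_0$ for a fixed $N_0\in\mN$, I obtain the algebraic identity
\begin{equation*}
\mathrm{com}(f,g,h)=\sum_{|m-j|\leq 1}\sum_{|k-m|\leq N_0}\bigl[\Delta_m,S_{k-1}f\bigr](\Delta_k g)\,\Delta_j h\;-\;\sum_{|k-j|\leq 1}\sum_{i\geq k-1}\Delta_i f\cdot\Delta_k g\cdot\Delta_j h,
\end{equation*}
which converges absolutely on Schwartz inputs and serves as the working definition of $\mathrm{com}$.

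Next I estimate each dyadic block $\|\Delta_N\mathrm{com}(f,g,h)\|_{L^\infty}$ and read off the $\bC^{\alpha+\beta+\gamma}$ norm. Since $\Delta_k g\cdot\Delta_j h$ with $|k-j|\leq 1$ is Fourier-supported in a ball of radius $\lesssim 2^k$, while $\Delta_i f$ is supported in an annulus of scale $2^i$, the product is Fourier-supported in a ball of radius $\lesssim 2^{\max(i,k)}$, so $\Delta_N$ of each summand in the second sum vanishes unless $\max(i,k)\gtrsim N$. Using the standard Bernstein-type bounds $\|\Delta_i f\|_{L^\infty}\lesssim 2^{-i\alpha}\|f\|_{\bC^\alpha}$, $\|\Delta_k g\|_{L^\infty}\lesssim 2^{-k\beta}\|g\|_{\bC^\beta}$, $\|\Delta_j h\|_{L^\infty}\lesssim 2^{-k\gamma}\|h\|_{\bC^\gamma}$, I am reduced to the geometric sum $\sum_{i\geq k-1,\,\max(i,k)\gtrsim N} 2^{-i\alpha-k(\beta+\gamma)}$. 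The inner sum in $i$ converges by $\alpha>0$ and produces $\lesssim 2^{-\alpha\max(k,N)}$; for $k\leq N$ the residual $k$-sum is geometric with ratio $2^{-(\beta+\gamma)}>1$ (by $\beta+\gamma<0$) and is dominated by its $k=N$ term, while for $k>N$ the sum $\sum_{k>N}2^{-k(\alpha+\beta+\gamma)}$ converges by $\alpha+\beta+\gamma>0$. Both regimes yield the desired dyadic rate $2^{-N(\alpha+\beta+\gamma)}$. The block-commutator piece $[\Delta_m,S_{k-1}f](\Delta_k g)$ obeys the familiar bound $\|[\Delta_m,\phi]u\|_{L^\infty}\lesssim 2^{-m\alpha}\|\phi\|_{\bC^\alpha}\|u\|_{L^\infty}$ and is summed analogously, giving a strictly better contribution.

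To incorporate the admissible weights I use \eqref{DM9} to reduce matters to controlling $\|\rho_1\rho_2\rho_3\cdot\mathrm{com}(f,g,h)\|_{\bC^{\alpha+\beta+\gamma}}$ by the unweighted norms of $\rho_1 f$, $\rho_2 g$, $\rho_3 h$. Because $|\nabla^j\rho_i|\lesssim\rho_i$, multiplication by $\rho_i$ maps $\bC^s(\tilde\rho\rho_i^{-1})$ boundedly into $\bC^s(\tilde\rho)$ for any $s\in\R$ and any admissible $\tilde\rho$, so the difference $\rho_1\rho_2\rho_3\cdot\mathrm{com}(f,g,h)-\mathrm{com}(\rho_1 f,\rho_2 g,\rho_3 h)$ can be rewritten as a finite sum of paraproduct expressions to which Lemma~\ref{lem:para} applies directly. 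I expect the main technical obstacle to be precisely this bookkeeping step: one must case-split by which Littlewood--Paley block each weight ``crosses'' and verify that every resulting error term has regularity strictly better than $\alpha+\beta+\gamma$, so it is absorbed into the claimed bound. Once this is done, density of Schwartz functions extends the estimate to the full range of $(f,g,h)$ and furnishes the bounded trilinear operator $\mathrm{com}$.
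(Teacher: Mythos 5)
Your proposal is essentially correct and reproduces the classical frequency-localized argument (going back to Lemma 2.4 of \cite{GIP15}). The paper itself provides no proof — it simply cites \cite[Lemma 2.16]{GH18} — so the relevant comparison is with that reference. Where you deviate is in the treatment of the weights: you propose to use \eqref{DM9} to pass to unweighted Besov norms of $\rho_1 f$, $\rho_2 g$, $\rho_3 h$ and then control the discrepancy $\rho_1\rho_2\rho_3\cdot\mathrm{com}(f,g,h)-\mathrm{com}(\rho_1 f,\rho_2 g,\rho_3 h)$, whereas the standard route (and the one taken in \cite{GH18}) is to run the entire block-by-block analysis \emph{directly in weighted spaces}, using the weighted block-commutator estimate $\|[\Delta_j,\phi]u\|_{L^\infty(\rho_1\rho_2)}\lesssim 2^{-j\alpha}\|\phi\|_{\bC^\alpha(\rho_1)}\|u\|_{L^\infty(\rho_2)}$ and the weighted paraproduct bounds of Lemma \ref{lem:para}. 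The latter avoids the weight-leakage bookkeeping altogether: in your approach, terms like $\rho_1\rho_2(f\prec g)-(\rho_1 f)\prec(\rho_2 g)$ are not honestly ``a finite sum of paraproduct expressions'' but require their own commutator estimates with the weights, so the ``main technical obstacle'' you flag is indeed nontrivial and its omission is the real gap in your writeup. Working directly with weighted dyadic blocks, each inequality you already wrote in the unweighted case carries over verbatim with $\rho_1\rho_2\rho_3$ inserted, which is cleaner.

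Two minor points worth fixing. First, the displayed algebraic identity is not exact: after splitting $\Delta_m(S_{k-1}f\Delta_k g)=S_{k-1}f\,\Delta_m\Delta_k g+[\Delta_m,S_{k-1}f]\Delta_k g$, the resonant piece $\sum_{|m-j|\leq 1}\sum_{|k-m|\leq 1}S_{k-1}f\,\Delta_m\Delta_k g\,\Delta_j h$ does not coincide exactly with $\sum_{|k-j|\leq 1}S_{k-1}f\,\Delta_k g\,\Delta_j h$; there is a leftover of the same type (three blocks within bounded distance of one another), which must be absorbed into the second sum. It satisfies the same bound, so nothing is lost, but the identity as written is stated too cleanly. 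Second, your commutator term bound gives the same rate $2^{-N(\alpha+\beta+\gamma)}$, not a ``strictly better contribution'' — all indices $m,k,j$ are comparable there, so the gain is only that the sum over blocks is trivially convergent, not that the exponent improves.
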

\begin{proof}
	See \cite[Lemma 2.16]{GH18}.
\end{proof}
Moreover, we will make use of the time-mollified paraproducts as introduced in \cite[Section 5]{GIP15}. 
Let $Q:\R\to\R_{+}$ be a smooth function with support in $[-1,1]$ and $\int_{\R}Q(s)\mathrm{d}s=1$. 
For $T>0$ and $j\geq -1$, we define an operator $Q_{j}: L^\infty_T\bC^{\alpha}(\rho)\to L^\infty_T\bC^{\alpha}(\rho)$ by
$$
Q_{j}f(t):=\int_{\R}2^{2j}Q(2^{2j}(t-s))f( (s\wedge T)\vee 0)\mathrm{d} s,
$$
and the modified paraproduct of $f,g\in L^\infty_T\bC^{\alpha}(\rho)$ by
$$
f\Prec g := \sum_{j\geq -1}(S_{j-1}Q_{j}f)\Delta_{j} g\ \mbox{ with } S_jf=\sum_{i\leq j-1}\Delta_if.
$$
Note that for $\alpha\leq 0$, $\beta\in\mR$ and $\rho_1,\rho_2\in\sW$,
\begin{equation}\label{GZ11}
	\|f\Prec g\|_{L^\infty_T\bC^{\alpha+\beta}(\rho_{1}\rho_{2})}\lesssim\|f\|_{L^\infty_T\bC^{\alpha}(\rho_{1})}\|g\|_{L^\infty_T\bC^{\beta}(\rho_{2})}.
\end{equation}

\begin{lemma}\label{lem:5.1}
	Let $\rho_{1},\rho_{2}$ be two admissible  weights. For any $\alpha\in (0,1)$ and $\beta\in \R$, there is a constant
	$C=C(\rho_1,\rho_2,d,\alpha,\beta)>0$ such that for all $\lambda\geq 0$ and $T>0$,
	\begin{align}
	\big\|[\sL_\lambda,f\Prec] g\big\|_{L^\infty_T\bC^{\alpha+\beta-2}(\rho_{1}\rho_{2})}&\lesssim_C  \|f\|_{\mS^{\alpha}_T(\rho_{1})}\|g\|_{L^\infty_T\bC^{\beta}(\rho_{2})},\label{GA3}
	\end{align}
	and
	\begin{align}\label{GA4}
	\|f\prec g-f\Prec g\|_{L^\infty_T\bC^{\alpha+\beta}(\rho_{1}\rho_{2})}\lesssim_C 
	\|f\|_{C^{\alpha/2}_TL^\infty(\rho_{1})}\|g\|_{L^\infty_T\bC^{\beta}(\rho_{2})}.
	\end{align}
Moreover, for any $\eps>0$, we also have for some $C=C(\eps,\rho_1,\rho_2,d,\alpha,\beta)$,
	\begin{align}
	\|[\nabla \sI_\lambda, f \prec ]g\|_{L^\infty_T\bC^{\alpha+\beta+1-\eps}(\rho_1\rho_2)}
	&\lesssim_C\|f\|_{\mS^{\alpha}_T(\rho_1)}\|g\|_{L^\infty_T\bC^\beta(\rho_2)}.\label{GA44}
	\end{align}
\end{lemma}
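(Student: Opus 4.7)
The three inequalities share a single engine: the block decomposition of $\Prec$ together with the fact that $Q_j$ mollifies in time at scale $2^{-2j}$, which matches the spatial scale of $\Delta_j$. I would treat the estimates in the order \eqref{GA3}, \eqref{GA4}, \eqref{GA44}, because the third reduces to the first two via a paracontrolled ansatz.

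For \eqref{GA3}, I would apply the Leibniz-type identity
\[
\sL_\lambda(u\cdot v)=u\,\sL_\lambda v+(\sL u)\,v-2\nabla u\cdot\nabla v
\]
to each summand $(S_{j-1}Q_j f)\Delta_j g$ of $f\Prec g$. Because $\sL_\lambda$ commutes with $\Delta_j$, the $\lambda v$ contribution cancels with the one coming from $f\Prec \sL_\lambda g$, leaving
\[
[\sL_\lambda, f\Prec]g=\sum_{j\geq -1}\bigl(\sL(S_{j-1}Q_j f)\,\Delta_j g-2\nabla(S_{j-1}Q_j f)\cdot\nabla\Delta_j g\bigr).
\]
Each summand is spectrally supported in an annulus of size $2^j$ (inherited from $\Delta_j g$, since $S_{j-1}Q_j f$ is supported on strictly lower frequencies). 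Using Bernstein together with the two bounds
\[
\|\partial_t Q_j f\|_{L^\infty_T L^\infty(\rho_1)}\lesssim 2^{(2-\alpha)j}\|f\|_{C^{\alpha/2}_T L^\infty(\rho_1)},\quad
\|\Delta S_{j-1}Q_jf\|_{L^\infty_T L^\infty(\rho_1)}\lesssim 2^{(2-\alpha)j}\|f\|_{L^\infty_T\bC^\alpha(\rho_1)},
\]
the first sum is bounded by $2^{(2-\alpha-\beta)j}\|f\|_{\mS^\alpha_T(\rho_1)}\|g\|_{L^\infty_T\bC^\beta(\rho_2)}$, and the second by the analogous product after using $\|\nabla \Delta_j g\|_{L^\infty(\rho_2)}\lesssim 2^{(1-\beta)j}\|g\|_{\bC^\beta(\rho_2)}$. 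Multiplying by $2^{(\alpha+\beta-2)j}$ and taking the sup in $j$ gives \eqref{GA3}.

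For \eqref{GA4}, I would write the elementary identity
\[
f\prec g-f\Prec g=\sum_{j\geq-1}S_{j-1}(f-Q_jf)\,\Delta_j g.
\]
Since $Q_j$ is an average at time scale $2^{-2j}$ and $f\in C^{\alpha/2}_T L^\infty(\rho_1)$, one gets $\|(f-Q_jf)(t)\|_{L^\infty(\rho_1)}\lesssim 2^{-\alpha j}\|f\|_{C^{\alpha/2}_T L^\infty(\rho_1)}$; combined with $\|\Delta_j g\|_{L^\infty(\rho_2)}\lesssim 2^{-\beta j}\|g\|_{\bC^\beta(\rho_2)}$ and the spectral localization of the $j$th summand, this yields \eqref{GA4}. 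Then for \eqref{GA44} I would use the paracontrolled ansatz: set $R:=\sI_\lambda(f\prec g)-f\Prec\sI_\lambda g$; applying $\sL_\lambda$ and using that $\sL_\lambda\sI_\lambda=\mathrm{Id}$ gives
\[
\sL_\lambda R=(f\prec g-f\Prec g)-[\sL_\lambda,f\Prec]\sI_\lambda g,
\]
which by \eqref{GA3}, \eqref{GA4} and Schauder estimate \eqref{EG1} (applied to $\sI_\lambda g\in \mS^{\beta+2-\eps}$) lies in $L^\infty_T\bC^{\alpha+\beta-\eps'}(\rho_1\rho_2)$ for any small $\eps'>0$. Invoking \eqref{EG1} once more gives $\nabla R\in L^\infty_T\bC^{1+\alpha+\beta-\eps}(\rho_1\rho_2)$. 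Since $\nabla$ commutes with both $S_{j-1}$ and $Q_j$, one has the Leibniz rule $\nabla(f\Prec\sI_\lambda g)=(\nabla f)\Prec\sI_\lambda g+f\Prec\nabla\sI_\lambda g$, whence
\[
[\nabla\sI_\lambda,f\prec]g=\nabla R+(\nabla f)\Prec\sI_\lambda g+(f\Prec-f\prec)\nabla\sI_\lambda g;
\]
the second term is handled by \eqref{GZ11} (with $\nabla f\in\mS^{\alpha-1}_T$ and $\sI_\lambda g\in L^\infty_T\bC^{\beta+2-\eps}$), and the third by \eqref{GA4} together with \eqref{*}.

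The main obstacle I anticipate is keeping the weight bookkeeping tight and absorbing the $\eps$-loss without spoiling the final regularity index in \eqref{GA44}: the Schauder bound on $\sI_\lambda g$ only yields $\mS^{2+\beta-\eps}_T$ rather than the sharp $\mS^{2+\beta}_T$ (because $g$ only lives in $L^\infty_T\bC^\beta$, not in a larger $L^q_T$-based space with exponent permitting $q=\frac{2}{2-\theta}$), and this $\eps$ must be tracked carefully when estimating $[\sL_\lambda,f\Prec]\sI_\lambda g$ via \eqref{GA3}. A second small point is that the commutator identity above requires $\nabla$ to commute with $Q_j$ and $S_{j-1}$, which is true because $Q_j$ acts only in time and $S_{j-1}$ is a Fourier multiplier. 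Once these points are addressed, the three estimates assemble as above.
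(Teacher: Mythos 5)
Your proof is correct, and for \eqref{GA44} it takes a genuinely different route from the paper. For \eqref{GA3} and \eqref{GA4} the paper simply cites [GH18, Lemma 2.17], while you give direct proofs via the block decomposition of $\Prec$: the Leibniz identity $\sL_\lambda(uv)=u\sL_\lambda v+(\sL u)v-2\nabla u\cdot\nabla v$ applied to each summand, the bound $\|\partial_t Q_j f\|_{L^\infty(\rho_1)}\lesssim 2^{(2-\alpha)j}\|f\|_{C^{\alpha/2}_TL^\infty(\rho_1)}$ exploiting that $\int Q'=0$, and the spectral localization of each summand in an annulus of scale $2^j$; and for \eqref{GA4} the identity $f\prec g-f\Prec g=\sum_j S_{j-1}(f-Q_jf)\Delta_jg$ with $\|f-Q_jf\|_{L^\infty(\rho_1)}\lesssim2^{-\alpha j}\|f\|_{C^{\alpha/2}_TL^\infty(\rho_1)}$. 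These arguments are correct and essentially what GH18 does.

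For \eqref{GA44} the paper expands $[\nabla\sI,f\prec]g$ via the Duhamel formula into the three integrals $I_1,I_2,I_3$ and estimates $I_2=\int_0^t[P_{t-s},f(s)\prec]\nabla g(s)\,\dif s$ by a semigroup-commutator bound cited from [CC18, Lemma A.1]. You instead introduce $R:=\sI_\lambda(f\prec g)-f\Prec\sI_\lambda g$ (note $R(0)=0$), compute $\sL_\lambda R=(f\prec g-f\Prec g)-[\sL_\lambda,f\Prec]\sI_\lambda g$, and bootstrap through \eqref{GA3}, \eqref{GA4} and Schauder; together with the Leibniz rule $\nabla(f\Prec h)=(\nabla f)\Prec h+f\Prec\nabla h$ and \eqref{GZ11} this assembles the claim. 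The algebra of your decomposition $[\nabla\sI_\lambda,f\prec]g=\nabla R+(\nabla f)\Prec\sI_\lambda g+(f\Prec-f\prec)\nabla\sI_\lambda g$ is right. Your approach buys self-containedness (no appeal to the external heat-semigroup commutator bound) at the modest cost of one extra Schauder application and one extra invocation of \eqref{GA3}. Your flagged worry about the $\eps$-loss is accurate but not a defect: the statement carries the $\eps$, and the paper's own bounds of $I_2$ and $I_3$ incur the same loss. Two small corrections: to apply \eqref{GA3} you only need the spatial regularity $\sI_\lambda g\in L^\infty_T\bC^{\beta+2}(\rho_2)$, which \eqref{EG01} with $\theta=2$, $q=\infty$ gives without $\eps$-loss, so the loss enters only through $[\sL_\lambda,f\Prec]\sI_\lambda g\mapsto R$ via the stated Schauder range and through \eqref{GA4} on $\nabla\sI_\lambda g$; and $\mS^{\alpha-1}_T(\rho_1)$ is not a defined space (the index is outside $(0,2)$) — what \eqref{GZ11} needs is $\nabla f\in L^\infty_T\bC^{\alpha-1}(\rho_1)$ with $\alpha-1\le 0$, which follows from $f\in L^\infty_T\bC^\alpha(\rho_1)$.
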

\begin{proof}
	The estimates \eqref{GA3} and \eqref{GA4} can be found in \cite[Lemma 2.17]{GH18}. We only prove \eqref{GA44}.
	Without loss of generality, we assume $\lambda=0$.
	Recalling $\sI f(t)=\int^t_0 P_{t-s} f(s)\dif s$ and by definition, we have
	\begin{align*}
	[\nabla \sI, f \prec ]g(t)&=\int^t_0P_{t-s}\nabla(f(s)\prec g(s))\dif s-f(t)\prec\int^t_0\nabla P_{t-s} g(s)\dif s
	\\&=\int_0^t P_{t-s} (\nabla f(s)\prec g(s))\dif s+\int_0^t[P_{t-s},f(s)\prec] \nabla g(s) \dif s
	\\&+\int_0^t(f(s)-f(t))\prec P_{t-s}\nabla g(s)\dif s=:I_1(t)+I_2(t)+I_3(t).
	\end{align*}
For $I_1$, by \eqref{EG01} with $\theta=2$ and $q=\infty$ and \eqref{GZ1}, we have
	\begin{align*}
	\|I_1\|_{L^\infty_T\bC^{\alpha+\beta+1}(\rho_1\rho_2)}
	&\lesssim \|\nabla f\prec g\|_{L^\infty_T\bC^{\alpha+\beta-1}(\rho_1\rho_2)}
	\lesssim \|f\|_{L^\infty_T\bC^\alpha(\rho_1)}\|g\|_{L^\infty_T\bC^\beta(\rho_2)}.
	\end{align*}
	For $I_2$, by a modification of \cite[Lemma A.1]{CC18} we have
	\begin{align*}
	\|I_2(t)\|_{\bC^{\alpha+\beta+1-\eps}(\rho_1\rho_2)}
	&\lesssim \int_0^t (t-s)^{-1+\frac{\eps}{2}}\|f(s)\|_{\bC^\alpha(\rho_1)}\|g(s)\|_{\bC^\beta(\rho_2)}\dif s
	\\&\lesssim\|f\|_{L^\infty_T\bC^\alpha(\rho_1)}\|g\|_{L^\infty_T\bC^\beta(\rho_2)}.
	\end{align*}
	For $I_3$, by \eqref{GZ0} and \eqref{E1} we have
	\begin{align*}
       \|I_3(t)\|_{\bC^{\alpha+\beta+1-\eps}(\rho_1\rho_2)}
       &\lesssim\int_0^t\|f(s)-f(t)\|_{L^\infty(\rho_1)}\|\nabla P_{t-s}g(s)\|_{\bC^{\alpha+\beta+1-\eps}(\rho_2)}\dif s
\\&       \lesssim \|f\|_{C_T^{\alpha/2}L^\infty(\rho_1)}\|g\|_{L^\infty_T\bC^\beta(\rho_2)}\int^t_0(t-s)^{-1+\frac{\eps}{2}}\dif s.
	\end{align*}
	The proof is complete.
\end{proof}

Finally we recall the localization operators from \cite{GH18}. Let  $\sum_{k\geq-1} w_k = 1$ be a smooth dyadic  partition of unity on $\R^d$,
where $w_{-1}$ is supported in a ball containing zero and each $w_k$ for $k\geq0$ is supported on the annulus of size $2^k$.
Let $(v_{m})_{m\geq -1}$ be a smooth dyadic partition of unity on $[0,\infty)$ such that  $v_{-1}$ is 
supported in a ball containing zero and each $v_m$ for $m\geq0$ is supported on the annulus of size $2^m$.
For a given sequence $(L_{k,m})_{k,m\geq -1}$  we define localization operators $\VV_{>},
\VV_{\leqslant}$  as in \cite{GH18}
\begin{equation}\label{dec}
\VV_{>} f = \sum_{k,m}w_k v_{m}\sum_{j>L_{k,m}}\Delta_j f, \qquad
\VV_{\leqslant} f = \sum_{k,m} w_k v_{m}\sum_{j\leq L_{k,m}}\Delta_j f.
\end{equation}

\begin{lemma}\label{lem:local2}
	Let $\rho$ be an admissible weight. For given $L>0, T>0$,
	there exists a (universal) choice of parameters $(L_{k,m})_{k,m\geq -1}$ such that for all $\alpha,\beta,\kappa\in\mR$ with 
	$\alpha+\kappa>0$\footnote{Here the condition is slightly different from \cite[Lemma 2.6]{GH18}, but the proof follows along the same line.}, $\delta>0$ and $0\leq t\leq T$,
	$$ 
	\| \VV_{>} f \|_{L^\infty_T\bC^{- \alpha - \delta} (\rho^{\beta-\delta})} \lesssim 2^{- \delta L} \| f \|_{L^\infty_T\bC^{- \alpha}(\rho^{\beta})},
	$$
	$$
	\| \VV_{\leqslant} f \|_{L^\infty_T\bC^{\kappa} (\rho^{\alpha+\beta+\kappa})} \lesssim 2^{(\alpha + \kappa)L} \| f \|_{L^\infty_T\bC^{- \alpha} (\rho^{\beta})}, 
	$$
	where the proportional constant depends on $\alpha,\beta,\delta,\kappa$ but is independent of $f$.
\end{lemma}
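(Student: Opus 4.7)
The plan is to follow the strategy of \cite[Lemma 2.6]{GH18} with a minor adjustment to accommodate the slightly weaker hypothesis $\alpha+\kappa>0$. The key design choice is to take $L_{k,m} := L + \lfloor a_1 k + a_2 m \rfloor$ for constants $a_1, a_2 \geq 0$ large enough (depending on $\rho,\alpha,\beta,\delta,\kappa$) so that the $(k,m)$-dependent corrections in $L_{k,m}$ can be paid for by the change of weight. Since $\rho$ is admissible and in practice polynomial-like, on the support of $w_k$ (an annulus of radius $\approx 2^k$) one has two-sided estimates of the form $\rho(x)^{\pm 1} \lesssim 2^{c_\rho k}$; this is the mechanism that converts frequency losses into weight gains.

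For the estimate on $\VV_> f$, the starting point is that by the partition-of-unity property, for each fixed $(t,x)\in[0,T]\times\mR^d$ only $O(1)$ of the indicators $w_k(x)v_m(t)$ are nonzero, so it suffices to bound each block $w_k v_m \sum_{j>L_{k,m}}\Delta_j f$ separately in $\bC^{-\alpha-\delta}(\rho^{\beta-\delta})$. Using the equivalence $\|g\|_{\bC^s(\rho^\gamma)}\asymp\|\rho^\gamma g\|_{\bC^s}$ (from \eqref{DM9}) and a Bernstein-type gain for the high-frequency projector, the truncation $\sum_{j>L_{k,m}}$ produces a factor $2^{-\delta L_{k,m}}$ when moving from $\bC^{-\alpha}$ to $\bC^{-\alpha-\delta}$. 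On $\mathrm{supp}\,w_k$ the weight change costs at most $\rho^{-\delta}\lesssim 2^{c_\rho\delta k}$, and this is absorbed by choosing $a_1$ with $\delta a_1\geq c_\rho\delta$. The multiplication by the smooth cutoffs $w_k, v_m$ is controlled through a standard paraproduct argument (or the product estimate \eqref{GZ3}), using that $\|\nabla^\ell w_k\|_{L^\infty}\lesssim 2^{-\ell k}$ so the spreading of frequencies is negligible relative to the main gain. Summing over $(k,m)$ with finite overlap gives the first inequality.

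For the estimate on $\VV_\leq f$, the argument is dual: Bernstein gives the growth bound $\|\sum_{j\leq L_{k,m}}\Delta_j f\|_{\bC^\kappa(\rho^\beta)}\lesssim 2^{(\alpha+\kappa)L_{k,m}}\|f\|_{\bC^{-\alpha}(\rho^\beta)}$, and the hypothesis $\alpha+\kappa>0$ ensures that this factor is meaningful (and that $\bC^{-\alpha}\hookrightarrow \bC^{-\alpha}$ composes correctly with the Bernstein direction). Multiplying by $w_k v_m$ and re-weighting by $\rho^{\alpha+\kappa}$, the admissibility of $\rho$ yields $\rho(x)^{\alpha+\kappa}\lesssim 2^{-c_\rho(\alpha+\kappa)k}$ on $\mathrm{supp}\,w_k$, which compensates the corresponding $2^{(\alpha+\kappa)a_1 k}$ loss once $a_1$ is chosen so that $c_\rho(\alpha+\kappa)\geq (\alpha+\kappa)a_1$ (positivity of $\alpha+\kappa$ is exactly what fixes the sign of this trade-off). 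The time component is easier because $t\in[0,T]$ forces only finitely many $v_m$ to be nonzero, so no delicate summation in $m$ is needed once $a_2$ is chosen big enough to kill the $2^{(\alpha+\kappa)a_2 m}$ factor against a trivial bound $1$.

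The main obstacle will be the spatial multiplication step: the cutoffs $w_k$ are smooth spatial bumps, not frequency projectors, so multiplying a frequency-localized block by $w_k$ spreads its spectrum. The cleanest way to handle this is to treat $w_k \cdot g_{k,m}$ through Bony's decomposition $w_k \prec g_{k,m} + w_k \succ g_{k,m} + w_k\circ g_{k,m}$ and apply Lemma~\ref{lem:para} to each piece, noting that $w_k\in\bC^s$ uniformly in $k$ for every $s$ (with the correct $k$-scaling). The rest is bookkeeping, essentially identical to \cite[Lemma 2.6]{GH18}, and the weaker hypothesis $\alpha+\kappa>0$ shows up only as a sharpening of the Bernstein step in the second bound, with no substantive change to the proof architecture.
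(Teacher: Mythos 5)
The paper does not give its own proof of this lemma; it simply cites \cite[Lemma 2.6]{GH18}. Your sketch is a reasonable reconstruction of that argument, and the architecture — finite overlap of $w_k v_m$, a Bernstein-type gain/loss for the high/low frequency truncation, converting frequency into weight via the estimate $\rho(x)^{\pm 1}\lesssim 2^{c_\rho k}$ on $\mathrm{supp}\,w_k$, and paraproducts to absorb the spectral spreading caused by multiplication with the cutoffs — is the right one.

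One point in your write-up deserves correction, because as phrased it would sink the universality claim. You say one should take $a_1$ ``large enough, depending on $\rho,\alpha,\beta,\delta,\kappa$.'' This is inconsistent with the statement, which asserts a single choice of $(L_{k,m})$ working for \emph{all} admissible $(\alpha,\beta,\delta,\kappa)$. In fact, if you look at the two constraints you actually wrote down, they pin the parameter exactly: the $\VV_>$ estimate needs $\delta a_1 \geq c_\rho\delta$, i.e.\ $a_1\geq c_\rho$, while the $\VV_\leq$ estimate needs $c_\rho(\alpha+\kappa)\geq(\alpha+\kappa)a_1$, and since $\alpha+\kappa>0$ this is $a_1\leq c_\rho$. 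Hence $a_1 = c_\rho$ exactly, a quantity depending on $\rho$ alone; the factors $\delta$ and $\alpha+\kappa$ cancel identically, and that cancellation is precisely \emph{why} a universal choice exists. Taking $a_1>c_\rho$ would make the second estimate fail (the net exponent $(\alpha+\kappa)(a_1 - c_\rho)k$ would diverge in $k$), so ``large enough'' is not a safe framing. Once you replace that with ``$a_1=c_\rho$, and note that the resulting factor $2^{(\alpha+\kappa)(L_{k,m}-c_\rho k)}=2^{(\alpha+\kappa)L}$ is exactly the claimed bound,'' the sketch is in order and matches the cited GH18 approach; the relaxation from $\kappa>0$ to $\alpha+\kappa>0$ only enters, as you note, through the Bernstein step for $\VV_\leq f$.
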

\begin{proof}
	See \cite[Lemma 2.6]{GH18}.
\end{proof}


{
\subsection{Renormalized pairs}\label{sec:2.5}

In this subsection we introduce the  renormalized pairs, which is one important part in Gubinelli-Imkeller-Perkowski's 
pracontrolled theory.
Fix $\alpha\in(\frac12,\frac23)$ and an admissible weight $\rho\in\sW$. For $T>0$, let $b=(b_1,\cdots, b_d)$ and $f$ be $(d+1)$-distributions in $L^\infty_T\bC^{-\alpha}(\rho)$.
First of all, we introduce two important quantities for later use
\begin{align}\label{AAb}
\ell^b_T(\rho):=\sup_{\lambda\geq 0}\|b\circ\nabla\sI_\lambda b\|_{L^\infty_T\bC^{1-2\alpha}(\rho^2)}+\|b\|^2_{L_T^\infty\bC^{-\alpha}(\rho)}+1,
\end{align}
and for $q\in[1,\infty]$,
\begin{align}\label{AA9}
\mA^{b,f}_{T,q}(\rho):=\sup_{\lambda\geq 0}\|b\circ\nabla\sI_\lambda f\|_{L^q_T\bC^{1-2\alpha}(\rho^2)}+\|b\|_{L_T^\infty\bC^{-\alpha}(\rho)}\|f\|_{L^q_T\bC^{-\alpha}(\rho)}.
\end{align}
By \eqref{GZ2}, except for $\alpha<\frac12$, in general, $b(t)\circ\nabla\sI_\lambda f(t)$ is not well-defined since by Schauder's estimate,
we only  have (see Lemma \ref{Le11})
$$
\nabla\sI_\lambda f\in L^\infty_T\bC^{1-\alpha}(\rho).
$$ 
However, in the probabilistic sense, it is possible to give a meaning for $b\circ\nabla\sI_\lambda f$ when $b, f$ belong to 
the chaos of Gaussian noise (see Section \ref{sec:kpz} below). This motivates us to introduce the following notion.
\bd\label{Def216}
We call the above $(b,f)\in L^\infty_T\bC^{-\alpha}(\rho)$ a renormalized pair if there exist $b_n, f_n\in L^\infty_T\sC^\infty(\rho)$ 
with $\sup_{n\in\mN}\big(\ell^{b_n}_T(\rho)+\mA^{b_n,f_n}_{T,\infty}(\rho)\big)<\infty$ and such that 
$(b_n,f_n)$ converges to $(b,f)$ in $L_T^\infty\bC^{-\alpha}(\rho)$, and  for each $\lambda\geq 0$, there are functions $g_\lambda, h_\lambda\in L_T^\infty\bC^{1-2\alpha}(\rho^2)$ such that
\begin{align}\label{Lim1}
\lim_{n\to\infty}\|b_n\circ\nabla\sI_\lambda f_n-g_\lambda\|_{L_T^\infty\bC^{1-2\alpha}(\rho^2)}=0
\end{align}
and
\begin{align}\label{Lim2}
\lim_{n\to\infty}\|b_n\circ\nabla\sI_\lambda b_n-h_\lambda\|_{L_T^\infty\bC^{1-2\alpha}(\rho^2)}=0.
\end{align}
For notational convenience, we shall write
$$
g_\lambda=:b\circ\nabla\sI_\lambda f,\ h_\lambda=:b\circ\nabla\sI_\lambda b.
$$ 
The set of all the above renormalized pair is denoted by $\mB^\alpha_T(\rho)$.
\ed

\br\rm
(i) Let $b\in \mL^\infty_T(\rho)$ and $f\in L^\infty_T\bC^{-\alpha}(\rho)$. Let $b_n(t,x):=b(t,\cdot)*\rho_n(x)$ and $f_n(t,x):=f(t,\cdot)*\rho_n(x)$ be the mollifying approximation.
By definition and \eqref{GZ2}, it is easy to see that $(b,f)\in\mB^\alpha_T(\rho)$.
Moreover, if $(b,f)\in\mB^\alpha_T(\rho)$ and $b'\in\mL^\infty_T(\rho)$, then $(b+b', f)\in\mB^\alpha_T(\rho)$. 

(ii) To make the convergence hold in \eqref{Lim1} and \eqref{Lim2}, we may need to subtract some terms containing renormalization constants in the approximation $b_n\circ\nabla \sI_\lambda f_n$ and $b_n\circ\nabla \sI_\lambda b_n$. In Definition \ref{Def216}, we suppose the renormalization constants are zero for simplicity, since in application we can choose symmetric mollifiers for approximation, 
which makes the renormalization constant disappear.  In general we only use the uniform bounds 
$\sup_{n\in\mN}\big(\ell^{b_n}_T(\rho)+\mA^{b_n,f_n}_{T,\infty}(\rho)\big)<\infty$ and the convergence \eqref{Lim1}, 
\eqref{Lim2} and the renormalization constants do not affect our analysis and calculations. 
\er
To eliminate the parameter $\lambda$ in \eqref{Lim1} and \eqref{Lim2}, the following lemma is useful.
\bl\label{lem:lambda} Let $\sI^{t}_s(f)=\int_s^t P_{t-r}f(r)\dif r$. For any $t>0$, we have
\begin{equation}\label{I}
\sup_{\lambda\geq 0}\|b(t)\circ\nabla \sI_\lambda f(t)\|_{\bC^{1-2\alpha}(\rho)}\leq 2\sup_{s\in [0,t]}\| b(t)\circ \nabla \sI^{t}_s(f)\|_{\bC^{1-2\alpha}(\rho)}.
\end{equation}
\el
\begin{proof}
	Note that by integration by parts formula,
	\begin{align*}
	&\int_0^t e^{-\lambda(t-s)}P_{t-s}f(s)\dif s
	=\int^t_0P_{t-s}f(s)\dif s-\lambda\int^t_0\e^{-\lambda(t-s)}\int^s_0P_{t-r}f(r)\dif r\dif s\\
	&\qquad=\e^{-\lambda t}\int^t_0P_{t-s}f(s)\dif s+\lambda\int^t_0\e^{-\lambda(t-s)}\int^t_{s}P_{t-r}f(r)\dif r\dif s.
	\end{align*}
	Thus,
	\begin{align*}
	&b(t)\circ\nabla \sI_\lambda f(t)=
	\e^{-\lambda t}b(t)\circ\nabla \sI^t_0f+\lambda\int^t_0\e^{-\lambda(t-s)}b(t)\circ\nabla \sI^t_s(f)\dif s.
	\end{align*} From this we get the desired estimate.
\end{proof}

The following localized property about the operation $\circ$ is also useful.
\bl\label{le:loc}
Let $T>0$, $\rho,\bar\rho\in\sW$, $\eps\in(0,1)$ and $\alpha\in(\frac12,\frac23)$.
Suppose that
$$
\phi\in \bC^{\alpha+\eps}(\bar\rho\rho^{-2}),\  \psi\in\mS^{\alpha+\eps}_T,\ (b,f)\in\mB^\alpha_T(\rho).
$$ 
Then there is a constant $C>0$ depending only on $T, \eps,\alpha,d,\rho,\bar\rho$ 
such that for all $\lambda\geq 0$ and $t\in[0,T]$,
\begin{align}
&\|((b\phi)\circ\nabla\sI_\lambda (f\psi))(t)\|_{\bC^{1-2\alpha}(\bar\rho)}
\lesssim_C\|\phi\|_{\bC^{\alpha+\eps}(\bar\rho\rho^{-2})}\|\psi\|_{\mS^{\alpha+\eps}_t}\mA^{b,f}_{t,\infty}(\rho).\label{EK0}
\end{align}
\el
\begin{proof}
We only prove the estimate \eqref{EK0}. For simplicity, we drop the time variable. By using paraproduct, we have
	\begin{align*}
	(b\phi)\circ\nabla \sI_\lambda (f\psi)
	&=(b\phi)\circ\nabla \sI_\lambda (\psi\succcurlyeq f)+(b\phi)\circ\nabla \sI_\lambda (\psi\prec f)
	\\&=(b\phi)\circ\nabla \sI_\lambda (\psi\succcurlyeq f)+(b\phi)\circ[\nabla \sI_\lambda ,\psi\prec]f
	\\&\quad+\textrm{com}(\psi,\nabla \sI_\lambda f,b\phi)+\psi((b\phi)\circ\nabla \sI_\lambda f)
	\\&=(b\phi)\circ\nabla \sI_\lambda (\psi\succcurlyeq f)+(b\phi)\circ[\nabla \sI_\lambda ,\psi\prec]f
	\\&\quad+\textrm{com}(\psi,\nabla \sI_\lambda f,b\phi)+\psi((\phi\succcurlyeq b)\circ\nabla \sI_\lambda f)
	\\&\quad+\psi\textrm{com}(\phi,b,\nabla \sI_\lambda f)+\psi\phi( b\circ\nabla \sI_\lambda f).
	\end{align*}
	Let $\eps>0$ being small enough.  We estimate each term as following.
	\begin{enumerate}[$\bullet$]
\item By \eqref{GZ2}, \eqref{EG1} and \eqref{GZ1}, we have
\begin{align*}
\|(b\phi)\circ\nabla \sI_\lambda (\psi\succcurlyeq f)\|_{\bC^0(\bar\rho)}
&\lesssim\|b\phi\|_{\bC^{-\alpha}(\bar\rho\rho^{-1})}\|\nabla \sI_\lambda (\psi\succcurlyeq f)\|_{L^\infty_t\bC^{\alpha+\eps}(\rho)}
\\&\lesssim\|b\phi\|_{\bC^{-\alpha}(\bar\rho\rho^{-1})}\|\psi\succ f+\psi\circ f\|_{L^\infty_t\bC^{\alpha-1+\eps}(\rho)}
\\&\lesssim\|\phi\|_{\bC^{\alpha+\eps}(\bar\rho\rho^{-2})}\|b\|_{\bC^{-\alpha}(\rho)}\|f\|_{L^\infty_t\bC^{-\alpha}(\rho)}\|\psi\|_{L^\infty_t\bC^{\alpha+\eps}}.
	\end{align*}
\item By \eqref{GZ2}, \eqref{GZ3} and \eqref{GA44}, we have
	\begin{align*}
	\|(b\phi)\circ[\nabla \sI_\lambda ,\psi\prec]f\|_{\bC^0(\bar\rho)}
&\lesssim\|b\phi\|_{\bC^{-\alpha}(\bar\rho\rho^{-1})}\|[\nabla \sI_\lambda ,\psi\prec]f\|_{L_t^\infty\bC^{\alpha+\eps}(\rho)}
\\&\lesssim\|\phi\|_{\bC^{\alpha+\eps}(\bar\rho\rho^{-2})}\|b\|_{\bC^{-\alpha}(\rho)}\|\psi\|_{\mS_t^{2\alpha-1+2\eps}}\|f\|_{L_t^\infty\bC^{-\alpha}(\rho)}.
	\end{align*}
\item By \eqref{FA1}, \eqref{EG1} and \eqref{GZ3}, we have
	\begin{align*}
\|\textrm{com}(\psi,\nabla \sI_\lambda f,b\phi)\|_{\bC^0(\bar\rho)}
&\lesssim\|\psi\|_{\bC^{2\alpha-1+\eps}}\|\nabla \sI_\lambda f\|_{L^\infty_t\bC^{1-\alpha}(\rho)}\|b\phi\|_{\bC^{-\alpha}(\bar\rho \rho^{-1})}
\\&\lesssim\|\psi\|_{\bC^{2\alpha-1+\eps}}\|f\|_{L^\infty_t\bC^{-\alpha}(\rho)}\|b\|_{\bC^{-\alpha}(\rho)}\|\phi\|_{\bC^{\alpha+\eps}(\bar\rho\rho^{-2})}.
	\end{align*}
\item  By \eqref{GZ3}, \eqref{GZ2}, \eqref{EG1} and \eqref{GZ1}, we have
	\begin{align*}
\|\psi((\phi\succcurlyeq b)\circ\nabla \sI_\lambda f)\|_{\bC^0(\bar\rho)}
&\lesssim\|\psi\|_{L^\infty}\|\phi\succcurlyeq b\|_{\bC^{\alpha-1+\eps}(\bar\rho\rho^{-1})}\|\nabla \sI_\lambda f\|_{\bC^{1-\alpha}(\rho)}
	\\&\lesssim\|\psi\|_{L^\infty}\|\phi\|_{\bC^{\alpha+\eps}(\bar\rho\rho^{-2})}\|b\|_{\bC^{-\alpha}(\rho)}\|f\|_{L_t^\infty\bC^{-\alpha}(\rho)}.
	\end{align*}
\item  By \eqref{GZ3} and \eqref{FA1}, we have
	\begin{align*}
\|\psi\textrm{com}(\phi,b,\nabla \sI_\lambda f)\|_{\bC^0(\bar\rho)}
\lesssim\|\psi\|_{L^\infty}\|\phi\|_{\bC^{2\alpha-1+\eps}(\bar\rho\rho^{-2})}\|b\|_{\bC^{-\alpha}(\rho)}\|f\|_{L^\infty_t\bC^{-\alpha}(\rho)}.
	\end{align*}
\item  By \eqref{GZ3}, we have
	\begin{align*}
\|\psi\phi( b\circ\nabla \sI_\lambda f)\|_{\bC^{1-2\alpha}(\bar\rho)}
\lesssim\|\psi\phi\|_{\bC^{2\alpha-1+\eps}(\bar\rho\rho^{-2})}\|b\circ\nabla \sI_\lambda f\|_{\bC^{1-2\alpha}(\rho^2)}.
	\end{align*}
\end{enumerate}
	Combining the above calculations, we obtain the desired estimate. 
\end{proof}
}
\section{A study of linear parabolic equation in weighted H\"older spaces}\label{linear}

In this section we consider the following linear parabolic equation:
\begin{align}\label{PDE7}
\sL_\lambda u=(\p_t-\Delta+\lambda) u=b\cdot\nabla u+f,\quad u(0)=u_0,
\end{align}
where $\lambda\geq 0$, $b=(b_1,\cdots, b_d)$ is a vector-valued distribution and $f$ is a scalar-valued distribution. 
Suppose that for some $\alpha\in(\frac{1}{2},\frac{2}{3})$ and admissible weight $\rho\in\sW$,
\begin{align}\label{DW9}
(b, f)\in  \mB^\alpha_T(\rho),\ \ T>0.
\end{align}
The aim of this section is to show the well-posedness of PDE \eqref{PDE7} under \eqref{DW9}. We first give the definition of the paracontrolled solutions to \eqref{PDE7}. We then establish the Schauder estimate with the coefficient in unweighted Besov space  by choosing $\lambda$ large enough. Then by a classical maximum principle, we obtain the Schauder estimate for \eqref{PDE7} depending polynomially on the coefficient.  In Section \ref{sec:3.3} we establish global well-posedness of equation \eqref{PDE7} under \eqref{DW9} and  obtain a uniform estimate of solution to \eqref{PDE7} in Besov spaces 
with sublinear weights.

\subsection{Paracontrolled solutions}
To introduce the paracontrolled solution of PDE \eqref{PDE7}, by Bony's decomposition,
we make the following paracontrolled ansatz as in \cite{GIP15}:
\begin{align}\label{DT11}
u=
\nabla u\Prec  \sI_\lambda b+u^\sharp+\sI_\lambda f,
\end{align}
where $u^\sharp$ solves the following PDE in weak sense
\begin{align}\label{DT110}
\sL_\lambda u^\sharp=&\nabla u\prec b-\nabla u\Prec b+\nabla u\succ b+b\circ\nabla u-[\sL_\lambda, \nabla u\Prec]\sI_\lambda b,
\\u^\sharp(0)=&u_0.\no
\end{align}
Note that $b\circ\nabla u$ does not make a sense, whose meaning is given as follows:
By \eqref{DT11}, we can write
\begin{align}
b\circ\nabla u&=b\circ\nabla(\nabla u\Prec  \sI_\lambda b)+b\circ\nabla u^\sharp+b\circ\nabla\sI_\lambda f\no\\
&=b\circ\nabla(\nabla u\prec \sI_\lambda b)+\textrm{com}_1+b\circ\nabla u^\sharp+b\circ\nabla\sI_\lambda f\no\\
&=b\circ(\nabla^2 u\prec \sI_\lambda b)+(b\circ\nabla \sI_\lambda b)\cdot \nabla u+\textrm{com}\no\\
&\quad+\textrm{com}_1+b\circ\nabla u^\sharp+b\circ\nabla\sI_\lambda f,\label{FQ2}
\end{align}
where
$$\textrm{com}_1:=b\circ\nabla [\nabla u\Prec \sI_\lambda b-\nabla u\prec \sI_\lambda b]$$
and
$$
\textrm{com}:=\mathrm{com}(\nabla u, \nabla \sI_\lambda b,b).
$$
\begin{definition}\label{def:para1} 
Let $\rho,\bar\rho\in\sW$ be two bounded admissible weights and $\eps\geq 0$.
For given $(b,f)\in\mB^\alpha_T(\rho)$, with notation \eqref{SS0}, a pair of functions 
\begin{align}\label{Reg}
(u, u^\sharp)\in \mS^{2-\alpha}_T(\bar\rho)\times \mS_T^{3-2\alpha}(\rho^{2+\eps}\bar\rho)
\end{align}
is called a paracontrolled solution of PDE \eqref{PDE7}  if
$(u,u^\sharp)$ satisfies \eqref{DT11} and \eqref{DT110} with $b\circ\nabla u$ given by \eqref{FQ2},
in the analytic weak sense.
\end{definition}

\br\rm
Under \eqref{Reg},
from the proof of Lemma \ref{Le32} below, 
each term in \eqref{FQ2} is well-defined.
Moreover, for $b, f\in L^\infty_T\sC^2(\rho)$ with $\rho(x)=\<x\>^{-1}$, it is well known
that PDE \eqref{PDE7} has a unique classical solution.
From Definition \ref{def:para1}, it is not hard to see that classical solutions are paracontrolled solutions. 
\er

The following lemma makes the above definition more transparent.

\bl\label{Le32}
Let $T, \eps\geq 0$ and $(u,u^\sharp)$ be a paracontrolled solution of \eqref{PDE7} in the sense of Definition \ref{def:para1}. 
For any $\gamma,\beta\in(\alpha,2-2\alpha]$, there is a constant $C>0$ depending only on $T,\eps,\alpha,\gamma,\beta,d, \rho,\bar\rho$ 
such that for all $\lambda\geq 0$ and $t\in[0,T]$,
\begin{align}
\|(b\circ\nabla u)(t)\|_{\bC^{1-2\alpha}(\rho^{2+\eps}\bar\rho)}&\lesssim_C 
\ell^b_t(\rho)\|u\|_{\mS^{\alpha+\gamma}_t(\bar\rho)}
+\sqrt{\ell^b_t(\rho)}\|u^\sharp(t)\|_{\bC^{\beta+1}(\rho^{1+\eps}\bar\rho)}\no\\
&\quad+\|(b\circ\nabla\sI_\lambda f)(t)\|_{\bC^{1-2\alpha}(\rho^{2+\eps}\bar\rho)}.\label{BV1}
\end{align}
\el
\begin{proof}
	 Below we drop the time variable $t$ and fix 
	$$
	\gamma,\beta\in(\alpha,2-2\alpha].
	$$
	Recall $1-2\alpha<0$. We now estimate each term in \eqref{FQ2} as following.
	\begin{enumerate}[$\bullet$]
		\item Since $\gamma>\alpha$, by \eqref{GZ1}, \eqref{GZ2} and \eqref{EG1}, we have
		\begin{align*}
		\|b\circ(\nabla^2 u\prec \sI_\lambda b)\|_{\bC^{1-2\alpha}(\rho^2\bar\rho)}
		&\lesssim
		\|b\|_{\bC^{-\alpha}(\rho)}\|\nabla^2 u\prec \sI_\lambda b\|_{\bC^{\gamma}(\rho\bar\rho)}\\
		&\lesssim  \|b\|_{\bC^{-\alpha}(\rho)}\|\nabla^2 u\|_{\bC^{\gamma+\alpha-2}(\bar\rho)}
		\|\sI_\lambda b\|_{\bC^{2-\alpha}(\rho)}\\
		&\lesssim  \|b\|^2_{L^\infty_t\bC^{-\alpha}(\rho)}\|u\|_{\bC^{\gamma+\alpha}(\bar\rho)}
		\lesssim \ell^b_t(\rho)\|u\|_{\bC^{\alpha+\gamma}(\bar\rho)}.
		\end{align*}
		\item By \eqref{GZ3}, we have
		\begin{align*}
		\|\nabla u(b\circ\nabla \sI_\lambda b)\|_{\bC^{1-2\alpha}(\rho^2\bar\rho)}
		&\lesssim  \|\nabla u\|_{\bC^{\gamma+\alpha-1}(\bar\rho)}\|b\circ\nabla \sI_\lambda b\|_{\bC^{1-2\alpha}(\rho^2)}\\
		&\lesssim \ell^b_t(\rho)\|u\|_{\bC^{\alpha+\gamma}(\bar\rho)}.
		\end{align*}
		\item Since 
		$\gamma>\alpha$, by \eqref{FA1} and \eqref{EG1}, we have
		\begin{align*}
		\|{\rm com}\|_{\bC^{1-2\alpha}(\rho^2\bar\rho)}&\lesssim \|b\|_{\bC^{-\alpha}(\rho)}
		\|\nabla u\|_{\bC^{\gamma+\alpha-1}(\bar\rho)}\|\nabla \sI_\lambda b\|_{\bC^{1-\alpha}(\rho)}\\
		&\lesssim\|b\|^2_{L^\infty_t\bC^{-\alpha}(\rho)}\|u\|_{\bC^{\gamma+\alpha}(\bar\rho)}
		\lesssim \ell^b_t(\rho)\|u\|_{\bC^{\alpha+\gamma}(\bar\rho)}.
		\end{align*}
		\item By Lemma \ref{lem:para}, \eqref{*} \eqref{GA4} and \eqref{EG1}, we have
		\begin{align*}
		\|\textrm{com}_1\|_{\bC^{1-2\alpha}(\rho^2\bar\rho)}
		&\lesssim \|b\|_{\bC^{-\alpha}(\rho)}
		\|\nabla u\Prec \sI_\lambda b-\nabla u\prec \sI_\lambda b\|_{\bC^{\gamma+1}(\rho\bar\rho)}\\
		&\lesssim \|b\|_{\bC^{-\alpha}(\rho)}
		\|\nabla u\|_{C^{(\gamma+\alpha-1)/2}_tL^\infty(\bar\rho)}\|\sI_\lambda b\|_{L^\infty_t\bC^{2-\alpha}(\rho)}\\
		&\lesssim \|b\|_{L^\infty_t\bC^{-\alpha}(\rho)}^2\|u\|_{\mS^{\alpha+\gamma}_t(\bar\rho)}
		\lesssim \ell^b_t(\rho)\|u\|_{\mS^{\alpha+\gamma}_t(\bar\rho)}.
		\end{align*}
		\item Since $\beta>\alpha$, by \eqref{GZ2}, we have
		\begin{align*}
		\|b\circ\nabla u^\sharp\|_{L^\infty(\rho^{2+\eps}\bar\rho)}
		&\lesssim\|b\|_{\bC^{-\alpha}(\rho)}\|\nabla u^\sharp\|_{\bC^{\beta}(\rho^{1+\eps}\bar\rho)}
		\leq\sqrt{\ell^b_t(\rho)}\|u^\sharp\|_{\bC^{\beta+1}(\rho^{1+\eps}\bar\rho)}.
		\end{align*}
	\end{enumerate}
	Combining the above calculations and by \eqref{FQ2}, we obtain the estimate. 	
\end{proof}


\subsection{Schauder's estimate for paracontrolled solutions without weights}
\label{sub:Schauder}

In this section we assume $(b,f)\in\mB^\alpha_T$, and for simplicity, we shall write
$$
\ell^b_T=\ell^b_T(1),\ \ \mA^{b,f}_{T,q}=\mA^{b,f}_{T,q}(1).
$$

\bl\label{Le11a} Assume $u_0=0$.
For any  $\theta\in (1+\tfrac{3\alpha}{2},2)$, $q\in(\frac{2}{2-\theta},\infty)$ and $T>0$, there exist constants $c_0, c_1>0$ only depending on $\theta,\alpha,d,q,T$ 
such that for all $\lambda\geq c_0(\ell^b_T)^{1/(1-\frac{\theta}{2}-\frac{1}{q})}$ and any paracontrolled solution $u_\lambda=u$ to PDE \eqref{PDE7},
\begin{align}\label{PP7}
\|u_\lambda\|_{\mS^{\theta-\alpha}_T}\leq c_1\mA^{b,f}_{T,q}.
\end{align}
Moreover,  there is a constant $c_2>0$ such that for all $\lambda\geq 0$,
\begin{align}\label{PP6}
\|u_\lambda\|_{\mS^{2-\alpha}_T}+\|u^\sharp_\lambda\|_{\mS^{3-2\alpha}_T}
\leq c_2(\ell^b_T)^{\frac{4}{2-3\alpha}}\Big(\|u_\lambda\|_{\mL_T^\infty}+\mA^{b,f}_{T,\infty}\Big).
\end{align}
\el
\begin{proof}
	Below we fix
	$$
	\theta\in(1+\tfrac{3\alpha}{2},2],\ \ q\in[\tfrac{2}{2-\theta},\infty],\ \gamma,\beta\in(\alpha, \theta-2\alpha].
	$$ 
	By \eqref{EG1}, \eqref{GZ1} and \eqref{GZ0}, we clearly have
	\begin{align}\label{GA11}
	\begin{split}
	(\lambda\vee 1)^{1-\frac{\theta}{2}-\frac{1}{q}}\|u\|_{\mS^{\theta-\alpha}_T}
	&\lesssim\|b\prec\nabla u+b\succ\nabla u+b\circ\nabla u+f\|_{L^q_T\bC^{-\alpha}}
	\\&\lesssim \|b\|_{L^\infty_T\bC^{-\alpha}}\|\nabla u\|_{L^q_TL^\infty}
	+\|b\circ\nabla u\|_{L^q_T\bC^{-\alpha}}+\mA^{b,f}_{T,q},
	\end{split}
	\end{align}
	and by Lemma \ref{lem:5.1},
	\begin{align*}
	(\lambda\vee 1)^{1-\frac{\theta}{2}-\frac{1}{q}}\|u^\sharp\|_{\mS^{\theta+\gamma-1}_T}
	&\lesssim \|\nabla u\prec b-\nabla u\Prec b\|_{L^\infty_T\bC^{\gamma-1}}+\|\nabla u\succ b\|_{L^\infty_T\bC^{\gamma-1}}\\
	&\quad+\|[\sL_\lambda, \nabla u\Prec]\sI_\lambda b\|_{L^\infty_T\bC^{\gamma-1}}+\|b\circ\nabla u\|_{L^q_T\bC^{\gamma-1}}\\
	&\lesssim \|u\|_{\mS^{\gamma+\alpha}_T}\|b\|_{L^\infty_T\bC^{-\alpha}}+\|b\circ\nabla u\|_{L^q_T\bC^{1-2\alpha}},
	\end{align*}
	where we used \eqref{*}, \eqref{GA4}, \eqref{GA44} and \eqref{GZ1} in the second inequality. 
	Moreover, by \eqref{BV1}, we also have
$$
\|b\circ\nabla u\|_{L^q_T\bC^{1-2\alpha}}\lesssim 
\ell^b_T\|u\|_{\mS^{\gamma+\alpha}_T}
+\sqrt{\ell^b_T}\|u^\sharp\|_{L^q_T\bC^{\beta+1}}+\mA^{b,f}_{T,q}.
$$
Thus, we obtain that for all $\lambda\geq 0$,
	\begin{align}\label{GA111}
	\begin{split}
	&(\lambda\vee 1)^{1-\frac{\theta}{2}-\frac{1}{q}}\left(\|u\|_{\mS^{\theta-\alpha}_T}+\|u^\sharp\|_{\mS^{\theta+\gamma-1}_T}\right)
	\\&\lesssim \ell^b_T\|u\|_{\mS^{\gamma+\alpha}_T}
	+\sqrt{\ell^b_T}\|u^\sharp\|_{L^\infty_T\bC^{\beta+1}}+\mA^{b,f}_{T,q}.
	\end{split}
	\end{align}
	In particular, letting $\gamma=\theta-2\alpha$ and $\beta=2\theta-2\alpha-2$, we get
	for some $c=c(\theta,\alpha,d,q,T)$,
	\begin{align*}
	(\lambda\vee 1)^{1-\frac{\theta}{2}-\frac{1}{q}}\left(\|u\|_{\mS^{\theta-\alpha}_T}+\|u^\sharp\|_{\mS^{2\theta-2\alpha-1}_T}\right)
	\lesssim_c\ell^b_T\Big(\|u\|_{\mS^{\theta-\alpha}_T}+\|u^\sharp\|_{\mS^{2\theta-2\alpha-1}_T}\Big)
	+\mA^{b,f}_{T,q}.
	\end{align*}
	Choosing $\lambda$ such that $\lambda^{1-\frac{\theta}{2}-\frac{1}{q}}\geq c\ell^b_T$, we obtain \eqref{PP7}.
	
	On the other hand, letting $\theta=2$ and $q=\infty$ in \eqref{GA111}, 
	we obtain that for any $\gamma,\beta\in(\alpha, 2-2\alpha]$,
	\begin{align}\label{Z1}
	\|u\|_{\mS^{2-\alpha}_T}+\|u^\sharp\|_{\mS^{1+\gamma}_T}
	&\lesssim \ell^b_T\| u\|_{\mS^{\gamma+\alpha}_T}
	+\sqrt{\ell^b_T}\|u^\sharp\|_{L^\infty_T\bC^{\beta+1}}+\mA^{b,f}_{T,\infty}.
	\end{align}
	If $\alpha<\beta<\gamma<2-2\alpha$, then by \eqref{AM1} and Young's inequality, we have for any $\eps\in(0,1)$,
	\begin{align}\label{HM1}
	\begin{split}
	\|u\|_{\mS^{2-\alpha}_T}+\|u^\sharp\|_{\mS^{1+\gamma}_T}
	&\leq \eps\Big(\|u\|_{\mS^{2-\alpha}_T}+\|u^\sharp\|_{\mS_T^{1+\gamma}}\Big)
	+C_\eps(\ell^b_T)^{\frac{2-\alpha}{2-\gamma-2\alpha}}\|u\|_{\mL^\infty_T}
	\\&\quad+C_\eps (\ell^b_T)^{\frac{1+\gamma}{2(\gamma-\beta)}}\|u^\sharp\|_{\mL^\infty_T}+C\mA^{b,f}_{T,\infty}.
	\end{split}
	\end{align}
	Note that by \eqref{DT11},
	\begin{align*}
	\|u^\sharp\|_{\mL^\infty_T}
	&=\|u-\nabla u\Prec  \sI_\lambda b-\sI_\lambda f\|_{\mL^\infty_T}
	\\&\lesssim \|u\|_{\mL^\infty_T}(1+\|b\|_{L^\infty_T\bC^{-\alpha}})+\|f\|_{L_T^\infty\bC^{-\alpha}}\lesssim \|u\|_{\mL^\infty_T}\sqrt{\ell^b_T}+\mA^{b,f}_{T,\infty}.
	\end{align*}
	Substituting it into \eqref{HM1} and taking $\eps=1/2$, we obtain
	\begin{align*}
	\|u\|_{\mS^{2-\alpha}_T}+\|u^\sharp\|_{\mS^{1+\gamma}_T}\lesssim(\ell^b_T)^{\frac{2-\alpha}{2-\gamma-2\alpha}\vee(\frac{1+\gamma}{2(\gamma-\beta)}+\frac{1}{2})}
	\Big(\|u\|_{\mL^\infty_T}+\mA^{b,f}_{T,\infty}\Big),
	\end{align*}
	which, by choosing $\gamma=2/3$ and $\beta$ close to $\alpha$, yields that
	$$
	\|u\|_{\mS^{2-\alpha}_T}+\|u^\sharp\|_{\mS^{5/3}_T}\lesssim(\ell^b_T)^{\frac{8-3\alpha}{2(2-3\alpha)}}
	\Big(\|u\|_{\mL_T^\infty}+\mA^{b,f}_{T,\infty}\Big).
	$$
	Moreover, by \eqref{Z1} with $\gamma=2-2\alpha$ and $\beta=2/3$, we get
	$$
	\|u^\sharp\|_{\mS^{3-2\alpha}_T}
	\lesssim \ell^b_T\| u\|_{\mS^{2-\alpha}_T}
	+\sqrt{\ell^b_T}\|u^\sharp\|_{\mS^{5/3}_T}+\mA^{b,f}_{T,\infty}
	\lesssim (\ell^b_T)^{\frac{4}{2-3\alpha}}\Big(\|u\|_{\mL_T^\infty}+\mA^{b,f}_{T,\infty}\Big).
	$$
	The proof is complete. 
\end{proof}

\bt\label{Th12} 
Let $T>0$ and $u_0=0$. For any $(b,f)\in\mB^\alpha_T$, there is a unique paracontrolled solution $u$ to PDE \eqref{PDE7}
in the sense of Definition \ref{def:para1}. Moreover, 
there are $q>1$ large enough only depending on $\alpha$ and $c_1,c_2>0$ such that
$$
\|u\|_{\mL^\infty_T}\leq c_1(\ell^b_T)^{\frac{5}{2-3\alpha}}\mA^{b,f}_{T,q},\ \ 
\|u\|_{\mS^{2-\alpha}_T}+\|u^\sharp\|_{\mS^{3-2\alpha}_T}\leq c_2(\ell^b_T)^{\frac{9}{2-3\alpha}}\mA^{b,f}_{T,\infty}.
$$
\et
\begin{proof}
	We first assume that
	$$
	b,f\in L^\infty_T \sC^2,\ \ \forall T>0.
	$$
	Fix $\lambda\geq 0$. For any $\lambda'>0$, it is well known that there is a unique classical solution $w$ to the following PDE:
	\begin{align}\label{EQ1}
	\p_tw=\Delta w-(\lambda'+\lambda) w+b\cdot\nabla w+f,\ \ w(0)=0.
	\end{align}
	In particular, for any $\theta\in(1+\frac{3}{2}\alpha,2)$ and $q\in(\frac{2}{2-\theta},\infty)$,
	by \eqref{PP7}, we have for $\lambda'\geq c_0(\ell^b_T)^{1/(1-\frac{\theta}{2}-\frac{1}{q})}$, 
	$$
	\|w\|_{\mL^\infty_T}\leq\|w\|_{L_T^\infty\bC^{\theta-\alpha}}\leq c_1 \cdot\mA^{b,f}_{T,q}.
	$$
	Now let $u$ be the unique classical solution to PDE \eqref{PDE7} with $u_0=0$.
	Let $\bar u=u-w$. Then $\bar u$ solves the following PDE:
	$$
	\p_t\bar u=\Delta \bar u-\lambda\bar{u}+b\cdot\nabla \bar u+\lambda' w,\ \ \bar u(0)=0.
	$$ 
	By the classical maximum principle, we have
	$$
	\|\bar u\|_{\mL^\infty_T}\leq \lambda' T\|w\|_{\mL^\infty_T}.
	$$
	Hence, by taking $\theta$ close to $1+\frac{3\alpha}{2}$ and $q$ large enough, we obtain 
	$$
	\|u\|_{\mL^\infty_T}\leq (\lambda' T+1)\|w\|_{\mL^\infty_T}\lesssim (\ell^b_T)^{1/(1-\frac{\theta}{2}-\frac1q)}\cdot\mA^{b,f}_{T,q}
	\lesssim (\ell^b_T)^{\frac{5}{2-3\alpha}}\cdot\mA^{b,f}_{T,q},
	$$
	which together with \eqref{PP6} yields
	\begin{align}\label{PP9}
	\|u\|_{\mS^{2-\alpha}_T}+\|u^\sharp\|_{\mS^{3-2\alpha}_T}\leq c_2(\ell^b_T)^{\frac{9}{2-3\alpha}}\mA^{b,f}_{T,\infty}.
	\end{align}
	{\bf (Existence)} Let $b_n$ and $f_n$ be the smoothing approximations of $b$ and $f$ in $\mB^\alpha_T$.
	We consider the following approximation equation:
	$$
	\p_t u_n=\Delta u_n-\lambda u_n+b_n\cdot\nabla u_n+f_n,\quad u_n(0)=0.
	$$
	By the assumption and \eqref{PP9}, we have the following uniform estimate:
	$$
	\sup_{n\in\mN}\Big(\|u_n\|_{\mS^{2-\alpha}_T}+\|u^\sharp_n\|_{\mS^{3-2\alpha}_T}\Big)\lesssim1.
	$$
	Using this uniform estimate and by a standard compact and weak convergence method, 
	we can show the existence of a paracontrolled solution (see \cite{GH18}).
	\\
	\\
	{\bf (Uniqueness)} Let $u_1$ and $u_2$ be two paracontrolled solution of PDE \eqref{PDE7}.
	Let $\bar u:=u_1-u_2$. Clearly, $\bar u$ is a paracontrolled solution of 
	$$
	\p_t\bar u=\Delta\bar u-\lambda\bar u+b\cdot\nabla\bar u,\ \ u(0)=0.
	$$
Let $\theta\in(1+\alpha,2)$ and $q=\frac{2}{2-\theta}$. By \eqref{EG1}, we have
\begin{align}\label{GFa}
\|\bar u\|^q_{\mS^{\theta-\alpha}_T}\leq C\int^T_0\|(b\cdot\nabla\bar u)(t)\|^q_{\bC^{-\alpha}}\dif t.
\end{align}
On the other hand, by \eqref{GZ0}, \eqref{GZ1} and Lemma \ref{Le32} we have
\begin{align*}
\|(b\cdot\nabla\bar u)(t)\|_{\bC^{-\alpha}}
&\leq \|(b\prec\nabla\bar u)(t)\|_{\bC^{-\alpha}}+\|(b\succ\nabla\bar u)(t)\|_{\bC^{-\alpha}}+\|(b\circ\nabla\bar u)(t)\|_{\bC^{-\alpha}}
\\&\lesssim \|b(t)\|_{\bC^{-\alpha}}\|\nabla\bar u(t)\|_{L^\infty}+\|(b\circ\nabla\bar u)(t)\|_{\bC^{1-2\alpha}}
\\&\lesssim \|\nabla\bar u(t)\|_{L^\infty}+
\|\bar u\|_{\mS^{2-\alpha}_t}+\|\bar u^\sharp\|_{L^\infty_t\bC^{3-2\alpha}}
\stackrel{\eqref{PP6}}{\lesssim} \|\nabla\bar u\|_{\mL^\infty_t}+\|\bar u\|_{\mL^\infty_t}.
\end{align*}
Substituting this into \eqref{GFa} and by $\theta-\alpha>1$, we obtain
\begin{align*}
\|\bar u\|^q_{L^\infty_T\bC^{\theta-\alpha}}\leq C\int^T_0\|\bar u\|^q_{L_t^\infty\bC^{\theta-\alpha}}\dif t,
\end{align*}
which in turn implies that $\bar u=0$. The uniqueness is proven. 
\end{proof}
\br\rm
The polynomial dependence on  $\ell^b_T$ in Theorem \ref{Th12} is important to establish the Schauder estimate in sublinear weighted H\"older space
since it together with a new characterization for weighted H\"older spaces in Lemma \ref{cha} below can be used to solve 
the problem of weight loss (see \cite[Remark 1.1]{PR18}).
\er
\subsection{Schauder estimate for paracontrolled solutions with weights}\label{sec:3.3}

In this section we show the well-posednness of PDE \eqref{PDE7} in weighted H\"older spaces. Recall that for $\delta\in\mR$,
$$
\rho_\delta(x):=(1+|x|^2)^{-\delta/2}=:\<x\>^{-\delta}.
$$
Now we give the main result of this section. 
\bt\label{Th33}
Let $\alpha\in(\frac{1}{2},\frac{2}{3})$ and $\vartheta:=\frac{9}{2-3\alpha}$. Choose $\kappa>0$ so that
$$
\delta:=(2\vartheta+2)\kappa\leq 1,\ \ \delta_0:=(\frac{55}{27}\vartheta+4)\kappa.
$$ 
For any $T>0$, $(b,f)\in\mB^\alpha_T(\rho_\kappa)$ and
$u_0\in\cup_{\eps>0}\bC^{1+\alpha+\eps}$, there is a unique paracontrolled solution $(u, u^\sharp)$ 
to PDE \eqref{PDE7} in the sense of Definition \ref{def:para1} with
\begin{align}\label{MN1}
\|u\|_{\mS^{2-\alpha}_T(\rho_\delta)}+\|u^\sharp\|_{\mS^{3-2\alpha}_T(\rho_{\delta_0})}
\lesssim_C\mA^{b,f}_{T,\infty}(\rho_\kappa),
\end{align}
where $C=C(T,d,\alpha,\kappa,\ell^b_T(\rho_\kappa))>0$.
\et

To prove the result we introduce the following notations.
Let $\chi\in C^\infty_c(\mR^d)$ with 
$$
\chi(x)=1,\ \ |x|\leq 1/8,\ \ \ \chi(x)=0,\  \ |x|>1/4,
$$
and for $r>0$ and $z\in\mR^d$,
$$
\chi^z_r(x):=\chi((x-z)/r),\ \ \phi^z_r(x):=\chi^z_{r(1+|z|)}(x).
$$
To show the existence of a paracontrolled solution,
we need the following simple characterization of weighted H\"older spaces.
\bl\label{cha}
Let $\alpha\geq 0$ and $r\in(0,1]$. For any $\delta,\kappa\in\mR$, there is a constant $C=C(r,\alpha,d,\delta,\kappa)>0$ such that
\begin{align}\label{GD2}
\|f\|_{\sC^\alpha(\rho_\delta\rho_\kappa)}
\asymp_C\sup_{z}\left(\rho_\delta(z)\|\phi^z_r f\|_{\sC^\alpha(\rho_\kappa)}\right).
\end{align}
\el
\begin{proof}
	Without loss of generality, we assume $\kappa=0$.
	In fact, we clearly have
	\begin{align*}
	\sup_{z}\left(\rho_\delta(z)\|\phi^z_r f\|_{\sC^\alpha(\rho_\kappa)}\right)
	\asymp\sup_{z}\left(\rho_\delta(z)\|\phi^z_r\rho_\kappa f\|_{\sC^\alpha}\right)
	\asymp\|\rho_\delta\rho_\kappa f\|_{\sC^\alpha}.
	\end{align*}
	By interpolation theorem  (see e.g. \cite[Theorem 3.11.8, Theorem 6.2.4]{BL76}), it suffices to prove \eqref{GD2} for $\alpha\in\mN_0$.
	We first consider the case $\alpha=0$. We use $B_r(z)$  to denote the ball with radius $r$ centered at $z$. 
For any $\delta\in\mR$, since for $x\in B_{(1+|z|)/2}(z)$,
	$$
	\rho^{-1}_\delta(x)\leq 2^{|\delta|}(1+|x|)^{\delta}\leq 4^{|\delta|}(1+|z|)^\delta=4^{|\delta|}\rho^{-1}_\delta(z),
	$$
	we have
	$$
	\rho_\delta(z)\phi^z_r(x) |f(x)|\leq 4^{|\delta|}\rho_\delta(x) |f(x)|\leq 4^{|\delta|}\|\rho_\delta f\|_{L^\infty}.
	$$
	Hence,
	\begin{align}\label{GQ1}
	\sup_z\left(\rho_\delta(z)\|\phi^z_rf\|_{L^\infty}\right)\leq 4^{|\delta|}\|\rho_\delta f\|_{L^\infty}=4^{|\delta|}\|f\|_{L^\infty(\rho_\delta)}.
	\end{align}
	On the other hand, since $\phi^x_r(x)=1$, we clearly have
	\begin{align}\label{GQ11}
	\|\rho_\delta f\|_{L^\infty}=\sup_x|\rho_\delta(x) \phi^x_r(x)f(x)|\leq\sup_z\left(\rho_\delta(z)\|\phi^z_rf\|_{L^\infty}\right).
	\end{align}
	For $\alpha=1$,  note that by \eqref{GQ1},
	\begin{align*}
	&\sup_z\left(\rho_\delta(z)\|\nabla(\phi^z_r f)\|_{L^\infty}\right)
	\leq \sup_z\left(\rho_\delta(z)(\|\nabla\phi^z_r f\|_{L^\infty}+\|\phi^z_r\nabla f\|_{L^\infty})\right)\\
	&\qquad\qquad\lesssim \sup_z\left(\rho_\delta(z)(\|\phi^z_{2r} f\|_{L^\infty}+\|\phi^z_r\nabla f\|_{L^\infty})\right)\\
	&\qquad\qquad\lesssim \|\rho_\delta f\|_{L^\infty}+\|\rho_\delta\nabla f\|_{L^\infty}\lesssim \|f\|_{\sC^1(\rho_\delta)}.
	\end{align*}
	Moreover, by \eqref{GQ11},
	$$
	\|\rho_\delta \nabla f\|_{L^\infty}\lesssim\sup_z\left(\rho_\delta(z)\|\phi^z_{r/2}\nabla f\|_{L^\infty}\right)
	\lesssim\sup_z\left(\rho_\delta(z)\|\phi^z_r f\|_{\sC^1}\right).
	$$
	Thus \eqref{GD2} holds for $\alpha=1$. For $\alpha=2,\cdots$, it follows by similar calculations.
\end{proof}
The key point of using $\phi^z_r$ is the following simple fact that for any $m\in\mN_0$,
\begin{align}\label{DC9}
\|\nabla \phi_r^z\|_{\sC^m}\lesssim (1+|z|)^{-1}\Rightarrow \sup_z\|\nabla \phi_r^z\|_{\sC^m(\rho^{-1}_1)}<\infty,
\end{align}
where we used $1+|x|\lesssim 1+|z|$ on the support of $\phi_r^z$. 
This provides an extra weight and helps us to obtain the a-priori estimate for the solutions in Besov space with polynomial weight. 

Now we can give

\begin{proof}[Proof of Theorem \ref{Th33}] {\bf (Existence).}
	 Without loss of generality we may assume $\lambda=0$ and $u_0=0$.
	 In fact, for general initial data $u_0\in\cup_{\eps>0}\bC^{1+\alpha+\eps}$, 
	 by considering $\bar u=u-u_0$, we can reduce the nonzero initial value to zero initial value with $f$ replaced by 
	 $\bar f=f+\Delta u_0+b\cdot\nabla u_0\in \bC^{-\alpha}(\rho_\kappa)$. In this case, by Lemma \ref{lem:para},
	 \begin{align*}
	 \|b\circ\nabla \sI(\Delta u_0)\|_{L^\infty_T\bC^\eps(\rho_\kappa)}\lesssim1,
	 \end{align*}
	 and by  Lemma \ref{le:loc} with $\psi=\nabla u_0, f=b, \phi=1, \bar{\rho}=\rho_{2\kappa}, \rho=\rho_\kappa$,
	 \begin{align*}
	 &\|b\circ\nabla \sI(b\cdot \nabla u_0)\|_{L^\infty_T\bC^{1-2\alpha}(\rho_{2\kappa})}
	 \lesssim1.
	 \end{align*}
	 Hence, we still have
	 $$
	 (b,\bar f)\in\mB^\alpha_T(\rho_\kappa).
	 $$

	Now, let $T>0$ and $b_n, f_n\in L^\infty_T\sC^\infty(\rho_\kappa)$ be as in the definition of $\mB^\alpha_T(\rho_\kappa)$.
	For every $n$,  define
	$$
	\bar b_n(t,x):=b_n(t,x)\chi_n(x),\ \ \bar f_n(t,x):=f_n(t,x)\chi_n(x),
	$$
	with $\chi_n$ being the usual cut-off functions. 
	It is well known that there is a unique classical solution $ u_n\in L^\infty_T\sC^2$ solving \eqref{PDE7} with $(b,f)=(\bar b_n,\bar f_n)$. 
Our main aim is to show that there is a constant $C>0$ independent of $n$ such that
\begin{align}\label{Un}
\|u_n\|_{\mS^{2-\alpha}_T(\rho_\delta)}+\|u^\sharp_n\|_{\mS^{3-2\alpha}_T(\rho_{\delta_0})}
	\lesssim_C \mA^{\bar b_n,\bar f_n}_{T,\infty}(\rho_\kappa)
\end{align}
	On the other hand, by  \eqref{EK0} with $\bar\rho=\rho^2=\rho_{2\kappa}$ and $\phi=\psi=\chi_n$, 
	we also have for some $C$ independent of $n$,
	$$
	\mA^{\bar b_n,\bar f_n}_{T,\infty}(\rho_\kappa)\lesssim_C\mA^{b_n,f_n}_{T,\infty}(\rho_\kappa),\quad \ell^{\bar b_n}_{T}(\rho_\kappa)\lesssim_C\ell_T^{b_n}(\rho_\kappa).
	$$
Hence,
$$
\sup_n\Big(\|u_n\|_{\mS^{2-\alpha}_T(\rho_\delta)}+\|u^\sharp_n\|_{\mS^{3-2\alpha}_T(\rho_{\delta_0})}\Big)<\infty.
$$
Thus, by a standard compact argument, we can show the existence of a paracontrolled solution (see \cite{GH18}).
	
	In the following, we devote to proving \eqref{Un}. For simplicity, we drop the bar and subscript $n$ and assume $b,f\in L^\infty_T\sC^2$. We fix $0<r<1/2$. 
	Note that $\phi^z_{2r}=1$ on the support of $\phi^z_r$. 
	For each $z\in\mR^d$,
	it is easy to see that $u_z:=u\phi^z_r$ satisfies the following PDE:
	$$
	\p_t u_z=\Delta u_z+b_z\cdot\nabla u_z+F_z,\ \ u_z(0)=0,
	$$
	where $b_z:=b\phi^z_{2r}$ and 
	$$
	F_z:=f\phi^z_r-2\nabla u\cdot\nabla\phi^z_r-u\Delta\phi^z_r-b\cdot\nabla\phi^z_ru.
	$$
	Let $q$ be the same as in Theorem \ref{Th12}.
	By Theorem \ref{Th12}, there are two constants $c_1,c_2>0$ such that for all $z\in\mR^d$,
	\begin{align}\label{HG7}
	\|u_z\|_{\mS^{2-\alpha}_T}
	\leq c_1(\ell^{b_z}_T)^{\vartheta}\mA^{b_z,F_z}_{T,\infty},\ \|u_z\|_{\mL^\infty_T}\leq c_2(\ell^{b_z}_T)^{\vartheta}\mA^{b_z,F_z}_{T,q}.
	\end{align}
	Let $\eps>0$ be small enough. By the definition of $F_z$, using $\phi_{2r}^z\nabla \phi_r^zu=\nabla \phi_r^zu $  and \eqref{GZ3}, we have
	\begin{align}
	\|F_z\|_{\bC^{-\alpha}}&\leq \|f\phi^z_r\|_{\bC^{-\alpha}}+2\|\nabla u\cdot\nabla\phi^z_r\|_{L^\infty}
	+\|u\Delta\phi^z_r\|_{L^\infty}+\|b\cdot\nabla\phi^z_ru\|_{\bC^{-\alpha}}
	\no\\&\lesssim\|f\|_{\bC^{-\alpha}(\rho_\kappa)}\|\phi^z_r\|_{\bC^{\alpha+\eps}(\rho^{-1}_\kappa)}+\|\nabla u\|_{L^\infty(\rho_1)}
	\|\nabla\phi^z_r\|_{L^\infty(\rho^{-1}_1)}
	\no\\&\quad+\|u\|_{L^\infty(\rho_1)}\|\Delta\phi^z_r\|_{L^\infty(\rho^{-1}_1)}
	+\|b\|_{\bC^{-\alpha}(\rho_\kappa)}\|\nabla\phi^z_ru\|_{\bC^{\alpha+\eps}(\rho_\kappa^{-1})}
	\no\\&\lesssim\|f\|_{\bC^{-\alpha}(\rho_\kappa)}\|\phi^z_r\|_{\sC^1(\rho^{-1}_\kappa)}+\|u\|_{\sC^1(\rho_1)}\|\nabla\phi^z_r\|_{\sC^1(\rho^{-1}_1)}\label{DB9}
	\\&\quad	+\|b\|_{\bC^{-\alpha}(\rho_\kappa)}\|u\|_{\sC^1(\rho_1)}\|\nabla\phi^z_r\|_{\sC^1(\rho^{-1}_1)}
	\|\phi^z_{2r}\|_{\sC^1(\rho_\kappa^{-1})},\no
	\end{align}
	and also,
	\begin{align*}
	\|(b_z\circ\nabla \sI_\lambda F_z)\|_{\bC^{1-2\alpha}}&\leq
	\|b_z\circ\nabla \sI_\lambda({f}\phi^z_r)\|_{\bC^{1-2\alpha}}+\|b_z\circ\nabla \sI_\lambda(b\cdot\nabla \phi^z_ru )\|_{\bC^{1-2\alpha}}
	\\&+\|b_z\circ\nabla \sI_\lambda(u\Delta\phi^z_r+2\nabla u\cdot\nabla\phi^z_r)\|_{L^\infty}=:I_1^z+I_2^z+I_3^z.
	\end{align*}
	For $I^z_1$, by \eqref{EK0} with $\bar\rho\equiv 1$, $\rho=\rho_\kappa$ and $\psi=\phi^z_r$, we have
	\begin{align*}
	I^z_1\lesssim \|\phi^z_{2r}\|_{\bC^{\alpha+\eps}(\rho_\kappa^{-2})}\|\phi^z_r\|_{\bC^{\alpha+\eps}}\mA^{b,f}_{t,\infty}(\rho_\kappa)
	\lesssim \|\phi^z_{2r}\|_{\sC^1(\rho_\kappa^{-2})}\mA^{b,f}_{t,\infty}(\rho_\kappa).
	\end{align*}
	For $I^z_2$, by \eqref{EK0} with $\bar\rho\equiv 1$, $\rho=\rho_\kappa$ and $\psi=\nabla \phi^z_r u$, we have
	\begin{align*}
	I^z_2&\lesssim \|\phi^z_{2r}\|_{\bC^{\alpha+\eps}(\rho_\kappa^{-2})}\|\nabla \phi^z_r u\|_{\mS_t^{\alpha+\eps}}\mA^{b,b}_{t,\infty}(\rho_\kappa)
	\\&\lesssim \|\phi^z_{2r}\|_{\sC^1(\rho_\kappa^{-2})}\|\nabla \phi^z_r\|_{\sC^1(\rho^{-1}_1)}\|u\|_{\mS_t^1(\rho_1)}\ell^{b}_t(\rho_\kappa).
	\end{align*}
	For $I^z_3$, as in \eqref{DB9}, we have
	\begin{align*}
	I^z_3&\lesssim \|b_z\|_{\bC^{-\alpha}}\|\nabla \sI_\lambda(u\Delta\phi^z_r+2\nabla u\cdot\nabla\phi^z_r)\|_{\bC^{\alpha+\eps}}
	\\&\lesssim \|b\|_{\bC^{-\alpha}(\rho_\kappa)}\|\phi^z_{2r}\|_{\bC^{\alpha+\eps}(\rho^{-1}_\kappa)}\|u\Delta\phi^z_r+2\nabla u\cdot\nabla\phi^z_r\|_{L_t^\infty L^\infty}
	\\&\lesssim \|b\|_{\bC^{-\alpha}(\rho_\kappa)}\|\phi^z_{2r}\|_{\sC^1(\rho^{-1}_\kappa)}
	\|u\|_{L_t^\infty\sC^1(\rho_1)}\|\nabla\phi^z_r\|_{\sC^1(\rho^{-1}_1)}.
	\end{align*}
	Combining the above calculations, by the definition of $\mA^{b_z,F_z}_{T,q}$ and \eqref{DC9}, we get
	\begin{align*}
	\mA^{b_z,F_z}_{T,q}
	&=\sup_\lambda\|b_z\circ \nabla \sI_\lambda F_z\|_{L^q_T\bC^{1-2\alpha}}+\|b_z\|_{L^\infty_T\bC^{-\alpha}}\|F_z\|_{L^q_T\bC^{-\alpha}}
	\\&\lesssim\Big(\|\phi^z_{2r}\|_{\sC^1(\rho_\kappa^{-2})}+\|\phi_{2r}^z\|_{\sC^1(\rho_\kappa^{-1})}(\|\phi_r^z\|_{\sC^1(\rho_\kappa^{-1})}+\|\phi_{2r}^z\|_{\sC^1(\rho_\kappa^{-1})})\Big)
	\\\qquad\qquad&\qquad\times\left(\mA^{b,f}_{T,\infty}(\rho_\kappa)+\ell_T^b(\rho_\kappa)\left(\int^T_0\|u\|_{\mS^{2\alpha}_t(\rho_1)}^q\dif t\right)^{1/q}\right),
	\end{align*}
	where we have used
	\begin{align*}
	\|b_z\|_{L^\infty_T\bC^{-\alpha}}\lesssim \|b\|_{L_T^\infty\bC^{-\alpha}(\rho_\kappa)}\|\phi_{2r}^z\|_{\sC^1(\rho_\kappa^{-1})},
	\end{align*}
	and by \eqref{DB9} and \eqref{DC9},
	\begin{align*}
\|F_z\|_{L^q_T\bC^{-\alpha}}&\lesssim 
	\|f\|_{L_T^q\bC^{-\alpha}(\rho_\kappa)}\|\phi_r^z\|_{\sC^1(\rho_\kappa^{-1})}\\&+\Big(1+\ell^b_T(\rho_\kappa)
	\|\phi_{2r}^z\|_{\sC^1(\rho_\kappa^{-1})}\Big)\left(\int_0^T\|u(t)\|_{\sC^1(\rho_1)}^q\dif t\right)^{1/q}.
	\end{align*}
	By Lemma \ref{cha}, we have
	$$\sup_z\rho_\kappa(z)\|\phi_{2r}^z\|_{\sC^1(\rho_\kappa^{-1})}\lesssim1.$$
	On the other hand, by Lemma \ref{cha} and \eqref{EK0} with $\bar\rho=1, \bar\rho=\rho_\kappa$, we have
	 \begin{align*}\sup_z\rho^2_{\kappa}(z)\ell_T^{b_z}
	 \lesssim \sup_z\rho^2_{\kappa}(z)(\|\phi_{2r}^z\|_{\sC^1(\rho_\kappa^{-2})}+\|\phi_{2r}^z\|_{\sC^1(\rho_\kappa^{-1})}^2)\ell_T^b(\rho_\kappa)
	 \lesssim\ell_T^b(\rho_\kappa),\end{align*}
	 which together with the above  estimate implies that for $\delta=(2\vartheta+2)\kappa\leq 1$,
	\begin{align*}
	&\sup_z\rho_\delta(z)(\ell_T^{b_z})^{\vartheta}\mA^{b_z,F_z}_{T,q}
	\leq \Big(\sup_z\rho_\kappa^2(z)\ell^{b_z}_T\Big)^{\vartheta}\sup_z\rho^2_\kappa(z)\mA^{b_z,F_z}_{T,q}
	\\&\quad\leq \big(\ell^{b}_T(\rho_\kappa)\big)^{\vartheta+1}
	\left(\mA^{b,f}_{T,\infty}(\rho_\kappa)+\left(\int^T_0\|u\|_{\mS^{2\alpha}_t(\rho_1)}^q\dif t\right)^{1/q}\right).
	\end{align*}
	Note that by \eqref{AM1} and Young's inequality,
	$$
	\|u\|_{\mS^{2\alpha}_t(\rho_1)}
	\leq \eps\|u\|_{\mS^{2-\alpha}_t(\rho_1)}+C_\eps\|u\|_{\mL^\infty_t(\rho_1)}.
	$$
	Hence, multiplying both sides of \eqref{HG7} by $\rho_\delta(z)$ 
	we arrive at
	$$
	\|u\|_{\mS^{2-\alpha}_T(\rho_\delta)}
	\leq \eps\|u\|_{\mS^{2-\alpha}_T(\rho_\delta)}+C_\eps\|u\|_{\mL^\infty_T(\rho_1)}+C_\eps\mA^{b,f}_{T,\infty}(\rho_\kappa),
	$$
	and 
	\begin{align*}
	\|u\|_{\mL^\infty_T(\rho_\delta)}
	&\lesssim \mA^{b,f}_{T,\infty}(\rho_\kappa)+\left(\int^T_0\|u\|^q_{\mS^{2-\alpha}_t(\rho_1)}
	\dif t\right)^{1/q}.
	\end{align*}
	Both of the above two estimates implies that
	$$
	\|u\|_{\mL^\infty_T(\rho_1)}\leq \|u\|_{\mL^\infty_T(\rho_\delta)}\lesssim \mA^{b,f}_{T,\infty}(\rho_\kappa)+\left(\int^T_0\|u\|^q_{\mL^\infty_t(\rho_1)}\dif t\right)^{1/q}.
	$$
	Finally, we use Gronwall's inequality to conclude the first estimate in \eqref{Un}.
	\medskip
	\\
	 By \eqref{DT11}, \eqref{GZ11} and \eqref{EG01} we have for weight $\rho, \bar\rho\in \sW$
	\begin{align}\label{b:sharp1}
	\|u^\sharp\|_{L^\infty_T\bC^{2-\alpha}(\rho\bar\rho)}
	&\lesssim\|u\|_{L^\infty_T\bC^{2-\alpha}(\rho\bar\rho)}+\|\nabla u\Prec\sI_\lambda b\|_{L^\infty_T\bC^{2-\alpha}(\rho\bar\rho)}
	+\|\sI_\lambda f\|_{L^\infty_T\bC^{2-\alpha}(\rho\bar\rho)}\no\\
	&\lesssim\|u\|_{L^\infty_T\bC^{2-\alpha}(\bar\rho)}+\|\nabla u\|_{\mL^\infty_T(\bar\rho)}\|b\|_{L^\infty_T\bC^{-\alpha}(\rho)}
	+\|f\|_{L^\infty_T\bC^{-\alpha}(\rho)}\no\\
	&\lesssim\sqrt{\ell^b_T(\rho)}\|u\|_{L^\infty_T\bC^{2-\alpha}(\bar\rho)}+\|f\|_{L^\infty_T\bC^{-\alpha}(\rho)}.
	\end{align}
	Next we estimate each term on the right hand side of \eqref{DT110} by using Lemma \ref{lem:para}.
	\begin{enumerate}[$\bullet$]
		\item By \eqref{GA4}, \eqref{*} we have
		$$
		\|\nabla u\prec b-\nabla u\Prec b\|_{L^\infty_T\bC^{1-2\alpha}(\rho\bar\rho)}\lesssim 
		\|u\|_{\mS_T^{2-\alpha}(\bar\rho)}\|b_{L^\infty_T\bC^{-\alpha}(\rho)}.
		$$
		\item By \eqref{GZ1}, we have
		$$
		\|\nabla u\succ b\|_{L_T^\infty\bC^{1-2\alpha}(\rho\bar\rho)}\lesssim 
		\|u\|_{L_T^\infty\bC^{2-\alpha}(\bar\rho)}\|b\|_{L_T^\infty\bC^{-\alpha}(\rho)}.
		$$
		\item By \eqref{GA3} and \eqref{EG01} we have
		$$
		\|[\sL,\nabla u \Prec]\sI b\|_{L_T^\infty\bC^{1-2\alpha}(\rho\bar\rho)}\lesssim 
		\|u\|_{\mS_T^{2-\alpha}(\bar\rho)}\|b\|_{L_T^\infty\bC^{-\alpha}(\rho)}.
		$$
		\item By Lemma \ref{Le32} with $\gamma=2-2\alpha$, $\beta\in(\alpha,2-2\alpha)$, we have
		\begin{align*}
		\|b\circ\nabla u\|_{L_T^\infty\bC^{1-2\alpha}(\rho^{2+\eps}\bar\rho)}\lesssim \|u\|_{\mS_T^{2-\alpha}(\bar\rho)}
		+\|u^\sharp\|_{L_T^\infty\bC^{\beta+1}(\rho^{1+\eps}\bar\rho)}+\mA^{b,f}_{T,\infty}(\rho).
		\end{align*}
	\end{enumerate}
	Combining the above calculations and by \eqref{DT110} and \eqref{EG1} with $\theta=2$ and $q=\infty$, we obtain
	\begin{align}\label{CC8}
	\|u^\sharp\|_{\mathbb{S}_T^{3-2\alpha}(\rho^{2+\eps}\bar\rho)}&\lesssim \|u\|_{\mS_T^{2-\alpha}(\bar\rho)}
	+\|u^\sharp\|_{L_T^\infty\bC^{\beta+1}(\rho^{1+\eps}\bar\rho)}+\mA^{b,f}_{T,\infty}(\rho).
	\end{align}
	On the other hand, for $\eps>\frac{2\alpha-1}{2-3\alpha}$, one can choose $\beta$ close to $\alpha$ so that
	$$
	\theta:=\tfrac{\eps}{1+\eps}=\tfrac{\alpha+\beta-1}{1-\alpha}.
	$$
	Thus by interpolation inequality \eqref{DQ1}, Young's inequality and \eqref{b:sharp1}, for any $\delta>0$,
	\begin{align*}
	\|u^\sharp\|_{L_T^\infty\bC^{\beta+1}(\rho^{1+\eps}\bar\rho)}
	&\lesssim \|u^\sharp\|_{L_T^\infty\bC^{3-2\alpha}(\rho^{2+\eps}\bar\rho)}^{\theta}
	\|u^\sharp\|_{L^\infty_T\bC^{2-\alpha}(\rho\bar\rho)}^{1-\theta}
	\\&\leq \delta\|u^\sharp\|_{L_T^\infty\bC^{3-2\alpha}(\rho^{2+\eps}\bar\rho)}
	+C_\delta\Big(\|u\|_{\mS_T^{2-\alpha}(\bar\rho)}+\mA^{b,f}_{T,\infty}(\rho)\Big).
	\end{align*}
	Substituting this into \eqref{CC8}, we obtain the second estimate by taking $\rho=\rho_\kappa, \bar\rho=\rho_\delta$.
	
{\bf (Uniqueness).} It follows by Theorem \ref{Th72} in the appendix.
\end{proof}

\section{Hamilton-Jacobi-Bellman equations}\label{s:HJB}

In this section we consider the following HJB equation:
\begin{align}\label{HJB}
\p_t v=\tr(a\cdot\nabla^2 v)+B\cdot\nabla v+H(v,\nabla v),\ v(0)=v_0,
\end{align}
where $a:\mR_+\times\mR^d\to\mR^d\otimes\mR^d$ is a symmetric matrix-valued measurable function,
and $B:\mR_+\times\mR^d\to\mR^d$ is a vector-valued measurable function, and
$$
H(t,x,v,Q): \mR^+\times\mR^d\times\mR\times\mR^d\to\mR
$$
is a real-valued measurable function, and continuous in $v,Q$ for each $t,x$.

 For instance, for any $\zeta\in[1,2]$, the equation
\begin{align}\label{eq:22}
\sL v= |\nabla v|^\zeta +B \cdot \nabla v + f
\end{align}
is a typical HJB equation. 
Note that for $\lambda>0$, if we define
$$
 v_\lambda(t,x):=v(\lambda^2 t,\lambda x),\ B_\lambda(t,x):=\lambda B(\lambda^2 t,\lambda x),\ f_\lambda(t,x):=\lambda^2f(\lambda^2 t,\lambda x),
$$
then
$$
\sL  v_\lambda=\lambda^{2-\zeta} |\nabla v_\lambda|^\zeta +B_\lambda \cdot \nabla v_\lambda + f_\lambda.
$$
In particular, if $\zeta=2$, then the nonlinear term has the same order as the Laplacian term in scaling level. 
In this case, we shall say HJB \eqref{eq:22} being {\it critical}.
While for  $\zeta<2$, the nonlinear term can be controlled well by the Laplacian term. 
In this case, we shall say HJB \eqref{eq:22} being {\it subcritical}\footnote{Here the critical and subcritical conditions  are different from the meaning in \cite{Hai14}}.

Throughout this section we shall use the following polynomial weight function 
$$
\rho_\delta(x):=\<x\>^{-\delta}=(1+|x|^2)^{-\delta/2}\Rightarrow\rho^\gamma_\delta=\rho_{\gamma\delta},\ \delta,\gamma\in\mR,
$$
and make the following elliptic assumption on $a$:
\begin{enumerate}[{\bf (H$^\alpha_1$)}]
	\item $a:\mR_+\times\mR^d\to\mR^d\otimes\mR^d$ is a symmetric $d\times d$-matrix-valued  measurable function and satisfies that for some $c_0\in(0,1)$,
	\begin{align}\label{Uni}
	c_0|\xi|^2\leq \sum_{i,j=1}^da_{ij}(t,x)\xi_i\xi_j\leq c_0^{-1}|\xi|^2,\ \ \forall \xi\in\mR^d,
	\end{align}
	and for some $\alpha\in(0,1)$ and $c_1\geq 1$,
	$$
	|a(t,x)-a(t,y)|\leq c_1|x-y|^\alpha.
	$$
\end{enumerate}
About the nonlinear term $H$, we separately consider two cases: subscritical case for all $d\in\mN$ and critical case only for $d=1$, and assume
\begin{enumerate}[{\bf (H$^{\delta,\zeta}_{\rm sub}$)}]
	\item Suppose that for some $\delta,\zeta\in[0,2)$ and $c_2>0$,
	\begin{align}\label{HH}
	|H(t,x,v,Q)|\lesssim_{c_2}\<x\>^\delta+|Q|^\zeta.
	\end{align}
\end{enumerate}
\begin{enumerate}[{\bf (H$^{\delta,\beta}_{\rm crit}$)}]
	\item Suppose that $d=1$ and for some $\delta\in[0,2)$ and $c_2>0$,
\begin{align}\label{HH2}
	|H(t,x,v,Q)|\lesssim_{c_2}\<x\>^\delta+|Q|^2,\ \ |\p_vH(t,x,v,Q)|\lesssim_{c_2}\<x\>^\delta+|v|+|Q|,
\end{align}
	and for some $\beta\in(0,1]$ and all $|x-y|\leq 1$,
\begin{align}\label{HH1}
|H(t,x,v,Q)-H(t,y,v,Q)|\lesssim_{ c_2}|x-y|^\beta (\<x\>^\delta+\<y\>^\delta+|v|^2+|Q|^2).
\end{align}
\end{enumerate}

We introduce the following definition of strong solution to HJB equation \eqref{HJB}.
\bd\label{Def41}
We call a function $v\in \cap_{p\geq 2}\mH^{2,p}_{loc}$ strong solution to \eqref{HJB} if
for all $\psi\in C^\infty_c(\mR^d)$ and $t\geq 0$,
$$
\<v(t),\psi\>=\<v_0,\psi\>+\int^t_0\Big\<\big(\tr(a\cdot\nabla^2 v)+B\cdot\nabla v+H(v,\nabla v)\big)(s),\psi\Big\>\dif s,
$$
where $\<v_0,\psi\>:=\int v_0\psi$.
In particular, for all $t\geq 0$ and Lebesgue almost all $x\in\mR^d$,
$$
v(t,x)=v_0(x)+\int^t_0\Big(\tr(a\cdot\nabla^2 v)+B\cdot\nabla v+H(v,\nabla v)\Big)(s,x)\dif s.
$$
\ed
The aim of this section is to establish the following strong well-posedness for HJB equation \eqref{HJB}.
For simplicity of notation, we introduce the following parameter set for saying the dependence of a constant:
$$
\Theta:=(T, d,\alpha,\beta,\zeta,\delta,c_0,c_1,c_2).
$$
\bt\label{Th42}
Let $T>0$, $\delta\in(0,2)$ and $\alpha,\beta,\delta_1\in(0,1]$. Suppose that {\bf (H$^\alpha_1$)}, $B\in \mL_T^\infty(\rho_{\delta_1})$
 and {\bf (H$^{\delta,\zeta}_{\rm sub}$)} or {\bf (H$^{\delta,\beta}_{\rm crit}$)} hold. We let
\begin{align}\label{ETA}
\left\{
\begin{aligned}
&\eta>\tfrac{\zeta\delta}{2-\zeta}\vee[2\delta_1+\delta],\ &\mbox{\rm under {\bf (H$^{\delta,\zeta}_{\rm sub}$)}};\\
&\eta>2\left(\tfrac{(1+2\beta)\delta}{\beta}\vee(\delta_1+\delta)\right),&\mbox{\rm under {\bf (H$^{\delta,\beta}_{\rm crit}$)}}.
\end{aligned}
\right.
\end{align}
{\bf (Existence)} 
For any initial value $v_0\in \sC^2(\rho_{\delta})$, there are $p_0$ large enough
and strong solution $v$ for HJB equation \eqref{HJB}, which satisfies the following estimate: for any $p\geq p_0$,
there is a constant $C=C(\Theta, p,\eta,\delta_1,\|B\|_{\mL^\infty_T(\rho_{\delta_1})},\|v_0\|_{\sC^2(\rho_\delta)})>0$ such that
\begin{align}\label{DW1}
\|v\|_{\mL^\infty_T(\rho_{\delta})}+\|\p_tv\|_{\mL^{p}_T(\rho_\eta)}+\|v\|_{\mH^{2,p}_T(\rho_\eta)}\leq C.
\end{align}
In particular, for any $0\leq \eps'<\eps\leq 2$,
$$
\|v\|_{C^{\eps'/2}_T\bC^{2-\eps}(\rho_\eta)}\leq C.
$$
{\bf (Uniqueness)} 
If, in addition, for some $C>0$,
\begin{align}\label{WQ5}
|\p_v H(t,x,v,Q)|^{1/2}+|\p_Q H(t,x,v,Q)|\lesssim_C \<x\>+|v|^{1/\delta}+|Q|^{1/\eta},
\end{align}
then there is a unique strong solution with regularity \eqref{DW1}.
\et

\br\rm
When $a\in L^\infty_T\sC^1$, the above regularity result could be obtained by De-Giorgi's iteration method
since it can be written as the divergence form (cf. \cite{LSU68}).
However, for H\"older diffusion $a$ as we need, it seems not be studied in the literature. 
Besides, the unbounded $B$ and $H$
also cause many difficulties for obtaining the global estimates, which is crucial for  a-priori estimate such as \eqref{eq:2} and KPZ equation.
We believe that the above theorem is of its own interest.
\er

In the following we first establish a maximum principle in Section \ref{sec:4.1}. The subcritical case is treated in Section \ref{sec:4.2} by using $L^\infty(\rho_\delta)$-estimate and $L^p$-theory for PDEs. For the critical case,  we take spatial derivative on both sides and obtain a PDE of divergence  form. 
Then using the  $L^\infty(\rho_\delta)$-bound and energy estimate we obtain the $\mH^{2,p}_T(\rho_\eta)$-estimate in Section \ref{sec:4.3}. 
\subsection{Maximum principle in weighted spaces}\label{sec:4.1}
We first show the following maximum principle in weighted spaces.
\bt(Maximum principle)\label{Th43}
Let $T>0$ and $\delta\in(0,2)$. Suppose \eqref{Uni} and for some $c_2, c_3>0$,
$$
|H(t,x,v,Q)|\leq c_2 \<x\>^\delta+c_3|Q|^2,\ \ B\in \mL^\infty_T(\rho_1).
$$
For any $v_0\in L^\infty(\rho_{\delta})$, there is a function $C(r)=C_\Theta(r)>0$ with $C(0)=0$ such that
for any strong solution $v\in \cap_{p\geq2} \mH^{2,p}_{loc}\cap \mL^\infty_T(\rho_\delta)$ of \eqref{HJB} with initial value $v_0$,
\begin{align}\label{Max}
\|v\|_{\mL^\infty_T(\rho_{\delta})}\leq C(c_2+\|v_0\|_{L^\infty(\rho_{\delta})}).
\end{align}
\et

\begin{proof}
	We use a probabilistic method.
	For $\lambda>0$, define
	$$
	w(t,x):=\e^{\lambda v(t,x)}.
	$$
	By the chain rule, it is easy to see that $w$ satisfies
	$$
	\p_t w=\tr(a\cdot\nabla^2 w)+B\cdot\nabla w+\lambda w \Big(H(v,\nabla v)-\lambda \tr(a\cdot\nabla v\otimes\nabla v)\Big).
	$$
	For simplicity of notations, we write
	$$
	F_\delta(x):=c_2\<x\>^\delta,\ U_\lambda:=\lambda w \Big(H(v,\nabla v)-\lambda \tr(a\cdot\nabla v\otimes\nabla v)-F_\delta\Big).
	$$
	Next we reverse the time variable. For a space-time function $f$, we set
	$$
	f^T(t,x):=f(T-t,x).
	$$
	It is easy to see that $w^T(t,x)=w(T-t,x)$ solves the following backward equation:
	\begin{align}\label{GG4}
	\p_t w^T+\tr(a^T\cdot\nabla^2 w^T)+B^T\cdot\nabla w^T+U_\lambda^T+\lambda w^TF_\delta =0,
	\end{align}
	with subjected to the final condition
	\begin{align}\label{GG5}
	w^T(T,x)=w(0,x)=\e^{\lambda v_0(x)}.
	\end{align}
	Under \eqref{Uni} and $B\in \mL^\infty_T(\rho_1)$, for each $(t,x)\in[0,T]\times\mR^d$,
	it is well known that the following SDE has a (probabilistically) weak solution starting from $x$ at time $t$ (see \cite[page 87, Theorem 1]{Kry80})
	$$
	X^{t,x}_s=x+\int^s_t\sqrt{2a^T}(r,X^{t,x}_r)\dif W_r+\int^s_tB^T(r,X^{t,x}_r)\dif r,\ \ \forall s\in[t,T],
	$$
	where $W$ is a $d$-dimensional Brownian motion on some stochastic basis $(\Omega',\mathcal{F}',\mathbb{P})$.
	For $R>0$, define a stopping time
	$$
	\tau_R:=\inf\{s\geq t: |X^{t,x}_s|> R\}.
	$$
	It is well known that the following Krylov estimate holds (\cite[page 52, Theorem 2]{Kry80}): for any $p\geq d+1$,
	$$
	\mE\left(\int^{T\wedge\tau_R}_t f(s, X^{t,x}_s)\dif s\right)\leq C_R\left(\int^T_t\!\!\!\int_{B_R}|f(s, x)|^p\dif x\dif s\right)^{1/p}.
	$$
	Since $v\in  \cap_{p\geq 2}\mH^{2,p}_{loc}\cap \mL^\infty_T(\rho_\delta)$, it is easy to see that
	$$
	w^T\in\cap_{p\geq 2}\mH^{2,p}_{loc},\ \ \p_t w^T\in\cap_{p\geq 2}\mL^p_{loc}.
	$$
	Thus, for each fixed $(t,x)$, by generalized It\^o's formula (see \cite[page 122, Theorem 1]{Kry80}), we have
	\begin{align*}
	\dif_s w^T(s,X^{t,x}_s)&=(\p_sw^T+\tr(a^T\cdot \nabla^2w^T)+B^T\cdot\nabla w^T)(s,X^{t,x}_s)\dif s\\
	&\quad+(\sqrt{2a^T}\cdot\nabla w^T)(s,X^{t,x}_s)\dif W_s,
	\end{align*}
	and by \eqref{GG4} and \eqref{GG5},
	\begin{align*}
	&\e^{\int^{t'}_{t} \lambda F_\delta(X^{t,x}_s)\dif s} w^T(t',X^{t,x}_{t'})\\
	&=w^T(t,x)+\int^{t'}_t\e^{\int^s_{t} \lambda F_\delta(X^{t,x}_r)\dif r}\dif_s w^T(s,X^{t,x}_s)\\
	&\quad+\int^{t'}_t\e^{\int^s_{t} \lambda F_\delta(X^{t,x}_r)\dif r}(\lambda F_\delta w^T)(s,X^{t,x}_s)\dif s\\
	&=w^T(t,x)-\int^{t'}_t\e^{\int^s_{t} \lambda F_\delta(X^{t,x}_r)\dif r}U^T_\lambda(s,X^{t,x}_s)\dif s+M_{t'},
	\end{align*}
	where
	$$
	M_{t'}:=\int^{t'}_t\e^{\int^s_{t} \lambda F_\delta(X^{t,x}_r)\dif r}(\sqrt{2a^T}\cdot\nabla w^T)(s,X^{t,x}_s)\dif W_s.
	$$
	By \eqref{Uni} and $|H(v,Q)|\leq F_\delta+c_3|Q|^2$, one can choose $\lambda=c_3/c_0$ so that
	$$
	U^T_\lambda\leq \lambda w \Big(c_3|\nabla v|^2-\lambda c_0 |\nabla v|^2\Big)=0.
	$$
	Hence, for $\lambda=(c_3/c_0)\vee1$,
	\begin{align*}
	\e^{\lambda v(T-t,x)}=w^T(t,x)\leq\e^{\int^{t'}_{t} \lambda F_\delta(X^{t,x}_s)\dif s}w^T(t',X^{t,x}_{t'})-M_{t'}.
	\end{align*}
	Since $t'\mapsto M_{t'\wedge\tau_R}$ is a martingale, we have
	$$
	\e^{\lambda v(T-t,x)}\leq \mE\left(\e^{\int^{T\wedge\tau_R}_{t} \lambda F_\delta(X^{t,x}_s)\dif s}
	w^T(T\wedge\tau_R, X^{t,x}_{T\wedge\tau_R})\right).
	$$
	On the other hand, by Lemma \ref{th:7.2} in appendix, for any $\gamma\geq 0$ and $\alpha\in[0,2)$,
	$$
	\mE\left(\e^{\gamma \sup_{s\in[t,T]}\<X^{t,x}_s\>^\alpha}\right)\leq C(\gamma)\e^{C_2\gamma \<x\>^\alpha}.
	$$
	Since $w^T(t,x)\leq \e^{\lambda\|v\|_{\mL^\infty_T(\rho_\delta)}\<x\>^\delta}$, letting $R\to\infty$ and by the dominated convergence theorem, we get
	\begin{align*}
	\e^{\lambda v(T-t,x)}
	&\leq \mE\left(\e^{\int^{T}_{t} \lambda F_\delta(X^{t,x}_s)\dif s}w^T(T, X^{t,x}_T)\right)
	=\mE\left(\e^{\int^{T}_{t} \lambda F_\delta(X^{t,x}_s)\dif s+\lambda v_0(X^{t,x}_T)}\right)\\
	&\leq\mE\left(\e^{\ell_0\sup_{s\in[t,T]}\<X^{t,x}_s\>^{\delta}}\right)\leq C(\ell_0)\e^{\ell_0 \<x\>^{\delta}},
	\end{align*}
	where $\ell_0:=\lambda(c_2+\|v_0\|_{L^\infty(\rho_{\delta})})$.
	Hence,
	$$
	v(T-t,x)\leq C(\ell_0)\<x\>^{\delta}.
	$$
	By applying the above estimate to $-v$, we obtain the desired estimate.
\end{proof}


\subsection{Subcritical case}\label{sec:4.2}
In this subsection we consider the subcritical case {\bf (H$^{\delta,\zeta}_{\rm sub}$)} and prove some a priori regularity estimate.
For this aim, we prepare the following simple interpolation inequality in weighted spaces, which will play important roles in treating the weights.
\bl
(i) For any $p\geq 2$ and $r,p\in[1,\infty]$ with $\frac{2}{p}=\frac{1}{r}+\frac{1}{q}$,
and $\delta,\delta_1,\delta_2\in\mR$ with $\delta_1+\delta_2=2\delta$,
there is a constant $C=C(p,r,q,\delta,\delta_1,\delta_2)>0$ such that
\begin{align}\label{Es10}
\|\nabla v\rho_\delta\|_{L^p}\lesssim_C\|\nabla^2 v\rho_{\delta_1}\|^{1/2}_{L^q}\|v\rho_{\delta_2}\|^{1/2}_{L^r}
+\|v\rho_{\delta+1}\|_{L^p}.
\end{align}
(ii) For any $p,q\in[2,\infty), r\in [2,\infty]$ with $\frac{q+2}{p}=1+\frac{2}{r}$, and $\delta,\delta_1,\delta_2\in\mR$
with $\delta=\frac{q\delta_1}{q+2}+\frac{2\delta_2}{q+2}$, there is a constant $C=C(p,q,r,\delta,\delta_1,\delta_2)>0$ such that
\begin{align}\label{KJ3}
\|\nabla v\rho_\delta\|_{L^p}\lesssim_C
\left(\int|\nabla^2 v|^2||\nabla v|^{q-2}\rho^q_{\delta_1}\right)^{\frac{1}{q+2}}
\|v\rho_{\delta_2}\|_{L^r}^{\frac{2}{q+2}}+\|v\rho_{\delta+1}\|_{L^p}.
\end{align}
\el
\begin{proof}
	By definition and the integration by parts, we have
	\begin{align}
	\|\nabla v\rho_\delta\|_{L^p}^p&=\int |\nabla v|^p\rho_{\delta p}=\int \<\nabla v,\nabla v|\nabla v|^{p-2}\rho_{\delta p}\>\no\\
	&\lesssim \int|v|\Big(|\nabla^2 v||\nabla v|^{p-2}\rho_{\delta p}+|\nabla v|^{p-1}|\nabla\rho_{\delta p}|\Big).\label{KJ1}
	\end{align}
	(i) By H\"older's inequality we have
	\begin{align*}
	\int|v||\nabla^2 v||\nabla v|^{p-2}\rho_{\delta p}
	&\leq \|v\rho_{\delta_2}\|_{L^r}\|\nabla^2 v\rho_{\delta_1}\|_{L^q}\|\nabla v\rho_\delta\|_{L^p}^{p-2},
	\end{align*}
	and by $|\nabla\rho_{\delta}|\lesssim\rho_{\delta+1}$,
	\begin{align}\label{KJ2}
	\int|v| |\nabla v|^{p-1}|\nabla\rho_{\delta p}|\leq\|\nabla v\rho_\delta\|_{L^p}^{p-1}\|v\rho_{\delta+1}\|_{L^p}.
	\end{align}
	Therefore,
	$$
	\|\nabla v\rho_\delta\|_{L^p}^p\lesssim \|v\rho_{\delta_2}\|_{L^r}\|\nabla^2 v\rho_{\delta_1}\|_{L^q}\|\nabla v\rho_\delta\|_{L^p}^{p-2}
	+\|\nabla v\rho_\delta\|_{L^p}^{p-1}\|v\rho_{\delta+1}\|_{L^p}.
	$$
	Thus by Young's inequality, we obtain \eqref{Es10}.
	\medskip\\
	(ii) On the other hand, by H\"older's inequality we also have
	\begin{align*}
	&\int|v||\nabla^2 v||\nabla v|^{p-2}\rho_{\delta p}
	\leq \left(\int|\nabla^2 v|^2||\nabla v|^{q-2}\rho_{\delta_1 q}\right)^{1/2}\|\nabla v\rho_{\delta}\|_{L^p}^{p-\frac{q}{2}-1}\|v\rho_{\delta_2}\|_{L^r},
	\end{align*}
	which together with \eqref{KJ1} and \eqref{KJ2} yields \eqref{KJ3}.
\end{proof}
We now prove the following a priori regularity estimate.
\bt\label{Th47}
Let $T>0$, $\delta\in (0,2)$ and $\alpha,\delta_1\in(0,1]$.
Suppose {\bf (H$^\alpha_1$)}, $B\in \mL_T^\infty(\rho_{\delta_1})$
 and {\bf (H$^{\delta,\zeta}_{\rm sub}$)}. 
Then for any $\eta>(2\delta_1+\delta)\vee\frac{\zeta\delta}{2-\zeta}$ and $v_0\in \sC^2(\rho_{\delta})$, 
there is a $p_0$ large enough 
so that for all $p>p_0$ and any strong solution $v$ of HJB \eqref{HJB},
$$
\|\p_t (v\rho_\eta)\|_{\mL^{p}_T}+\|v\rho_\eta\|_{\mH^{2,p}_T}\leq C,
$$
where $C=C(\Theta,\eta,p,\delta_1,\|B\|_{\mL^\infty_T(\rho_{\delta_1})},\|v_0\|_{\sC^2(\rho_{\delta})})$.
\et
\begin{proof}
	Multiplying both sides of \eqref{HJB} by $\rho_\eta$, we get
	\begin{align}\label{Eq1}
	\p_t (v\rho_\eta)=\tr(a\cdot \nabla^2(v\rho_\eta))-\Gamma_\rho+(B\cdot\nabla v)\rho_\eta+H(v,\nabla v)\rho_\eta,
	\end{align}
	where
	$$
	\Gamma_\rho=\tr(a\cdot (2\nabla v\otimes\nabla\rho_\eta+v\nabla^2\rho_\eta)).
	$$
	Fix
	$$
	p>\frac{(2-\zeta)d}{(2-\zeta)\eta-\zeta\delta}\vee\frac{d}{\eta-2\delta_1-\delta}=:p_0.
	$$
	By the $L^p$-theory of PDEs (see \cite{K08}), there is a constant $C=C(\Theta, p)$ such that
	$$
	\|\p_t(v\rho_\eta)\|_{\mL^{p}_T}+\|v\rho_\eta\|_{\mH^{2,p}_T}\lesssim_C\|H(v,\nabla v)\rho_\eta
	+(B\cdot\nabla v)\rho_\eta-\Gamma_\rho\|_{\mL^p_T}+\|v_0\rho_\eta\|_{H^{2,p}}.
	$$
Since $p(\eta-\delta)>d$, we have
	$$
	\|v_0\rho_\eta\|_{H^{2,p}}\lesssim\|v_0\rho_{\delta}\|_{\sC^2}\left(\int_{\mR^d}\rho^p_{\eta-\delta}(x)\dif x\right)^{1/p}
	\lesssim \|v_0\|_{\sC^2(\rho_{\delta})},
	$$
and by \eqref{HH},
	\begin{align*}
	\|H(v,\nabla v)\rho_\eta\|_{\mL^p_T}\lesssim \|\rho_{\eta-\delta}\|_{L^p}+\||\nabla v|^\zeta\rho_\eta\|_{\mL^p_T}
	\lesssim 1+\|\nabla v\rho_{\eta/\zeta}\|_{\mL^{\zeta p}_T}^\zeta.
	\end{align*}
	By interpolation inequality \eqref{Es10} and using $|\nabla \rho_\delta|\lesssim\rho_{\delta+1}$, we have
	\begin{align*}
	\|\nabla v\rho_{\eta/\zeta}\|^\zeta_{\mL^{\zeta p}_T}
	&\leq\|\nabla^2 v\rho_\eta\|_{\mL^p_T}^{\zeta/2}\|v\rho_{\eta(2/\zeta-1)}\|_{\mL^q_T}^{\zeta/2}
	+\|v\rho_{\eta/\zeta+1}\|^\zeta_{\mL^{\zeta p}_T},
	\end{align*}
	where $q=p\zeta/(2-\zeta)$.
	Since $p(\eta-\zeta\delta/(2-\zeta))>d$, by \eqref{Max}, we have
	\begin{align*}
	\|v\rho_{2\eta/\zeta-\eta}\|_{\mL^q_T}^q&=\int^T_0\!\!\!\int_{\mR^d} |v(t,x)|^q\rho_{\eta p}(x)\dif x\dif t\\
	&\lesssim\int_{\mR^d}\rho_{\delta}(x)^{-p\zeta/(2-\zeta)}\rho_{\eta p}(x)\dif x\\
	&\lesssim\int_{\mR^d}(1+|x|)^{\frac{p\zeta\delta}{2-\zeta}-\eta p}\dif x\lesssim1,
	\end{align*}
	and also,
	$$
	\|v\rho_{\eta/\zeta+1}\|^\zeta_{\mL^{\zeta p}_T}\lesssim \|\rho_{\eta/\zeta+1-\delta}\|^\zeta_{\mL^{\zeta p}_T}\lesssim 1.
	$$
	Thus, for any $\eps\in(0,1)$, by Young's inequality,
	$$
	\|H(v,\nabla v)\rho_\eta\|_{\mL^p_T}\lesssim\eps\|\nabla^2 v\rho_\eta\|_{\mL^p_T}+1.
	$$
Since $B\in \mL^\infty_T(\rho_{\delta_1})$ and $\eta>2\delta_1+\delta$ and $p(\eta-2\delta_1-\delta)>d$, we also have by \eqref{Es10} and \eqref{Max}
	\begin{align*}
	\|(B\cdot\nabla v)\rho_\eta\|_{\mL^p_T}&\lesssim\|\rho_{\eta-\delta_1}|\nabla v|\|_{\mL^p_T}
	\lesssim\|\nabla^2 v\rho_\eta\|_{\mL^p_T}^{1/2}\|v\rho_{\eta-2\delta_1}\|^{1/2}_{\mL^p_T}+\|v\rho_{\eta+1}\|_{\mL^p_T}
	\\&\lesssim \eps \|\nabla^2 v\rho_\eta\|_{\mL^p_T}+1.
	\end{align*}
	Moreover,  noting that
	\begin{align*}
	|\Gamma_\rho|&\lesssim |\nabla v||\nabla\rho_\eta|+|v||\nabla^2\rho_\eta|\lesssim \rho_{\eta}|\nabla v|+\rho_{\eta}|v|,
	\end{align*}
	we have by \eqref{Es10} and \eqref{Max}
	\begin{align*}
	\|\Gamma_\rho\|_{\mL^p_T}\lesssim \|\nabla v\rho_\eta\|_{\mL^p_T}+\|v\rho_\eta\|_{\mL^p_T}\lesssim
	\|\nabla^2v\rho_\eta\|_{\mL^p_T}^{1/2}+1.
	\end{align*}
	Combining the above calculations, by Young's inequality, we get
	$$
	\|\p_t (v\rho_\eta)\|_{\mL^{p}_T}+\|v\rho_\eta\|_{\mH^{2,p}_T}\lesssim 1.
	$$
	The result now follows.
\end{proof}

\subsection{Critical one dimensional case}\label{sec:4.3}
In this subsection we consider the critical one dimensional case and prove the following a priori estimate.
\bt\label{Th48}
Let $T>0$ and $\alpha,\delta_1\in(0,1], \delta\in(0,2)$. 
Suppose {\bf (H$^\alpha_1$)}, $B\in \mL_T^\infty(\rho_{\delta_1})$
 and {\bf (H$^{\delta,\beta}_{\rm crit}$)}. 
For any $\eta>2\big(\frac{(1+2\beta)\delta}{\beta}\vee(\delta_1+\delta)\big)$
 and $v_0\in \sC^2(\rho_{\delta})$, there is a $p_0$ large enough 
so that for all $p>p_0$ and any strong solution $v$ of HJB \eqref{HJB},
$$
\|\p_t (v\rho_\eta)\|_{\mL^{p}_T}+\|v\rho_\eta\|_{\mH^{2,p}_T}\leq C,
$$
where $C=C(\Theta,\eta,p,\delta_1,\|B\|_{\mL^\infty_T(\rho_{\delta_1})}, \|v_0\|_{\sC^2(\rho_{\delta})})$.
\et

To prove this result, we first show the following lemma.
\bl\label{Le47}
Under the assumptions of Theorem \ref{Th48},
for any $\eta>\frac{(1+2\beta)\delta}{\beta}\vee(\delta_1+\delta)$, there is a $p_0$ large enough so that for all $p>p_0$
 and any strong solution $v$ of HJB \eqref{HJB},
\begin{align}\label{DQ11}
\|\p_x v\rho_\eta\|_{L^\infty_TL^p}+\int^T_0\!\!\!\int |\p^2_x v|^2|\p_x v|^{p-2}\rho^p_{\eta}\leq C.
\end{align}
\el
\begin{proof}
	Let $p\geq 2$ be fixed, whose value will be determined below. Define
	$$
	w(t,x):=\p_x v(t,x),\ \ \mA^w_p:=\int |\p_x w|^2|w|^{p-2}\rho^p_{\eta}.
	$$
	For $q\in[\frac{p}{2}+1,p+2]$ and $\gamma\in\mR$, by \eqref{KJ3} and \eqref{Max} and 
	$|\nabla\rho_\delta|\lesssim \rho_{\delta+1}$ we have
	\begin{align*}
	\left(\int |w|^q\rho_{p\eta+\gamma}\right)^{1/q}
	&\lesssim \left(\int |\p_x w|^2|w|^{p-2}\rho_{p\eta}\right)^{\frac{1}{p+2}}
	\|v\rho_{\delta_2}\|^{\frac{2}{p+2}}_{L^r}+\|v\rho_{\frac{p\eta+\gamma}{q}+1}\|_{L^q}\\
	&\lesssim\left(\mA^w_p\right)^{\frac{1}{p+2}}\|\rho_{\delta_2-\delta}\|^{\frac{2}{p+2}}_{L^r}
	+\|\rho_{\frac{p\eta+\gamma}{q}+1-\delta}\|_{L^q},
	\end{align*}
	where
	$$
	\delta_2:=\tfrac{(p+2-q)p\eta}{2q}+\tfrac{(p+2)\gamma}{2q},\ \ r:=\tfrac{2q}{p+2-q}\in[2,\infty].
	$$
	Recalling $\rho_\delta(x)=\<x\>^{-\delta}$ and $d=1$, we have for $q=p+2$ and $\gamma=2\delta$, or $q\in[\frac{p}{2}+1,p+2)$ and
	$\gamma>\frac{2q\delta}{p+2}+(1-p\eta)(1-\frac{q}{p+2})=:\gamma_0$,
	$$
	\|\rho_{\delta_2-\delta}\|_{L^r}+\|\rho_{\frac{p\eta+\gamma}{q}+1-\delta}\|_{L^q}<\infty.
	$$
	Thus we always have
	\begin{align}
	\int |w|^q\rho_{p\eta+\gamma}
	\lesssim
	\left\{
	\begin{aligned}
	&\mA^w_p+1,\qquad\qquad q=p+2,\gamma=2\delta,\\
	&(\mA^w_p)^{\frac{q}{p+2}}+1,\quad q\in[\tfrac{p}{2}+1,p+2), \gamma>\gamma_0.&\label{DD7}
	\end{aligned}
	\right.
	\end{align}
	Now by \eqref{HJB}, one sees that
	\begin{equation}\label{eq:z}
	\partial_t w=\partial_x\left(a\cdot \partial_xw\right)+\partial_x(Bw)+\partial_xH(v,w).
	\end{equation}
	Since $\eta>\big(\frac{1+2\beta}{\beta}\big)\delta\vee(\delta_1+\delta)$,
	we can choose  $p$ large enough such that
	\begin{align}\label{ESP}
	\eta>\left(\Big[2\tfrac{p+1}{p}+\tfrac{p+2}{\beta p}\Big]\delta+\tfrac{1}{p}\right)\vee\left((1+\tfrac2p)\delta_1+\tfrac1p+\delta\right).
	\end{align}
	Multiplying both sides of \eqref{eq:z} by $w|w|^{p-2}\rho_{p\eta}$ and integrating on $\mR$, we obtain
	\begin{align*}
	\frac{1}{p}\partial_t\int |w\rho_\eta|^p&=-\int a\p_x w\p_x(w|w|^{p-2}\rho_{p\eta})-\int Bw\p_x(w|w|^{p-2}\rho_{p\eta})\\
	&\quad-\int H(v,w)\p_x(w|w|^{p-2}\rho_{p\eta})\\
	&=:I_1+I_2+I_3.
	\end{align*}
	For $I_1$, since $a\geq c_0$ and $\eta>\frac{1}{p}+\delta$, by \eqref{DD7} with $q=p$ and $\gamma=0$, we have
	\begin{align*}
	I_1&\leq -c_0\int |\p_x w|^2|w|^{p-2}\rho_{p\eta}+C\int |\p_xw||w|^{p-1}\rho_{p\eta}\\
	&\leq-\frac{c_0}{2}\mA^w_p+C\int |w|^p\rho_{p\eta}\leq-\frac{c_0}{4}\mA^w_p+C.
	\end{align*}
	For $I_2$, since $|B|\leq \|B\|_{\mL^\infty_T(\rho_{\delta_1})}\rho^{-1}_{\delta_1}$
	and $\eta>(1+\frac{2}{p})\delta_1+\tfrac{1}{p}+\delta$, by \eqref{DD7} with $q=p$ and $\gamma=-2\delta_1$, we have
	\begin{align*}
	I_2&\lesssim \int |\p_x w||w|^{p-1}\rho_{p\eta-\delta_1}+\int |w|^p\rho_{p\eta+1-\delta_1}\\
	&\lesssim \left(\mA^w_p\right)^{1/2}\left(\int |w|^p\rho_{p\eta-2\delta_1}\right)^{1/2}+\int |w|^p\rho_{p\eta}\\
	&\lesssim \left(\mA^w_p\right)^{(p+1)/(p+2)}+1.
	\end{align*}
	Next comes to the hard term $I_3$. Let $\phi_\eps(y)=\eps^{-1}\phi(y/\eps)$, where $\phi\in C^\infty_c((-1,1))$ is 
	a smooth density function. Define for given $t\in[0,T]$ and $v,Q\in\mR$,
	\begin{align}\label{MOD}
	H_\eps(t,x,v,Q)=\int H(t,y,v,Q)\phi_{\eps\rho_{\delta/\beta}(x)}(x-y)\dif y.
	\end{align}
	We make the following decomposition for $I_3$:
	\begin{align*}
	I_3&=\int (H_\eps(v,w)-H(v,w))\p_x(w|w|^{p-2}\rho_{p\eta})
	\\&\quad-(p-1)\int H_\eps(v,w)\p_xw|w|^{p-2}\rho_{p\eta}
	\\&\quad-\int H_\eps(v,w)w |w|^{p-2}\p_x\rho_{p\eta}
	\\&:=I_{31}-I_{32}-I_{33}.
	\end{align*}
	For $I_{31}$, noting that by \eqref{HH1}, \eqref{MOD} and \eqref{Max},
	\begin{align*}
	&|H_\eps(x,v,w)-H(x,v,w)|\leq \int |H(y,v,w)-H(x,v,w)|\phi_{\eps\rho_{\delta/\beta}(x)}(x-y)\dif y
\\&\qquad\qquad\lesssim \eps^\beta\rho_{\delta}(x)\int (\<x\>^\delta+\<y\>^\delta+|v|^2+|w|^2)\phi_{\eps\rho_{\delta/\beta}(x)}(x-y)\dif y
\\&\qquad\qquad\lesssim \eps^\beta\rho_\delta(x)\Big(\<x\>^\delta+\<x\>^{2\delta}+|w|^2\Big)
\lesssim\rho^{-1}_{\delta}(x)+\eps^\beta\rho_{\delta}(x)|w|^2,
	\end{align*}
	we have
		\begin{align*}
	I_{31}&\lesssim \int\rho^{-1}_{\delta}|\p_x(w|w|^{p-2}\rho_{p\eta})|
	+\eps^\beta\int  \rho_{\delta}w^2 |\p_x(w|w|^{p-2}\rho_{p\eta})|=:I_{311}+I_{312}.
	\end{align*}
	For $I_{311}$, noting that by the chain rule and $|\nabla \rho_{p\eta}|\lesssim \rho_{p\eta+1}$,
	\begin{align}\label{HH3}
	|\p_x(w|w|^{p-2}\rho_{p\eta})|\lesssim |w|^{p-2}|\p_x w|\rho_{p\eta}+|w|^{p-1}\rho_{p\eta+1},
	\end{align}
since $\eta>\frac1p+\delta$, we have by \eqref{DD7},
	\begin{align*}
	I_{311}&\lesssim \int|w|^{p-2}|\p_xw|\rho_{p\eta-\delta}+\int|w|^{p-1}\rho_{p\eta+1-\delta}
	\\&\lesssim \left(\mA^w_p\right)^{1/2}\left(\int |w|^{p-2}\rho_{p\eta-2\delta}\right)^{1/2}+\int |w|^{p-1}\rho_{p\eta+1-\delta}
	\\&\lesssim \left(\mA^w_p\right)^{p/(p+2)}+1,
	\end{align*}
	where we used H\"older's inequality. 
	For $I_{312}$, by \eqref{HH3} and \eqref{DD7} and $\eta>\frac1p+\delta$, we have
	\begin{align*}
	I_{312}&\lesssim\eps^\beta\int|w|^{p}|\p_xw|\rho_{p\eta+\delta}+\int \eps^\beta |w|^{p+1}\rho_{p\eta+1+\delta}
	\\&\lesssim\eps^\beta\int(|w|^{p-2}|\p_xw|^2\rho_{p\eta}+|w|^{p+2}\rho_{p\eta+2\delta})+\int |w|^{p+1}\rho_{p\eta+1+\delta}
	\\&\lesssim \eps^\beta \mA^w_p+\left(\mA^w_p\right)^{(p+1)/(p+2)}+1,
	\end{align*}
	where we used H\"older's inequality and Young's inequality. 
	For $I_{32}$, noting that by the chain rule,
\begin{align*}
&H_\eps(v,w)\p_x w|w|^{p-2}=\p_x\Big(\int_0^{w}H_\eps(v,r)|r|^{p-2}\dif r\Big)
\\&\qquad-\int_0^w(\p_xH_\eps(v,r)+\p_vH_\eps(v,r)w)|r|^{p-2}\dif r,
\end{align*}
by the integration by parts, we have
	\begin{align*}
	I_{32}&\lesssim \int\left(\int_0^{w}|H_\eps(v,r)| |r|^{p-2}\dif r\right)|\p_x\rho_{p\eta}|
	\\&\quad+\int\left(\int_0^w |\p_xH_\eps(v,r)| |r|^{p-2}\dif r\right)\rho_{p\eta}
	\\&\quad+\int\left(\int_0^w|\p_vH_\eps(v,r)w| |r|^{p-2}\dif r\right)\rho_{p\eta}
	\\&=:I_{321}+I_{322}+I_{323}.
	\end{align*}
For $I_{321}$, by \eqref{HH2} and \eqref{DD7} we have
\begin{align*}
I_{321}&\lesssim \int\left(\int_0^{w}(\rho^{-1}_{\delta}+|r|^2) |r|^{p-2}\dif r\right)\rho_{p\eta+1}
\\&\lesssim \int(\rho^{-1}_{\delta}|w|^{p-1}+|w|^{p+1})\rho_{p\eta+1}
\\&\lesssim (\mA_p^w)^{\frac{p+1}{p+2}}+1.
\end{align*}
For $I_{322}$, noting that
$$
|\p_xH_\eps(x,v,w)|\lesssim \eps^{-1}\rho^{-1}_{\delta/\beta}(x)(\<x\>^\delta+w^{2}),
$$
we have
\begin{align*}
I_{322}&\lesssim \eps^{-1}\int(\rho^{-1}_{\delta+\delta/\beta}|w|^{p-1}+\rho^{-1}_{\delta/\beta}|w|^{p+1})\rho_{p\eta}\lesssim (\mA_p^w)^{\frac{p+1}{p+2}}+1.
\end{align*}
where we used $\eta>\Big[2\frac{p+1}p+\frac{p+2}{\beta p}\Big]\delta+\frac1p$ and  \eqref{DD7} with $q=p+1$, $\gamma=-\delta/\beta$ and $q=p-1$, $\gamma=-\delta-\delta/\beta$.

For $I_{323}$, by \eqref{HH2}, \eqref{Max} we have
\begin{align*}
I_{323}&\lesssim \int(|w|^{p+1}+\rho^{-1}_{\delta}|w|^{p})\rho_{p\eta}\lesssim1,
\end{align*}
where we used \eqref{DD7} with $q=p$, $\gamma=-\delta$.
Finally, for $I_{33}$, by \eqref{HH2}, we similarly use \eqref{DD7} to have 
\begin{align*}
I_{33}\lesssim \int (|w|^{p-1}\rho^{-1}_{\delta}+|w|^{p+1})\rho_{p\eta+1}\lesssim (\mA_p^w)^{\frac{p+1}{p+2}}+1.
\end{align*}
Combining the above calculations, choosing $\eps$ small enough and by Young's inequality, we obtain
	$$
	\frac{1}{2}\partial_t\|w\rho_\eta\|^p_{L^p}\lesssim -\frac{c_0}{8}\mA^w_p+1.
	$$
	Integrating both sides from $0$ to $T$, we obtain the desired estimate.
\end{proof}
Now we can give the proof of Theorem \ref{Th48}.
\begin{proof}[Proof of Theorem \ref{Th48}]
	We follow the proof of Theorem \ref{Th47}.
	Fix $p>d/(\eta-\delta)$.
	By the $L^p$-theory of PDEs (cf. \cite{K08}), we have
	$$
	\|\p_t(v\rho_\eta)\|_{\mL^{p}_T}+\|v\rho_\eta\|_{\mH^{2,p}_T}\lesssim_C
	\|H(v,\nabla v)\rho_\eta+(B\cdot\nabla v)\rho_\eta-\Gamma_\rho\|_{\mL^p_T}+\|v_0\rho_\eta\|_{H^{2,p}},
	$$
	with $\Gamma_\rho$ defined in the proof of Theorem \ref{Th47}.
	Since $p>d/(\eta-\delta)$, by $|H(v,Q)|\lesssim \<x\>^\delta+|Q|^2$, we have
	\begin{align*}
	\|H(v,\nabla v)\rho_{\eta}\|_{\mL^p_T}\lesssim \|\rho_{\eta-\delta}\|_{L^p}+\||\nabla v|^2\rho_\eta\|_{\mL^p_T}
	\lesssim 1+\|\nabla v \rho_{\eta/2}\|_{\mL^{2p}_T}^2.
	\end{align*}
	We have by H\"older's inequality and Sobolev's embedding,
	\begin{align*}
	\|\nabla v \rho_{\eta/2}\|_{\mL^{2p}_T}\leq& \|\nabla v\rho_\eta\|_{\mL^\infty_T}^{\theta}\|\nabla v\rho_{\eta_0}\|_{L^\infty_TL^r}^{1-\theta} 
	\\
	\lesssim&\|\nabla(\nabla v\rho_\eta)\|_{\mL^p_T}^{\theta}\|\nabla v\rho_{\eta_0}\|_{L^\infty_TL^r}^{1-\theta}
	+\|\nabla v\rho_\eta\|_{\mL^p_T}^{\theta}\|\nabla v\rho_{\eta_0}\|_{L^\infty_TL^r}^{1-\theta},
	\end{align*}
	where $\theta\in(0,1/2)$ and
	$$
	r=2p(1-\theta),\ \ \eta_0=\tfrac{1-2\theta}{2(1-\theta)}\eta.
	$$
	Let $p_0$ be as in Lemma \ref{Le47}.
	Since $\eta>2\Big(\frac{1+2\beta}{\beta}\delta\vee(\delta_1+\delta)\Big)$, one can choose $\theta$ close to zero and $p$ large enough so that
	$$
	\eta_0=\tfrac{1-2\theta}{2(1-\theta)}\eta>\tfrac{1+2\beta}{2\beta}\delta\vee(\delta_1+\delta),\ \ r,p\geq p_0.
	$$
	Thus by \eqref{DQ11}, we obtain
	$$
	\|\nabla v\rho_{\eta_0}\|_{L^\infty_TL^r}+\|\nabla v\rho_{\eta}\|_{\mL^p_T}\leq C,
	$$
	and therefore,
	$$
	\|H(v,\nabla v)\rho_\eta\|_{\mL^p_T}\leq\eps\|\nabla^2(v\rho_\eta)\|_{\mL^p_T}+C.
	$$
Moreover, as in the proof of Theorem \ref{Th47}, one has
	$$
	\|(B\cdot\nabla v)\rho_\eta-\Gamma_\rho\|_{\mL^p_T}\leq C.
	$$
	Thus we obtain the desired estimate as in the proof of Theorem \ref{Th47}.
\end{proof}

\subsection{Proof of Theorem \ref{Th42}}\label{sec:4.4}

In this subsection we prove Theorem \ref{Th42} by the previous a priori estimates.

{\bf (Existence).} Let $T>0$. For fixed $m\in\mN$, let $\chi^m_n(x):=\chi^m(x/n), n\in\mN$ be the cutoff function in $\mR^m$,
 and $\varrho^m_n(x):=n^m\varrho^m(nx), n\in\mN$ be the mollifiers in $\mR^m$, where $\chi^m\in C^\infty_c(\mR^m)$ with
$\chi^m=1$ for $|x|\leq 1$ and $\chi^m=0$ for $|x|>2$, and $\varrho^m\in C^\infty_c(\mR^m)$ is a density function. 
Define
$$
B_n(t,x):=B(t,x)\1_{|x|\leq n},\ \ \varphi_n(x):=v_0(x)\chi^d_n(x).
$$
For nonlinear term $H$, we construct the approximation $H_n$ as follows:
\begin{equation}\label{zz1}
H_n(t,x,v,Q):=((H(t,x,\cdot,\cdot)\chi^{d+1}_n)*\varrho^{d+1}_n)(v,Q)\chi_n^d(x).
\end{equation}
We consider the following approximation equation:
\begin{align}\label{AHJB}
\p_t v_n=\tr(a\cdot\nabla^2 v_n)+B_n\cdot\nabla v_n+H_n(v_n,\nabla v_n),\ \ v_n(0)=\varphi_n.
\end{align}
Note that by the assumptions of Theorem \ref{Th42},
$$
B_n\in\cap_{p\in[1,\infty]}\mL^p_T,\ \ \varphi_n\in \cap_{p\in[1,\infty]}H^{2,p},
$$
and
$$
\|H_n\|_{\mL_T^\infty}+\|\p_v H_n\|_{\mL_T^\infty}+\|\p_Q H_n\|_{\mL_T^\infty}<\infty.
$$
It is well known that the approximation equation \eqref{AHJB} admits 
a unique strong solution $v_n\in\cap_{p\geq 2} \mH^{2,p}_T$ (cf. \cite{K08}).
Moreover, by definition, we have the following uniform estimates:
$$
\|B_n\rho_{\delta_1}\|_{\mL_T^\infty}\leq\|B\rho_{\delta_1}\|_{\mL_T^\infty},
$$
and for some $C$ independent of $n$, in the subscritical case,
$$
|H_n(v,Q)|\lesssim_C \<x\>^\delta+|Q|^\zeta,
$$
and in the critical case $d=1$,
$$
|H_n(t,x,v,Q)|\lesssim_C \<x\>^\delta+|Q|^2,\ \ |\p_vH_n(t,x,v,Q)|\lesssim_C\<x\>^\delta+|v|+|Q|,
$$
$$
|H_n(t,x,v,Q)-H_n(t,y,v,Q)|\lesssim_C |x-y|^\beta(\<x\>^\delta+\<y\>^\delta+|v|^2+|Q|^2),
$$
Thus by Theorems \ref{Th43}, \ref{Th47} and \ref{Th48}, we have the following uniform estimate:
for $\eta$ being as in \eqref{ETA} and $p$ large enough,
$$
\|v_n\rho_{\delta}\|_{\mL^\infty_T}+\|\p_t(v_n\rho_\eta)\|_{\mL^{p}_T}+\|v_n\rho_\eta\|_{\mH^{2,p}_T}\leq C,
$$
where $C$ is independent of $n$.
By Sobolev's embedding (cf. \cite[Lemma 2.3]{CZ16}), for any $\beta'\in(0,2-\frac{2}{p})$ and $\gamma=1-\frac{\beta'}{2}-\frac{1}{p}$,
\begin{align*}
\|v_n\rho_\eta\|_{C^\gamma_T\bC^{\beta'-d/p}}\lesssim&\|v_n\rho_\eta\|_{C^\gamma_TH^{\beta',p}}
\lesssim \|\p_t(v_n\rho_\eta)\|_{\mL^{p}_T}+\|v_n\rho_\eta\|_{\mH^{2,p}_T}+\|v_0\rho_\eta\|_{H^{\beta',p}}\leq C.
\end{align*}
Thus by Ascolli-Arzela's lemma, there are subsequence $n_k$ and $v\in\mL^\infty_T(\rho_{\delta})\cap\mH^{2,p}_T(\rho_\eta)$ such that
for all $t,x$,
\begin{align}\label{WQ1}
\nabla^j v_{n_k}(t,x)\to \nabla^jv(t,x),\ \ j=0,1,
\end{align}
and for any $R>0$,
\begin{align}\label{WQ2}
\nabla^2v_n\to \nabla^2v\mbox{ weakly in $L^2([0,T]\times B_R)$.}
\end{align}
By taking limits for \eqref{AHJB}, one finds that $v$ is a strong solution to \eqref{HJB} in the sense of Definition \ref{Def41}.
Indeed, for any $\psi\in C^\infty_c(\mR^d)$,  by \eqref{WQ2} we have
$$
\lim_{n\to\infty}\int^t_0 \<\tr(a\cdot\nabla^2 v_n),\psi\>\dif s=\int^t_0 \<\tr(a\cdot\nabla^2 v),\psi\>\dif s
$$
and by \eqref{WQ1} and the dominated convergence theorem,
$$
\lim_{n\to\infty}\int^t_0 \<B_n\cdot\nabla v_n,\psi\>\dif s=\int^t_0 \<B\cdot\nabla v,\psi\>\dif s
$$
Moreover, since for each $(t,x)\in[0,T]\times\mR^d$ and $R>0$,
$$
\lim_{n\to\infty}\sup_{|(v,Q)|\leq R}|H_n(t,x,v,Q)-H(t,x,v,Q)|=0,
$$
by \eqref{WQ1} and the dominated convergence theorem, we also have
$$
\lim_{n\to\infty}\int^t_0 \<H_n(s,\cdot,v_n,\nabla v_n),\psi\>\dif s=\int^t_0 \<H(s,\cdot,v,\nabla v),\psi\>\dif s.
$$
Thus we obtain the existence of a strong solution.

{\bf (Uniqueness).} We prove the uniqueness on the time interval $[0,1]$.
Let $v_1, v_2$ be two strong solutions of HJB \eqref{Def41}with the same initial value $v_0$.
By \eqref{DW1}, we have
\begin{align}\label{WQ4}
v_1, v_2\in \mL^\infty_1(\rho_{\delta})\cap L^\infty_1\sC^1(\rho_\eta).
\end{align}
Let $V:=v_1-v_2$.
Then $V$ is a strong solution of the following linear PDE:
$$
\p_t V=\tr(a\cdot\nabla^2 V)+B\cdot\nabla V+G\cdot\nabla V+K\cdot V,\ V(0)=0,
$$
where
$$
G:=\int^1_0\p_Q H(v_1, \nabla v_1+\theta\nabla(v_2-v_1))\dif\theta,
$$
and
$$
K:=\int^1_0\p_v H(v_1+\theta(v_2-v_1), \nabla v_2)\dif\theta.
$$
By \eqref{WQ4} and \eqref{WQ5}, there is a constant $C_0>0$ such that for all $(t,x)\in[0,1]\times\mR^d$,
\begin{align}\label{DDS}
|G(t,x)|\lesssim_{C_0}\<x\>, \ \ |K(t,x)|\lesssim_{C_0}\<x\>^2.
\end{align}
Let $T\in(0,1]$ be fixed and determined below. For a space-time function $F$, let
$$
F^T(t,x):=F(T-t,x).
$$
Thus under {\bf (H$^\alpha_1$)} and $B\in \mL^\infty_1(\rho_{\delta_1})$, for each $(t,x)\in[0,T]\times\mR^d$,
the following SDE admits a unique weak solution starting from $x$ at time $t$ (see \cite{Kry80}):
$$
X^{t,x}_s=x+\int^s_t\sqrt{2a^T}(r,X^{t,x}_r)\dif W_r+\int^s_t(B^T+G^T)(r,X^{t,x}_r)\dif r,\ \ \forall s\in[t,T].
$$
As in the proof of Theorem \ref{Th43}, by It\^o's formula, we have
\begin{align*}
\e^{\int^{t'}_{t} K^T(s, X^{t,x}_s)\dif s} V^T(t',X^{t,x}_{t'})=V^T(t,x)+M_{t'},\ t'\in[t,T],
\end{align*}
where $M_{t'}$ is a continuous local martingale. Note that by \eqref{DDS} and \cite[Lemma 2.2]{Z10}, for 
$T=T(C_0,d,c_0,\|B\|_{\mL^\infty_1(\rho_{\delta_1})})$ small enough,
$$
\mE\e^{2\int^T_{t} K^T(s, X^{t,x}_s)\dif s}\leq\mE\e^{2C_0\sup_{s\in[t,T]}|X^{t,x}_s|^2}<\infty.
$$
By using stopping time technique as in the proof of Theorem \ref{Th43} and taking expectations, we find that
for $T$ being small enough, $0\leq t\leq T$
$$
V^T(t,x)=\mE\e^{\int^T_{t} K^T(s, X^{t,x}_s)\dif s} V(0,X^{t,x}_T)\equiv 0.
$$
Thus we obtain the uniqueness on small time interval $[0,T]$. We can proceed to consider $[T,2T]$ and so on.
The proof is complete.

\section{HJB equations with distribution-valued coefficients}\label{Zvonkin}

In this section we fix $\alpha\in(\frac{1}{2},\frac{2}{3})$ and $\kappa\in(0,1)$ being small enough so that
\begin{align}\label{DE9}
\bar\alpha:=\alpha+\widetilde{\kappa}\in(\tfrac{1}{2},\tfrac{2}{3}),\ \ \delta:=2(\tfrac{9}{2-3\alpha}+1)\kappa<1,
\end{align}
where $\widetilde\kappa=\kappa^{1/4}$. 
We consider the following singular HJB equation:
\begin{equation} \label{HJB0}
\sL u=\left( \partial_t - \Delta \right) u = b \cdot \nabla u + H(u,\nabla u) + f, \quad u (0) =\varphi, 
\end{equation}
where $(b,f)\in\cap_{T>0}\mB^\alpha_T({\rho_\kappa})$ and 
$$H(t,x,u,Q):\mR^+\times\mR^d\times\mR\times\mR^d\to\mR$$ satisfies 
{\bf (H$^{\delta,\zeta}_{\rm sub}$)} or {\bf (H$^{\delta,\beta}_{\rm crit}$)} with $\zeta\in[0,2)$, $\beta\in(0,1]$ and for some $C>0$,
\begin{align}\label{DDA}
|\p_u H(t,x,u,Q)|+|\p_Q H(t,x,u,Q)|\lesssim_C \<x\>^\delta+|u|+|Q|.
\end{align}

To understand HJB equation \eqref{HJB0}, we write it in the paracontrolled sense:
\begin{equation}\label{eq:ansatz}
u=\nabla u\Prec \sI b+\sI f+u^\sharp,
\end{equation}
where $u^\sharp$ solves the following equation
\begin{equation}\label{eq:sharp}
\left\{
\aligned
\sL u^\sharp&=\nabla u\prec b-\nabla u\Prec b+\nabla u\succ b+b\circ\nabla u
\\&\quad+H(u,\nabla u)-[\sL,\nabla u\Prec ]\sI b,
\\u^\sharp(0)&=u_0,
\endaligned
\right.\end{equation}
with $b\circ\nabla u$ being defined by \eqref{FQ2} for $\lambda=0$.

Our aim of this section is to prove the following result.
\bt\label{existence}  
Let $T>0$, $\beta\in(0,1-\bar\alpha]$, $\zeta\in[0,2)$ and $\alpha,\bar\alpha,\kappa,\delta$ be as in \eqref{DE9}. 
Suppose that $(b,f)\in\mB^\alpha_T({\rho_\kappa})$ and {\bf (H$^{\delta,\zeta}_{\rm sub}$)} or {\bf (H$^{\delta,\beta}_{\rm crit}$)} as well as  \eqref{DDA} 
hold. Let $\eps\in(0,1-\alpha)$ and
\begin{align*}
\left\{
\begin{aligned}
&\eta>\tfrac{2\zeta\delta}{2-\zeta}\vee[2\widetilde\kappa+2\delta],\ &\mbox{\rm under {\bf (H$^{\delta,\zeta}_{\rm sub}$)}};\\
&\eta>2\left[\tfrac{2(1+2\beta)\delta}{\beta}\vee(\widetilde\kappa+2\delta)\right],&\mbox{\rm under {\bf (H$^{\delta,\beta}_{\rm crit}$)}}.
\end{aligned}
\right.
\end{align*}
For any initial value $\varphi\in \bC^{1+\alpha+\eps}(\rho_{\eps\delta})$ for some $\eps>0$,
there is a  paracontrolled solution $(u, u^\sharp)$ solving \eqref{eq:ansatz} and \eqref{eq:sharp} with regularity
$$
u\in \mathbb{S}_T^{2-\bar\alpha}(\rho_\eta)\cap \mL^\infty_T(\rho_{2\delta}),
\quad u^\sharp\in \mathbb{S}_T^{3-2\bar\alpha}(\rho_{2\eta})\cap \mL_T^\infty(\rho_{2\delta+\kappa}).
$$ 
Furthermore, suppose  $\eta<\frac{1-\alpha}2$, the paracontrolled solution $(u,u^\sharp)$ is unique. 
\et

\br 
Since $\kappa$ is arbitary small, $\eta$ could be arbitary small.
\er

To show the existence of a paracontrolled solution, we use the approximation method. More precisely, since $(b,f)\in\mB^\alpha_T(\rho_\kappa)$, 
by the very definition, there is a sequence of $(b_n,f_n)\in L^\infty_T\sC^\infty(\rho_\kappa)$ 
	with 
	$$
	\sup_n\Big(\ell_T^{b_n}(\rho_\kappa)+\mA^{b_n,f_n}_{T,\infty}(\rho_\kappa)\Big)\leq c_0,
	$$
	and such that for  $\lambda\geq 0$,
	\begin{align}\label{LL9}
	\left\{
	\begin{aligned}
&		\lim_{n\to\infty}\Big(\|b_n-b\|_{L_T^\infty\bC^{-\alpha}(\rho_\kappa)}+\|f_n-f\|_{L_T^\infty\bC^{-\alpha}(\rho_\kappa)}\Big)=0,\\
&	\lim_{n\to\infty}\|b_n\circ\nabla\sI_\lambda b_n-b\circ\nabla\sI_\lambda b\|_{L_T^\infty\bC^{1-2\alpha}(\rho_\kappa)}=0,\\
&	\lim_{n\to\infty}\|b_n\circ\nabla\sI_\lambda f_n-b\circ\nabla\sI_\lambda f\|_{L_T^\infty\bC^{1-2\alpha}(\rho_\kappa)}=0.
\end{aligned}
\right.
	\end{align}
	Moreover, let $\varphi_n$ be the mollifying approximation of $\varphi$ so that
	$$
	\sup_n\|\varphi_n\|_{\bC^{1+\alpha+\eps}(\rho_{\eps\delta})}\lesssim\|\varphi\|_{\bC^{1+\alpha+\eps}(\rho_{\eps\delta})}.
	$$
We consider the following approximation equation:
\begin{align}\label{APP}
\sL u_n = b_n \cdot \nabla u_n + H(u_n,\nabla u_n) + f_n, \quad u_n(0) = \varphi_n. 	
\end{align}
By Theorem \ref{Th42}, it is well known that approximation equation \eqref{APP} admits a unique strong solution $u_n$ with
$$
\|u_n\|_{\mL^\infty_T(\rho_{\delta})}+\|\p_tu_n\|_{\mL^{p}_T(\rho_\eta)}+\|u_n\|_{\mH^{2,p}_T(\rho_\eta)}\leq C_n.
$$
Our aim is of course to establish the following uniform estimate:
\begin{align}\label{UNI}
\sup_n\Big(\|u_n\|_{\mathbb{S}_T^{2-\bar\alpha}(\rho_\eta)}+\|u_n\|_{\mL^\infty_T(\rho_{2\delta})}
+\|u^\sharp_n\|_{\mS_T^{3-2\bar\alpha}(\rho_{2\eta})}+\|u^\sharp_n\|_{\mL_T^\infty(\rho_{2\delta+\kappa})}\Big)\leq C,
\end{align}
where $u^\sharp_n$ is defined by \eqref{eq:ansatz} with $(b,f)$ being replaced by $(b_n,f_n)$.

To show the uniform estimate \eqref{UNI}, our approach is to transform \eqref{APP} into  HJB equation 
studied in Section \ref{s:HJB}. In the following, for simplicity, we shall drop the subscript $n$ and use the convention that
all the constants appearing below only depend on the parameter set
$$
\Theta:=(T, d,\alpha,\beta,\eta,\zeta,\kappa,c_0,\eps,\|\varphi\|_{\bC^{1+\alpha+\eps}(\rho_{\eps\delta})}).
$$ 
First of all, by Lemma \ref{lem:local2}, one can make the following 
decomposition for the initial value $\varphi\in \bC^{1+\alpha+\eps}(\rho_{\eps\delta})$: for $\eps_0\in(0,\eps/4)$,
$$
\varphi=\varphi_1+\varphi_2,\ \ \varphi_1\in \bC^{1+\alpha+\eps_0},\ \varphi_2\in\sC^2(\rho_\delta).
$$
Next we make the following decomposition for $u$:
$$
u=u_1+u_2,
$$
where $u_1$ solves the following linear equation with non-homeogeneous term $f$
\begin{equation} \label{L1}
\sL u_1 = b \cdot \nabla u_1  + f, \quad u_1(0)=\varphi_1,
\end{equation}
while $u_2$ solves the following HJB equation 
\begin{equation} \label{L2}
\sL u_2 = b \cdot \nabla u_2  + H(u_1+u_2,\nabla u_1+\nabla u_2), \quad u_2(0)=\varphi_2.
\end{equation}
Clearly, the linear equation \eqref{L1} can be uniquely solved by Theorem \ref{Th33} with the solution $u_1\in \mS_T^{2-\alpha}(\rho_\delta)$.
Thus it remains to solve \eqref{L2}. However, since $b$ is a distribution, to obtain the a priori estimate, we can not directly use Theorem \ref{Th42}.
We shall use \eqref{dec} and Zvonkin's transformation to kill the bad part of $b$. 

\subsection{Zvonkin's transformation for HJB equations}\label{sec:sch}

In this subsection we introduce a transformation of phase space to kill the distributional part in the drift of HJB equation  \eqref{L2}
so that we are in the situation of Section \ref{s:HJB}.
Such a transformation was first used by Zvonkin in \cite{Z74} to study the SDE with bad drifts. 
In the literature, it is also called Zvonkin's transformation.
Below we always assume 
\begin{align}\label{MN2}
b\in L^\infty_T(\sC^\infty(\rho_\kappa)),\ \ell^b_T(\rho_\kappa)\leq c_0.
\end{align}
Let us first recall the following decomposition introduced in \eqref{dec}:
$$
b = b_{>} + b_{\leqslant}:=\VV_>b+\VV_\leq b.
$$ 
Furthermore, we define
\begin{align}\label{b1}
\bar{b}:=b_{>}\circ \nabla \sI_\lambda(b_>),\ \ \bar{b}_>&:=\VV_>\bar{b},\quad \bar{b}_\leq:=\VV_\leq \bar{b}.
\end{align}
\bl\label{Le53}
For any $m\in\mN$ and $\eps>0$, it holds that
$$
b_>\in L^\infty_T\sC^m,\ \ \bar b_\leq\in L^\infty_T\sC^m(\rho_{2\kappa+\eps}).
$$
For some $C=C(d,\alpha,\kappa)>0$, it holds that
\begin{align}\label{eq:b1}
\| b_{>}\|_{L^\infty_T \bC^{-\alpha-\widetilde\kappa}}+\|b_{\leqslant} \|_{\mL^\infty_T (\rho_{\widetilde\kappa})}\lesssim_C \sqrt{\ell_T^b(\rho_{\kappa})},
\end{align}
where $\tilde\kappa=\kappa^{1/4}$, and
\begin{align}\label{e:xi}
\|\bar{b}\|_{L^\infty_T\bC^{1-2\alpha}(\rho_{\widetilde\kappa})}+
\|\bar{b}_>\|_{L^\infty_T\bC^{1-2\alpha-\widetilde\kappa}}
+\|\bar{b}_\leq\|_{\mL^\infty_T(\rho_{\widetilde\kappa})}\lesssim_C \ell_T^b(\rho_\kappa).
\end{align}
\el
\begin{proof}
	(i) The first result follows by Lemma \ref{lem:local2}.
	\\
	\\
	(ii) We use Lemma \ref{lem:local2} for weight $\rho_{\kappa^{1/2}}$ to conclude
	$$
	\| b_{>}\|_{L^\infty_T \bC^{-\alpha-\widetilde\kappa}}
	\lesssim \| b_{>}\|_{L^\infty_T \bC^{-\alpha-\kappa^{1/2}}}
	\lesssim \| b \|_{L^\infty_T\bC^{-\alpha} (\rho_\kappa)}\leq \sqrt{\ell_T^b(\rho_{\kappa})}.
	$$
	Since $\alpha<1$, we can choose $\eps>0$ being small enough so that
	$$
	\bar\kappa:=\kappa+\kappa^{1/2}(\alpha+\eps)\leq \kappa^{1/2}-\kappa<\tfrac23\widetilde{\kappa}-\kappa.
	$$
	Noting that 
	$$
	\rho_{\bar\kappa}(x)=\<x\>^{-\kappa^{1/2}(\kappa^{1/2}+\alpha+\eps)}=\rho^{\kappa^{1/2}+\alpha+\eps}_{\kappa^{1/2}}(x),
	$$
	by Lemma \ref{lem:local2} again, we have
	\begin{align*}
	\|b_{\leqslant} \|_{\mL^\infty_T (\rho_{\widetilde\kappa})}\leq\|b_{\leqslant} \|_{\mL^\infty_T (\rho_{\bar\kappa})}
	&=\|b_{\leqslant} \|_{\mL^\infty_T (\rho^{\kappa^{1/2}+\alpha+\eps}_{\kappa^{1/2}})}
	\\&\lesssim \| b \|_{L^\infty_T\bC^{-\alpha} (\rho^{\kappa^{1/2}}_{\kappa^{1/2}})}=\| b \|_{L^\infty_T\bC^{-\alpha} (\rho_\kappa)}.
	\end{align*}
	(iii)  Note that by definition \eqref{b1},
	$$
	\bar b=b\circ\nabla \sI_\lambda b- b\circ \nabla \sI_\lambda(b_{\leqslant}) - b_{\leqslant}\circ\nabla\sI_\lambda(b_>)
	$$
	and
	$$
	\|b\circ\nabla \sI_\lambda b\|_{L^\infty_T\bC^{1-2\alpha}(\rho_{2\kappa})}\leq \ell_T^b(\rho_\kappa).
	$$
	By \eqref{GZ2}, \eqref{EG01} and \eqref{eq:b1}, we have for $\eps\in(0,1-\alpha)$,
	\begin{align*}
	\|b\circ\nabla \sI_\lambda(b_{\leqslant})\|_{L^\infty_T\bC^0(\rho_{\kappa+\bar\kappa})}
	&\lesssim\|b\|_{L^\infty_T\bC^{-\alpha}(\rho_\kappa)}\|b_{\leqslant}\|_{L_T^\infty\bC^{\alpha+\eps-1}(\rho_{\bar\kappa})} 
	\lesssim \ell_T^b(\rho_\kappa),
	\end{align*}
	and 
	\begin{align*}
	\|b_\leq\circ\nabla \sI_\lambda(b_{>}) \|_{L^\infty_T\bC^{1-\alpha-\widetilde\kappa}(\rho_{\bar\kappa})}
	&\lesssim\|b_\leq\|_{\mL_T^\infty(\rho_{\bar\kappa})}\|b_>\|_{L^\infty_T\bC^{-\alpha-\widetilde\kappa}}
	\lesssim \ell_T^b(\rho_\kappa).
	\end{align*}
	Combining the above estimate we get
	$$
	\|\bar{b}\|_{L^\infty_T\bC^{1-2\alpha}(\rho_{\widetilde\kappa})}\lesssim\|\bar{b}\|_{L^\infty_T\bC^{1-2\alpha}(\rho_{\kappa+\bar\kappa})}\lesssim \ell_T^b(\rho_\kappa).
	$$
	(iii) As for the other two estimates in  \eqref{e:xi}, we use Lemma \ref{lem:local2} for weight $\rho_{\widetilde\kappa}$ to have
	\begin{align*}
	\|\bar b_>\|_{L_T^\infty\bC^{1-2\alpha-\widetilde\kappa}}\leq
		\|\bar b_>\|_{L_T^\infty\bC^{1-2\alpha-\frac{\bar\kappa+\kappa}{\widetilde\kappa}}}\lesssim\|\bar{b}\|_{L^\infty_T\bC^{1-2\alpha}(\rho_{\kappa+\bar\kappa})}\lesssim \ell_T^b(\rho_\kappa),
	\end{align*}
	and for $\eps>0$ small enough
	\begin{align*}
	\|\bar b_\leq\|_{\mL_T^\infty(\rho_{\widetilde\kappa})}\leq
	\|\bar b_\leq\|_{\mL_T^\infty(\rho_{{\bar\kappa+\kappa+\widetilde\kappa(2\alpha-1+\eps)}})}\lesssim\|\bar{b}\|_{L^\infty_T\bC^{1-2\alpha}(\rho_{\kappa+\bar\kappa})}\lesssim \ell_T^b(\rho_\kappa).
	\end{align*}
	Now we complete the proof. 
\end{proof}

Now we consider the following vector-valued parabolic equation:
\begin{align}\label{e:phi}
\sL_\lambda \u= (b_{>} -\bar{b}_\leq) \cdot(\nabla \u+\mI),\quad \u(0)={\bf 0}\in\mR^d.
\end{align}
\br\label{re}\rm
The reason of considering $b_{>} -\bar{b}_\leq$ rather than $b_>$ is the following: 
in order to use Lemma \ref{Le11a} to construct a $C^1$-diffemorphism, we have to require $\ell^{b_>}_T(1)<\infty$.
However, by \eqref{e:xi}, $\bar b=b_{>}\circ\nabla \sI_\lambda(b_>)$ only stays a priorily in a weight space.
Thus the term $\bar{b}_\leq$ will be used to cancel the weight term in renormalizing $b_>\circ\nabla\u$. 
\er

Notice that by (i) of Lemma \ref{Le53}, the above equation admits a unique smooth solution $\u$.
Here our aim is to show the following a priori regularity estimate for $\u$ so that $\u$ stays in an unweighted Besov space. 

\bl\label{lem:phi} 
Let $\alpha\in(\frac12,\frac23)$ and $\kappa\in(0,(\frac23-\alpha)^4)$.
Under \eqref{MN2}, for $\bar\alpha=\alpha+\widetilde\kappa$,
there exist $\lambda=\lambda(\Theta)$ large enough and $C=C(\Theta)>0$ such that 
\begin{align}\label{L5}
\|\u\|_{\sC^1}\leq 1/2,\ \ \| \u \|_{\mathbb{S}^{2-\bar\alpha}_T}\leq C.
\end{align}
\el
\begin{proof}
	We use the paracontrolled ansatz as  in \eqref{DT11} and write
	$$\u=\nabla \u\Prec \sI_\lambda(b_>)+\sI_\lambda(b_>)+\u^\sharp,$$
	where
	$$
	\u^\sharp:=\sI_\lambda\big(\nabla \u\prec b_>-\nabla \u\Prec b_>+\nabla \u\succ b_>
	+\Gamma^b_\u-[\sL_\lambda,\nabla \u \Prec]\sI_\lambda (b_>)\big)
	$$ 
	with
	$$
	\Gamma^b_\u:=b_>\circ \nabla \u-\bar{b}_\leq\cdot(\nabla \u+\mI).
	$$
	Note that as in \eqref{FQ2}, 
	\begin{align*}
	\Gamma^b_\u&=b_>\circ(\nabla^2 \u\prec \sI_\lambda(b_>))+\mathrm{com}(\nabla \u,\nabla \sI_\lambda(b_>),b_>)
	\\&\quad+\mathrm{com}_1+b_>\circ \nabla \u^\sharp+\bar{b}_>\cdot(\nabla \u+\mI),
	\end{align*}
	where
	$$\mathrm{com}_1:=b_>\circ\nabla[\nabla \u\Prec \sI_\lambda (b_>)-\nabla \u\prec \sI_\lambda (b_>)].$$
	Let 
	$$
	\gamma,\beta\in(\bar\alpha,2-2\bar\alpha].
	$$
	Except for the last term $\bar{b}_>\cdot(\nabla \u+\mI)$, we estimate each term of $\Gamma^b_\u$ as in Lemma \ref{Le32} and obtain
	\begin{align*}
	\|\Gamma^b_\u\|_{L_T^\infty\bC^{1-2\bar\alpha}}
	&\lesssim \|b_>\|^2_{L^\infty_T\bC^{-\bar\alpha}}\|\u\|_{\mS_T^{\bar\alpha+\gamma}}
	+\|b_>\|_{L^\infty_T\bC^{-\bar\alpha}}\|\nabla\u^\sharp\|_{L^\infty_T\bC^\beta}
	\\&\quad	+\|\bar{b}_>\cdot(\nabla \u+\mI)\|_{L_T^\infty\bC^{1-2\bar\alpha}}
	\\&\lesssim\ell_T^b(\rho_\kappa)\Big(\|\u\|_{\mathbb{S}_T^{\bar\alpha+\gamma}}+1\Big)
	+\sqrt{\ell_T^b(\rho_\kappa)}\|\u^\sharp\|_{L_T^\infty\bC^{\beta+1}}+1,
	\end{align*}
	where we use \eqref{eq:b1}, \eqref{e:xi} and \eqref{GZ3}. 
	As in Lemma \ref{Le11a}, for any $\theta\in(1+\frac{3\bar\alpha}{2}, 2)$, there is a $C>0$ independent of $\lambda$ such that
	for all $\lambda\geq 1$,
	\begin{align*}
	\lambda^{1-\frac{\theta}{2}}\|\u\|_{\mathbb{S}_T^{\theta-\bar\alpha}}\leq C.
	\end{align*}
	Taking $\lambda$ being large enough, we get the first desired estimate. Then as in Lemma \ref{Le11a}  we obtain the second estimate. 
\end{proof}

Now, let us define
$$
\Phi(t,x):=x+\u(t,x).
$$
By Lemma \ref{lem:phi}, it is easy to see that for each $t\in[0,T]$ and $x,y\in\mR^d$,
\begin{align}\label{L4}
\tfrac12|x-y|\leq |\Phi(t,x)-\Phi(t,y)|\leq\tfrac32|x-y|
\end{align}
and
\begin{align}\label{L3}
\p_t\Phi=\Delta\Phi-\lambda\u+ (b_{>} -\bar{b}_\leq)\cdot\nabla\Phi.
\end{align}
In particular,
$$
x\mapsto\Phi(t,x)\mbox{ is a $C^1$-diffeomorphism.}
$$
Let $\Phi^{-1}(t,x)$ be the inverse of $x\mapsto \Phi(t,x)$ and define
$$
v(t,x):=u_2(t,\Phi^{-1}(t,x))\Rightarrow v(t,\Phi(t,x))=u_2(t,x),
$$
where $u_2$ solves HJB equation \eqref{L2}. 

In the rest of this subsection, with a little of confused notations, we also use $\circ$ to denote the composition of two functions.  
By the chain rule, we have
$$
\p_t v\circ\Phi+\p_t\Phi\cdot(\nabla v\circ\Phi)=\p_t u_2,\ \ \nabla u_2=\nabla\Phi\cdot(\nabla v\circ\Phi)
$$
and
$$
\Delta u_2=\Delta\Phi\cdot(\nabla v\circ\Phi)+\tr(\widetilde a\cdot\nabla^2 v\circ\Phi),
$$
where $\widetilde a_{ij}:=\sum_{k=1}^d(\p_k\Phi^i\p_k\Phi^j)$, which implies by \eqref{L2} and \eqref{L3} that
\begin{align*}
(\p_t v)\circ\Phi&=\tr(\widetilde a\cdot\nabla^2 v\circ\Phi)+H(u_1+u_2,\nabla u_1+\nabla u_2)
\\&\quad+((b_\leq+\bar b_\leq)\cdot\nabla\Phi+\lambda\u)\cdot(\nabla v\circ\Phi).
\end{align*}
Thus we obtain the following key lemma for solving HJB equation \eqref{L2}.
\bl
\label{Zvon} 
The $v$ defined above solves the following HJB equation:
\begin{equation}\label{eq:51}\aligned
\partial_t v=\mathrm{tr}\left(a\cdot \nabla^2v\right)+B\cdot\nabla v+\widetilde H(v,\nabla v),\quad v(0)=\varphi_2,
\endaligned\end{equation}
where $a_{ij}:=\sum_{k=1}^d(\p_k\Phi^i\p_k\Phi^j)\circ \Phi^{-1}$ and
$$
B:=((b_\leq +\bar{b}_\leq)\cdot\nabla \Phi+\lambda\u)\circ\Phi^{-1},
$$
and for $(t,x,v,Q)\in[0,T]\times\mR^d\times\mR\times\mR^d$,
$$
\widetilde H(t,x,v,Q):=H\big(t,\cdot, u_1(t,\cdot)+v, \nabla u_1(t,\cdot)+\nabla\Phi(t,\cdot)\cdot Q\big)\circ\Phi^{-1}(t,x).
$$
Moreover, $a$ satisfies {\bf (H$^{1-\bar\alpha}_a$)}, $B\in \mL^\infty_T(\rho_{\widetilde\kappa})$, and
under {\bf (H$^{\delta,\zeta}_{\rm sub}$)} or {\bf (H$^{\delta,\beta}_{\rm crit}$)} for $\beta\leq 1-\bar\alpha$, $\widetilde H$ still satisfies 
{\bf (H$^{2\delta,\zeta}_{\rm sub}$)} or {\bf (H$^{2\delta,\beta}_{\rm crit}$)}.
\el
\begin{proof}
	(i) By \eqref{L4} and \eqref{L5}, we have $\frac{1}{2}\mI\leq\widetilde a\leq 2\mI$ and
	\begin{align*}
	| a(t,x)- a(t,y)|&\lesssim|\nabla\u(t,\Phi^{-1}(t,x))-\nabla\u(t,\Phi^{-1}(t,y))|
	\\&\lesssim |\Phi^{-1}(t,x)-\Phi^{-1}(t,y)|^{1-\bar\alpha}\lesssim|x-y|^{1-\bar\alpha}.
	\end{align*}
	(ii) Note that for some $C\geq 1$,
	\begin{align}\label{L6}
	C^{-1}\<x\>\leq \<\Phi(t,x)\>\leq C\<x\>,\ \forall t\in[0,T].
	\end{align}
	The assertion $B\in \mL^\infty_T(\rho_{\widetilde\kappa})$ follows by \eqref{L5} and Lemma \ref{Le53}.
	\medskip\\
	(iii) We only check that under {\bf (H$^{\delta,\beta}_{\rm crit}$)},
	$\widetilde H$ satisfies {\bf (H$^{2\delta,\beta}_{\rm crit}$)}. For simplicity, we drop the time variable.
	By \eqref{HH2}, we have
	\begin{align*}
&	|H\big(x, u_1(x)+v, \nabla u_1(x)+\nabla\Phi(x)\cdot Q\big)|
\\	&\quad\leq c_2\<x\>^\delta+c'_3(|Q|^2+|\nabla u_1(x)|^2)
\leq c'_2\<x\>^{2\delta}+c'_3|Q|^2,
	\end{align*}
	where we used $u_1\in \mS^{2-\alpha}_T(\rho_\delta)$.
	By \eqref{HH1} and \eqref{DDA}, we have for $|x-y|\leq 1$, $\beta\leq 1-\bar\alpha$
	\begin{align*}
	&|H\big(x, u_1(x)+v, \nabla u_1(x)+\nabla\Phi(x)\cdot Q\big)-H\big(y, u_1(y)+v, \nabla u_1(y)+\nabla\Phi(y)\cdot Q\big)|
	\\&\quad\lesssim |x-y|^\beta\Big(\<x\>^\delta+\<y\>^\delta+|u_1(x)+v|^2+|\nabla u_1(x)+\nabla\Phi(x)\cdot Q|^2\Big)
	\\&\qquad+|u_1(x)-u_1(y)|\Big(\<y\>^\delta+|v|+|u_1(x)|+|u_1(y)|+|\nabla u_1(x)|+|Q|\Big)
	\\&\quad\qquad+(|\nabla u_1(x)-\nabla u_1(y)|+|\nabla\Phi(x)-\nabla\Phi(y)||Q|)
	\\&\quad\qquad\times (\<y\>^\delta+|u_1(y)|+|v|+|\nabla u_1(x)|+|\nabla u_1(y)|+|Q|)
	\\&\quad\lesssim |x-y|^{\beta}(\<x\>^{2\delta}+\<y\>^{2\delta}+|v|^2+|Q|^2).
	\end{align*}
	Furthermore, we have
	\begin{align*}
	&|\p_v H\big(x, u_1(x)+v, \nabla u_1(x)+\nabla\Phi(x)\cdot Q\big)|
\\&\quad	\lesssim \<x\>^\delta+ |u_1(x)|+|v|+|\nabla u_1(x)|+|Q|\lesssim\<x\>^\delta+|v|+|Q|.
	\end{align*}
Therefore, $\widetilde H$ satisfies {\bf (H$^{2\delta,\beta}_{\rm crit}$)} by definition and \eqref{L4}, \eqref{L6}.
\end{proof}	
	
\subsection{Proof of Theorem \ref{existence}}\label{sec:5.2}


We first use Lemma \ref{Zvon} and Theorem \ref{Th42} to derive the following a priori estimate.
\bl
Under \eqref{MN2}, there is a constant $C=C(\Theta)>0$ such that
\begin{align}\label{BZ11}
\|u\|_{\mL^\infty_T(\rho_{2\delta})}+\|u\|_{\mathbb{S}_T^{2-\bar\alpha}(\rho_\eta)}\leq C.
\end{align}
\el
\begin{proof}
By \eqref{L1}, \eqref{L2} and Theorem \ref{Th33}, it suffices to prove that
\begin{align}\label{BZ1}
\|u_2\|_{\mL^\infty_T(\rho_{2\delta})}+\|u_2\|_{\mathbb{S}_T^{2-\bar\alpha}(\rho_\eta)}\lesssim 1.
\end{align}
By Lemma \ref{Zvon} and Theorem \ref{Th42}, for any $p$ large enough and $\eta$ depending on $\kappa, \alpha,\delta$
\begin{align}\label{bd:v}
\|v\|_{\mL_T^\infty(\rho_{2\delta})}+
\|\p_tv\|_{\mL_T^p(\rho_\eta)}+\|v\|_{\mH^{2,p}_T(\rho_\eta)}\lesssim 1.
\end{align}
which implies by  \cite[Lemma 2.3]{CZ16},
\begin{align}\label{bd:v0}
\|v\|_{C_T^{(2-\bar\alpha)/2}L^\infty(\rho_\eta)}\lesssim1.
\end{align}
By \eqref{L6}, we have
	\begin{align*}
	\|u_2\|_{\mL^\infty_T(\rho_{2\delta})}=\|v(\Phi)\rho_{2\delta}\|_{\mL^\infty_T}
	\asymp\|v(\Phi)\rho_{2\delta}(\Phi)\|_{\mL^\infty_T}=\|v\|_{\mL^\infty_T(\rho_{2\delta})},
	\end{align*}
	and by \eqref{GZ3}, \eqref{bd:v} and \eqref{L5},
	\begin{align*}
	&\|\nabla u_2\|_{L^\infty_T\bC^{1-\bar\alpha}(\rho_\eta)}
	=\|\nabla v\circ\Phi \cdot\nabla\Phi\|_{L^\infty_T\bC^{1-\bar\alpha}(\rho_\eta)}
	\\&\qquad\lesssim\|\nabla v(\Phi)\|_{L^\infty_T\bC^{1-\bar\alpha}(\rho_\eta)}
	\|\nabla\Phi\|_{L^\infty_T\bC^{1-\bar\alpha}}
	\\&\qquad\lesssim\|\nabla v\|_{L^\infty_T\bC^{1-\bar\alpha}(\rho_\eta)}
	\|\u\|_{L^\infty_T\bC^{2-\bar\alpha}}\lesssim 1.
	\end{align*}
	Here we used \eqref{eq} and \eqref{L6}, \eqref{L4} to conclude that for $|x-y|\leq 1$,
	\begin{align*} \rho_\eta(x)|\nabla v(\Phi(x))-\nabla v(\Phi(y))|&\lesssim\rho_\eta(\Phi(x))|\nabla v(\Phi(x))-\nabla v(\Phi(y))|
	\\&\lesssim |\Phi(x)-\Phi(y)|^{1-\bar\alpha}\|\nabla v\|_{L^\infty_T\bC^{1-\bar\alpha}(\rho_\eta)}.
	\end{align*}
	Moreover, note that by \eqref{L6},
	\begin{align*}
	\|u_2(t)-u_2(s)\|_{L^\infty(\rho_\eta)}
	&\lesssim \|v(t,\Phi(t))-v(t,\Phi(s))\|_{L^\infty(\rho_\eta)}+\|v(t)-v(s)\|_{L^\infty(\rho_\eta)}\\
	&\leq \|\Phi(t)-\Phi(s)\|_{L^\infty}\int^1_0\|\nabla v(t,\Gamma^{t,s}_r)\|_{L^\infty(\rho_\eta)}\dif r\\
	&\quad+\|v(t)-v(s)\|_{L^\infty(\rho_\eta)},
	\end{align*}
	where $\Gamma^{t,s}_r(x):=r\Phi(t,x)+(1-r)\Phi(s,x)$.
	Since for any $r\in[0,1]$ and $t,s\in[0,T]$,
	$$
	\Gamma^{t,s}_r(x)=x+r\u(t,x)+(1-r)\u(s,x),
	$$
	by \eqref{L5}, we have
	$$
	\rho_\eta(\Gamma^{t,s}_r(x))\asymp\rho_\eta(x).
	$$
	Hence, by \eqref{L5} and \eqref{bd:v0},
	$$
	\frac{\|u_2(t)-u_2(s)\|_{L^\infty(\rho_\eta)}}{|t-s|^{(1-\bar\alpha)/2}}\lesssim 1.
	$$
	Combining the above estimates, we obtain \eqref{BZ1}. The proof is complete.
\end{proof}
Next we apply \eqref{BZ11}, \eqref{eq:ansatz} and \eqref{eq:sharp} to derive the following a priori estimate for $u^\sharp$ as done in Lemma \ref{Le32}.
\bl
Under \eqref{MN2}, there is a constant $C=C(\Theta)>0$ such that
\begin{align}\label{BZ2}
\|u^\sharp\|_{\mL_T^\infty (\rho_{2\delta+\kappa})}+\|u^\sharp\|_{\mathbb{S}_T^{3-2\bar\alpha}(\rho_{2\eta})}\leq C.
\end{align}
\el
\begin{proof}
	First of all, by \eqref{eq:ansatz} and \eqref{BZ11}, we have
	\begin{equation}\label{b:sharp}
	\|u^\sharp\|_{\mL_T^\infty (\rho_{2\delta+\kappa})}
	+\|u^\sharp\|_{L^\infty_T\bC^{2-\bar\alpha}(\rho_{\eta+\kappa})}\lesssim1.
	\end{equation}
	Next we estimate each term on the right hand side of \eqref{eq:sharp} by using Lemma \ref{lem:para}.
	\begin{enumerate}[$\bullet$]
		\item By \eqref{GA4}, \eqref{*}, 
		and $\bar\alpha=\alpha+\widetilde\kappa$, we have
		$$
		\|\nabla u\prec b-\nabla u\Prec b\|_{L^\infty_T\bC^{1-2\bar\alpha}(\rho_{\eta+\kappa})}\lesssim 
		\|u\|_{\mS_T^{2-\bar\alpha}(\rho_{\eta})}\|b\|_{L^\infty_T\bC^{-\alpha}(\rho_{\kappa})}\lesssim 1.
		$$
		\item By \eqref{GZ1} we have
		$$
		\|\nabla u\succ b\|_{L_T^\infty\bC^{1-2\bar\alpha}(\rho_{\eta+\kappa})}\lesssim 
		\|u\|_{L_T^\infty\bC^{2-\bar\alpha}(\rho_{\eta})}\|b\|_{L_T^\infty\bC^{-\alpha}(\rho_\kappa)}\lesssim1.
		$$
		\item By \eqref{GA3} and \eqref{EG01} we have
		$$
		\|[\sL,\nabla u\Prec]\sI b\|_{L_T^\infty\bC^{1-2\bar\alpha}(\rho_{\eta+\kappa})}\lesssim 
		\|u\|_{\mS_T^{2-\bar\alpha}(\rho_{\eta})}\|b\|_{L_T^\infty\bC^{-\alpha}(\rho_\kappa)}\lesssim 1.
		$$
		\item By the growth of $H$ and \eqref{BZ11}, we have
		\begin{align*}
		\|H(u,\nabla u)\|_{\mL^\infty_T(\rho_{2\eta})}
		\lesssim 1+\|\nabla u\|_{\mL^\infty_T(\rho_{\eta})}^2\lesssim 1.
		\end{align*}
		\item By Lemma \ref{Le32} with $\gamma=2-2\bar\alpha$, $\beta\in(\bar\alpha,2-2\bar\alpha)$, we have
		\begin{align*}
		\|b\circ\nabla u\|_{L_T^\infty\bC^{1-2\bar\alpha}(\rho_{2\eta})}\lesssim \|u\|_{\mS_T^{2-\bar\alpha}(\rho_{2\eta-2\kappa})}
		+\|u^\sharp\|_{L_T^\infty\bC^{\beta+1}(\rho_{2\eta-\kappa})}+1,
		\end{align*}
		and by interpolation inequality \eqref{DQ1} with $\theta=\frac{\eta-2\kappa}{\eta-\kappa}$, \eqref{b:sharp} and Young's inequality,
		\begin{align*}
		\|u^\sharp\|_{L_T^\infty\bC^{\beta+1}(\rho_{2\eta-\kappa})}
		&\lesssim \|u^\sharp\|_{L_T^\infty\bC^{3-2\bar\alpha}(\rho_{2\eta})}^{\theta}
		\|u^\sharp\|_{L^\infty_T\bC^{2-\bar\alpha}(\rho_{\eta+\kappa})}^{1-\theta}
		\\&\lesssim \eps\|u^\sharp\|_{L_T^\infty\bC^{3-2\bar\alpha}(\rho_{2\eta})}+1,
		\end{align*}
		where we choose $\beta$ such that $\beta\leq (1-\bar\alpha)(\theta+1)$ since $\kappa$ is small enough.
	\end{enumerate}
	Combining the above calculations and by \eqref{EG1} with $\theta=2$ and $q=\infty$, we obtain
	\begin{align*}
	\|u^\sharp\|_{\mathbb{S}_T^{3-2\bar\alpha}(\rho_{2\eta})}\lesssim \eps\|u^\sharp\|_{L_T^\infty\bC^{3-2\bar\alpha}(\rho_{2\eta})}+1,
	\end{align*}
	which in turn implies the desired estimate.
\end{proof}

Now we are in a position to give
\begin{proof}[Proof of Theorem \ref{existence}]
{\bf (Existence)} 
	By \eqref{BZ11} and \eqref{BZ2}, we obtain the uniform estimate \eqref{UNI}. Now by Ascoli-Arzel\`{a}'s lemma, there are a subsequence
	still denoted by $n$ and 
	$$
	(u, u^\sharp)\in \mathbb{S}_T^{2-\bar\alpha}(\rho_{\eta})	\times \mathbb{S}_T^{3-2\bar\alpha}(\rho_{2\eta})
	$$
	such that for each $\gamma>0$,
	$$
	(u_n, u^\sharp_n)\to (u, u^\sharp)\ \mbox{\rm in }\mathbb{S}_T^{2-\bar\alpha-\gamma}(\rho_{\eta+\gamma})
	\times \mathbb{S}_T^{3-2\bar\alpha-\gamma}(\rho_{2\eta+\gamma}).
	$$
By \eqref{LL9} and taking weak limits for approximation equation 
\eqref{eq:ansatz} and \eqref{eq:sharp} with $(b,f)$ being replaced by $(b_n,f_n)$, one sees that $(u, u^\sharp)$ 
solves \eqref{eq:ansatz} and \eqref{eq:sharp} (see \cite{GH18} for more details).
	\medskip\\
	{\bf (Uniqueness)} 
	 Let $u, \bar u$ be two  paracontrolled solutions to \eqref{HJB0} in the sense of Theorem \ref{existence} starting from the same initial value.
	Let $U:=u-\bar u$. It is easy to see that $U$ is a paracontrolled solution to the following linear equation
	\begin{equation}\label{eq:U}
	\partial_tU=\Delta U+(b+R)\cdot \nabla U+K\cdot U,\quad U(0)=0,
	\end{equation}
where
	$$R:=\int_0^1\nabla_QH(u,\nabla u+s\nabla(\bar u-u))\dif s,$$
	$$K:=\int_0^1\p_uH(u+s(\bar u-u),\nabla \bar u)\dif s.$$
	Note that by \eqref{DDA} and $u,\bar u\in \mathbb{S}_T^{2-\bar\alpha}(\rho_\eta)$,
	$$
	|R|+|K|\lesssim\rho^{-1}_\delta+|u|+|\bar u|+|\nabla\bar u|+|\nabla u|\lesssim \rho^{-1}_\eta.
	$$Then uniqueness follows from Theorem \ref{Th72}.
\end{proof}

\section{Application to KPZ equations}\label{sec:kpz}

Consider the following KPZ equation:
\begin{equation}\label{eq:kpz}
\sL h=(\p_x h)^{\diamond2}+\xi,\quad h(0)=h_0
\end{equation}
where  $\xi$ is a space-time white noise on $\mathbb{R}^+\times \R$ 
on some stochastic basis $(\Omega,\sF,(\sF_t)_{t\geq 0}, \mP)$. Here the nonlinear term 
$(\p_x h)^{\diamond2}=``(\p_x h)^{2}-\infty"$ with $\infty=\lim_{n\to\infty}\tone{c_n}$ for $\tone{c_n}$ and the approximation $\xi_n$ below. 
We define the $2n$ periodization of $\xi$ by 
$$\tilde{\xi}_n(\psi)=\xi(\psi_n) \textrm{ where } \psi_n(t,x)=\mathbf{1}_{ [-n,n)}(x)\sum_{y\in 2n\mathbb{Z}}\psi(t,x+y).$$ 
Let $\varphi\in C^\infty_c(\R)$ be even and such that 
$\varphi(0)=1$ and define the spatial regularization of $\tilde{\xi}_n$
$$\xi_n=\varphi(n^{-1}\p_x)\tilde{\xi}_n=\mathcal{F}^{-1}(\varphi(n^{-1}\cdot)\mathcal{F}\tilde{\xi}_n).$$
The regularity of the space-time white noise $\xi$ is more rough than the coefficient $f$ given in \eqref{eq:1}. To apply Theorem \ref{existence} we need to introduce the following random fields and use  Schauder estimate to decompose \eqref{eq:kpz} into \eqref{eq:1} and the following equations.  This is the usual way for KPZ equation (cf. \cite{Hai13,GP17,PR18}).

Define 
\begin{equation}
\begin{aligned}
\sL Y_{n} & = \xi_{n} & &  \sL Y=\xi
\\\sL\tone{Y_{n}} & =  (\p_xY_n)^2-\tone{c_n}
& &\sL\ttwo{Y_{n}}  =  2\p_xY_n\p_x\tone{Y}_n 
\\
\sL\tthree{Y_{n}} &=  2\p_x\ttwo{Y_n}\circ\p_x{Y}_n+\tfour{c_n} &  &
\sL\tfour{Y_{n}}  =  (\p_x\tone{Y}_n)^2-\tfour{c_n}\\
\sL\tze{Y_n}&=\p_xY_n,&
\end{aligned} \label{e:trees-def}
\end{equation}
all with zero initial conditions except  $Y(0)(x)=Cx+B(x)$ and $Y_n(0)$ defined similarly as $\xi_n$ with $\xi$ replaced by $Cx+B(x)$, 
where $B$ is a two sided Brownian motion, which is independent of space-time white noise $\xi$, and $C\in \R$. The choice of the initial condition is due to our interest in the KPZ equation starting from its invariant measure (cf. \cite[Section 1.4]{QS15} and \cite{FQ15}). Here $\tone{c_n}$ and $\tfour{c_n}$ are renormalization constants. 
We also set
$$
X_n=\p_xY_n, \quad X=\p_x Y, \quad X^{(\cdot)}=\p_xY^{(\cdot)},
$$
where $(\cdot)$ stands for the above tree. In the following we draw a table 
for the regularity of each $Y^{(\cdot)}$.
For $\gamma>0$ the homogeneities $\alpha_\tau \in \mathbb{R}$ are
given by
$$ \begin{array}{|c|c|c|c|c|c|c|c|c|c|c|c|}
\hline
{\tau} & = & {Y} &
\tone{Y} & \ttwo{Y} &
\tthree{Y} &
\tfour{Y}\\
\hline
\alpha_\tau & = & \frac12-\gamma & 1-\gamma & \frac32-\gamma & 2-\gamma & 2-\gamma \\
\hline
{\tau} & =& X &\tze{Y}&\p_x\tze{Y}\circ \p_xY &\sL \tthree{Y}&\sL \tfour{Y}\\
\hline
\alpha_\tau & =& -\frac12-\gamma&\frac32-\gamma &-\gamma&-\gamma&-\gamma\\
\hline
\end{array} $$

\bl\label{lem:re}
With the above notations, there exist random distributions 
$$
\sY:=\Big\{\tone{Y}, \ttwo{Y},\tthree{Y},\tfour{Y},X,\tze{Y},\p_x\tze{Y}\circ \p_x Y,\sL \tthree{Y},\sL \tfour{Y} \Big\}
$$
and divergence constants $\tone{c_n}$, $\tfour{c_n}$ such that for every $\tau\in \sY$,
$$
\tau\in \cap_{\kappa>0}\mathbb{S}_T^{\alpha_\tau}(\rho_\kappa),
$$
for $\alpha_\tau$ given in the above table. Moreover, for $\tau_n$ defined in \eqref{e:trees-def} $\tau_n\to \tau$ in $L^p(\Omega,\mathbb{S}_T^{\alpha_\tau}(\rho_\kappa))$ for every $p\in [1,\infty)$ and every $\kappa>0$. Furthermore, $Y_n\to Y$ in $L^p(\Omega,\mathbb{S}_T^{\frac12-\gamma}(\rho_{1+\kappa})$ for every $p\in [1,\infty)$. Moreover, there exist random distribution $\nabla \sI^{t}_s(X)\circ X$ such that 
$$\sup_{0\leq s\leq t\leq T}\|\nabla \sI^{t}_s(X_n)\circ X_n(t)-\nabla \sI^{t}_s(X)\circ X(t)\|_{\bC^{-\gamma}(\rho_\kappa)}\to0 \text{ in }L^p(\Omega).$$
\el
\begin{proof}
	Most terms except $\sL \tthree{Y},\sL \tfour{Y}$ in \eqref{e:trees-def} have been considered in \cite[Theorem 3.6]{PR18}. These two terms can also been obtained by similar calculation as in \cite[Theorem 9.3]{GP17} (see also \cite[Section 3.3.1, Section A.2]{ZZ15}). The last convergence result for $\nabla \sI^{t}_s(X)\circ X(t)$ can be obtained similarly as in \cite[Lemma C.1]{PR18}. 
\end{proof}

We make the following decomposition 
$$
h=Y+\tone{Y}+\ttwo{Y}+\widetilde h,
$$
where $\widetilde h$ satisfies the following equation
\begin{equation}\label{e:h1}
\left\{\aligned
\sL \widetilde h&=2\p_x\widetilde h(X+\tone{X}+\ttwo{X})+(\p_x\widetilde h)^2+\sL\tthree{Y}+\sL\tfour{Y}
\\&\quad+(\ttwo{X})^2+2\ttwo{X}\tone{X}+2(X\ttwo{X}-X\circ\ttwo{X}),
\\\widetilde h(0)&=h_0-Y(0).\endaligned
\right.
\end{equation}
Here we use \eqref{e:trees-def}.

Using Lemma \ref{lem:re}, we obtain
\bl\label{lem:z1} There exists a measurable set $\Omega_0$ with $\mP(\Omega_0)=1$ such that for every  $\kappa>0$, $\gamma>0$  and $\omega\in \Omega_0$
$$b:=2\p_x(Y+\tone{Y}+\ttwo{Y})\in L_T^\infty\bC^{-\frac{1}{2}-\gamma}(\rho_\kappa),$$
$$f:=\sL\tthree{Y}+\sL\tfour{Y}+(\ttwo{X})^2
+2\ttwo{X}\tone{X}+2(X\ttwo{X}-X\circ\ttwo{X})\in L_T^\infty\bC^{-\frac{1}{2}-\gamma}(\rho_\kappa).$$
\el
\begin{proof}
	We use Lemma \ref{lem:para} and \eqref{EG1} to have that 
	$$\|(\ttwo{X})^2\|_{\bC^{\frac12-\gamma}(\rho_\kappa)}\lesssim
	\|\ttwo{X}\|_{\bC^{\frac12-\gamma}(\rho_{\kappa/2})}^2,
	$$ $$\|\ttwo{X}\tone{X}\|_{\bC^{-\gamma}(\rho_\kappa)}\lesssim
	\|\ttwo{X}\|_{\bC^{\frac12-\gamma}(\rho_{\kappa/2})}
	\|\tone{X}\|_{\bC^{-\gamma}(\rho_{\kappa/2})},$$
	and 
	$$X\ttwo{X}-X\circ\ttwo{X}=X\succ\ttwo{X}-X\prec\ttwo{X},$$
	to have 
	$$\|X\ttwo{X}-X\circ\ttwo{X}\|_{\bC^{-\frac12-\gamma}(\rho_{\kappa})}
	\lesssim \|X\|_{\bC^{-\frac12-\gamma}(\rho_{\kappa/2})}
	\|\ttwo{X}\|_{\bC^{\frac12-\gamma}(\rho_{\kappa/2})}.$$
	Other terms follows directly from Lemma \ref{lem:re}. 
\end{proof}

As a result $\widetilde h$ satisfies \eqref{eq:1} with $b, f$ given above.  We say that $h$ is a paracontrolled solution to \eqref{eq:kpz} if $\widetilde h$ is a paracontrolled solution to \eqref{e:h1} in the sense of \eqref{eq:ansatz} and \eqref{eq:sharp}.  

Since $\gamma$ can be arbitrary small,
we apply Theorem \ref{main} to obtain
\bt\label{th:k} For every initial condition $\widetilde h(0)\in \bC^{\frac32+2\eps}(\rho_{\eps\delta})$ 
for $\eps>0, 0<\delta:=40\kappa<1$, 
there exists a unique paracontrolled solution 
$$(\widetilde h,\widetilde h^{\sharp})\in(\mathbb{S}^{\frac32-2\kappa^{1/4}}_T(\rho_\eta)\cap \mL^\infty_T(\rho_{2\delta}),\mS_T^{2-3\kappa^{1/4}}(\rho_{2\eta})\cap\mL^\infty_T(\rho_{2\delta+\kappa}))$$   to \eqref{e:h1},
where $$2\Big[(100\kappa)\vee ({\kappa^{1/4}}+80\kappa)\Big]<\eta<\frac14.$$
\et
\begin{proof}
	
	In the following we check other conditions of Theorem \ref{main}. The condition for $H$ is satisfied easily. In the following we prove $(b,f)\in \mathbb{B}^\alpha_T(\rho_\kappa)$. The approximation $\{(b_n,f_n)\}_n$ for $(b,f)$ is given as in  Lemma   \ref{lem:z1} with the corresponding tree $\tau$ replaced by $\tau_n$ in Lemma \ref{lem:re}. In the following we prove that for every $\kappa>0$
	\begin{align}\label{eq:k}
	\sup_n(\ell_T^{b_n}(\rho_\kappa)+\mA_{T,\infty}^{b_n,f_n}(\rho_\kappa))<\infty,
	\end{align}
	with $\ell_T^{b_n}(\rho_\kappa)$ and $\mA_{T,\infty}^{b_n,f_n}(\rho_\kappa))$ defined in \eqref{AA9} and \eqref{AAb}, respectively. In the following we omit the subscript $n$ for simplicity and all the following bounds are uniform in $n$ and $\lambda$. 
	We  first consider
	\begin{align*}\frac1{4}\nabla \sI_\lambda(b)\circ b=&\nabla\sI_\lambda(\p_x(Y+\tone{Y}+\ttwo{Y}))\circ \p_x(Y+\tone{Y}+\ttwo{Y})
	.\end{align*}
	By the last result in Lemma \ref{lem:re} and  Lemma \ref{lem:lambda} we deduce the first term 
	$$\|\nabla\sI_\lambda(\p_x Y)\circ \p_xY\|_{ L_T^\infty\bC^{-\gamma}(\rho_{\kappa})}\lesssim1.$$
	Other terms on the right hand side can be calculated by 
	Lemma \ref{lem:para} and \eqref{EG1} to have
	\begin{align*}
	&\|\nabla\sI_\lambda(\p_x(\tone{Y}+\ttwo{Y}))\circ b\|_{L_T^\infty\bC^{-\gamma}(\rho_{2\kappa})}
	\\\lesssim& (\|\tone{Y}\|_{L_T^\infty\bC^{1-\gamma}(\rho_{\kappa})}+\|\ttwo{Y}\|_{L_T^\infty\bC^{\frac32-\gamma}(\rho_{\kappa})})
	\|b\|_{L_T^\infty\bC^{-\frac12-\gamma}(\rho_{\kappa})}\lesssim1,
	\end{align*}
	and 
	\begin{align*}
	&\|\nabla \sI_\lambda(\p_x Y)\circ \p_x(\tone{Y}+\ttwo{Y})\|_{L_T^\infty\bC^{-\gamma}(\rho_{2\kappa})}
	\\\lesssim& \|Y\|_{L_T^\infty\bC^{\frac12-\gamma}(\rho_{\kappa})}
	(\|\tone{Y}\|_{L_T^\infty\bC^{1-\gamma}(\rho_{\kappa})}
	+\|\ttwo{Y}\|_{L_T^\infty\bC^{\frac32-\gamma}(\rho_{\kappa})})\lesssim1.
	\end{align*}

	On the other hand, we know
	\begin{align*}\nabla \sI_\lambda(f)\circ b=&\nabla\sI_\lambda(f_1)\circ b+\nabla\sI_\lambda(\ttwo{X}\prec X)\circ 2(X+\tone{X}+\ttwo{X}),
	\end{align*}
	with $f_1=f-\ttwo{X}\prec X\in L_T^\infty\bC^{-2\gamma}(\rho_\kappa)$. By Lemma \ref{lem:para} and \eqref{EG1} we know 
	\begin{align*}\|\nabla \sI_\lambda(f_1)\circ b\|_{L_T^\infty\bC^{-\gamma}(\rho_{2\kappa})}\lesssim
	\|f_1\|_{L_T^\infty\bC^{-2\gamma}(\rho_{\kappa})}
	\|b\|_{L_T^\infty\bC^{-\frac12-\gamma}(\rho_{\kappa})}\lesssim1
	,
	\end{align*}
	\begin{align*}
	&\|\nabla\sI_\lambda(\ttwo{X}\prec X)\circ(\tone{X}+\ttwo{X})\|_{L_T^\infty\bC^{-\gamma}(\rho_{2\kappa})}\\\lesssim& \|\ttwo{X}\|_{L_T^\infty\bC^{\frac12-\gamma}(\rho_{\kappa})} \|X\|_{L_T^\infty\bC^{-\frac12-\gamma}(\rho_{\kappa})}(\|\tone{X}\|_{L_T^\infty\bC^{-\gamma}(\rho_{\kappa})}+\|\ttwo{X}\|_{L_T^\infty\bC^{\frac12-\gamma}(\rho_{\kappa})})\lesssim1.
	\end{align*}
	It remains to consider the term $\nabla\sI_\lambda(\ttwo{X}\prec X)\circ X$ and we use the commutator introduced in Lemma \ref{lem:com2} and Lemma \ref{lem:5.1} to have
	\begin{align*}
	\nabla\sI_\lambda(\ttwo{X}\prec X)\circ X=&([\nabla \sI_\lambda,\ttwo{X}\prec]X)\circ X\\&+\mathrm{com}(\ttwo{X},\nabla\sI_\lambda(X),X)
	+\ttwo{X}(\nabla\sI_\lambda(X)\circ X).
	\end{align*}
	By Lemmas \ref{lem:5.1}, \ref{lem:com2} and Lemma \ref{lem:re} 
	$$\|\nabla\sI_\lambda(\ttwo{X}\prec X)\circ X\|_{L_T^\infty\bC^{-\gamma}(\rho_\kappa)}\lesssim1,$$
	where we used time regularity of $\ttwo{X}$, which follows from \eqref{*}. Combining all the above estimates, we deduce \eqref{eq:k} follows. Furthermore, we know that the convergence in Definition \ref{Def216} also holds by using Lemma \ref{lem:re} and Lemma \ref{lem:lambda}, which gives that $(b,f)\in \mB_T^\alpha(\rho_\kappa)$. 	Then the result follows from Theorem \ref{main}.	
\end{proof}

\br\rm
The exponent $\eta$ of the weight could be arbitrary small since $\kappa$ is arbitrary small. This result improves the weight  for the solution of the KPZ equation obtained in \cite{PR18}.
\er

 \appendix
\renewcommand{\appendixname}{Appendix~\Alph{section}}
\renewcommand{\theequation}{A.\arabic{equation}}
\section{Uniqueness of paracontrolled solutions}\label{sec:7.1}
In this subsection we use Hairer and Labb\'e's argument \cite{HL18} to show the uniqueness of paracontrolled solutions. For this aim, 
we use the following time-dependent exponential weight: for $\ell\in(0,1)$,
$$ 
{\bf e}^\ell_t(x):=\exp(-(1+t)\<x\>^\ell),\ t\geq0,\ x\in\mR^d.
$$
We can similarly define the H\"older space with weight ${\bf e}^\ell$ (see \cite{PR18}). For instance,
$$
\|f\|_{L^\infty_T\bC^\alpha({\bf e}^\ell)}:=\sup_{t\in[0,T]}\|f(t,\cdot)\|_{\bC^\alpha({\bf e}^\ell_t)},
$$
and for $\alpha\in(0,1)$,
\begin{align*}
\|f\|_{C^\alpha_TL^\infty({\bf e}^\ell)}&:=
\sup_{0\leq t\leq T} \|f(t){\bf e}^\ell_t\|_{L^\infty}
+\sup_{0\leq s\neq t\leq T} \frac{\|f(t)-f(s)\|_{L^\infty({\bf e}^\ell_{t\vee s})}}{|t-s|^{\alpha}}.
\end{align*}
In particular, for $\alpha\in(0,2)$, we also set
$$
\mS^\alpha_T({\bf e}^\ell):=\|f\|_{L^\infty_T\bC^\alpha({\bf e}^\ell)}+\|f\|_{C^{\alpha/2}_TL^\infty({\bf e}^\ell)}.
$$
By \cite[Lemma 2.10]{MW17}, for any $T>0$, there is a $C=C(T,\ell,d)>0$ such that for all $s,t\in[0,T]$ and $j\geq -1$,
\begin{align}\label{DK1}
\|\Delta_jP_tf\|_{L^\infty({\bf e}^\ell_s)}\lesssim \e^{-2^{2j}t}\|\Delta_jf\|_{L^\infty({\bf e}^\ell_s)}.
\end{align}
Moreover, Lemmas \ref{lem:2.8}, \ref{lem:para}, \ref{lem:com2} and \ref{lem:5.1} still hold for exponential 
weight ${\bf e}^\ell_t$ (see \cite{PR18}).
The following result corresponds to Lemma \ref{Le11}.

\bl\label{graa}
Let $\alpha,\ell\in(0,1)$, $\kappa\in(0,(1-\frac{\alpha}2)\ell)$. 
For any $q\in(\frac{1}{1-\alpha/2-\kappa/\ell},\infty]$ and $T>0$, there  is a constant $C=C(T, d,\alpha,\ell,\theta,\kappa,q)>0$ such that
$$
\|\sI f\|_{\mS_T^{2-\frac2q-\frac{2\kappa}\ell-\alpha}({\bf e}^\ell)}\lesssim_C \|f\|_{L^q_T\bC^{-\alpha}(\rho_{\kappa}{\bf e}^\ell)}.
$$
\el
\begin{proof} 
First of all we have the following simple observation:
\begin{align}\label{GM1}
{\bf e}^\ell_t(x)\lesssim \<x\>^{-\kappa} {\bf e}^\ell_s(x)/|t-s|^{\kappa/\ell},\ \ 0\leq s<t<\infty.
\end{align}
Let $\frac{1}{p}+\frac{1}{q}=1$ and $t\in(0,T]$.  By \eqref{DK1}  and H\"older's inequality, we have for $j\geq-1$,
	\begin{align*}
	\|\Delta_j\sI f(t)\|_{L^\infty({\bf e}^\ell_t)}
&\lesssim\int^t_0\e^{-2^{2j}(t-s)}\|\Delta_jf(s)\|_{L^\infty({\bf e}^\ell_t)}\dif s
	\\&\lesssim \int^t_0\frac{\e^{-2^{2j}(t-s)}}{|t-s|^{\kappa/\ell}}\|\Delta_jf(s)\|_{L^\infty(\rho_\kappa{\bf e}^\ell_s)}\dif s
	\\&\lesssim 2^{\alpha j}\left(\int^t_0\frac{\e^{-p2^{2j}(t-s)}}{|t-s|^{p\kappa/\ell}}\dif s\right)^{1/p}
	\|f\|_{L^q_t\bC^{-\alpha}(\rho_\kappa{\bf e}^\ell)}\\
	&\lesssim 2^{-(\frac2p-\frac{2\kappa}\ell-\alpha) j}\|f\|_{L^q_t\bC^{-\alpha}(\rho_\kappa{\bf e}^\ell)},
	\end{align*}
	which in turn gives that
\begin{align}\label{GFM}
	\|\sI f\|_{L^\infty_T\bC^{2-\frac2q-\frac{2\kappa}\ell-\alpha}({\bf e}^\ell)}
\lesssim \|f\|_{L^q_T\bC^{-\alpha}(\rho_{\kappa}{\bf e}^\ell)}.
\end{align}
On the other hand, for $0\leq t_1<t_2\leq T$, we have
	\begin{align*} 
	\|\sI f(t_2)- \sI f(t_1)\|_{L^\infty({\bf e}^\ell_{t_2})}
	&\leq\|(P_{t_2-t_1}-I)\sI f(t_1)\|_{L^\infty({\bf e}^\ell_{t_2})}\\
	&\quad+\left\|\int_{t_1}^{t_2}P_{t_2-s}f(s) \dif s\right\|_{L^\infty({\bf e}^\ell_{t_2})}=:I_1+I_2.
	\end{align*}
	For $I_1$, by \eqref{E2} and \eqref{GFM} we have
\begin{align*}
I_1&\lesssim(t_2-t_1)^{1-\frac\alpha 2-\frac{1}{q}- \frac{\kappa}{\ell}}
	\|\sI f(t_1)\|_{\bC^{2-\alpha-\frac{2}{q}- \frac{2\kappa}{\ell}}({\bf e}^\ell_{t_2})}
\\&\lesssim(t_2-t_1)^{1-\frac\alpha 2-\frac{1}{q}- \frac{\kappa}{\ell}}\|f\|_{L^q_T\bC^{-\alpha}(\rho_{\kappa}{\bf e}^\ell)}.
\end{align*}
For $I_2$, by \eqref{E1}, \eqref{GM1} and H\"older's inequality, we have
\begin{align*}
I_2&\lesssim\int_{t_1}^{t_2}(t_2-s)^{-\frac\alpha2}\|f(s)\|_{\bC^{-\alpha}({\bf e}^\ell_{t_2})}\dif s
	\\&\lesssim\int_{t_1}^{t_2}(t_2-s)^{-\frac\alpha2-\frac\kappa\ell}\|f(s)\|_{\bC^{-\alpha}(\rho_\kappa{\bf e}^\ell_s)}\dif s
	\\&\lesssim(t_2-t_1)^{1-\frac\alpha 2-\frac{1}{q}- \frac{\kappa}{\ell}}\|f\|_{L^q_T\bC^{-\alpha}(\rho_{\kappa}{\bf e}^\ell)}.
	\end{align*}
Combining the above estimates, we obtain the desired estimate. 
\end{proof}

Now we consider the following linear equation:
\begin{align}\label{PDE9}
\sL u=(b+\bar b)\cdot\nabla u+hu,\ \ u(0)\equiv 0,
\end{align}
where $b\in \cap_{T>0}\mB^\alpha_T(\rho_\kappa)$ and $\bar b,h\in \cap_{T>0}L^\infty_T(\rho_\eta)$.
Let 
$$
(u,u^\sharp)\in \cap_{T>0}\mS^{2-\alpha}_T(\rho_\eta)\times \mS^{3-2\alpha}_T(\rho_{2\eta})
$$ 
be the paracontrolled solution of PDE \eqref{PDE9}. That is,
\begin{align}\label{DT01}
u=\nabla u\Prec  \sI b+u^\sharp,
\end{align}
with $u^\sharp$ solving the following PDE in weak sense
\begin{align}\label{DT010}
\sL u^\sharp&=\nabla u\prec b-\nabla u\Prec b+\nabla u\succ b+b\circ\nabla u\no
\\&\quad+\bar b\cdot\nabla u+hu-[\sL, \nabla u\Prec]\sI b,
\end{align}
where
\begin{align}
b\circ\nabla u&=b\circ(\nabla^2 u\prec \sI b)+(b\circ\nabla \sI b)\cdot \nabla u+\textrm{com}\no\\
&\quad+\textrm{com}_1+b\circ\nabla u^\sharp,\label{FQ22}
\end{align}
and
$$\textrm{com}_1:=b\circ\nabla [\nabla u\Prec \sI b-\nabla u\prec \sI b]$$
and
$$
\textrm{com}:=\mathrm{com}(\nabla u, \nabla \sI b,b).
$$

\bt\label{Th72}
Let $\ell\in(0,1)$ and $\kappa\in(0,\tfrac{(2-3\alpha)\ell}{6})$, $\eta\in(0,\frac{(1-\alpha)\ell}2)$. Suppose that
\begin{align*}
&b\in \cap_{T>0}\mB^\alpha_T(\rho_\kappa),\ \ \bar b,h\in \cap_{T>0}\mL^\infty_T(\rho_\eta),\\ &\beta\in(\alpha,(2-2\alpha-\tfrac{6\kappa}{\ell})\wedge(1-\frac{2\eta}{\ell})),\ \ \gamma\in(\alpha,2-2\alpha-\tfrac{4\kappa}{\ell}).
\end{align*}
The unique paracontrolled solution to PDE \eqref{PDE9} in the sense of Definition \ref{def:para1} with 
$$
(u,u^\sharp)\in\mS_T^{\gamma+\alpha}({\bf e}^\ell)\times L_T^\infty\bC^{\beta+1}({\bf e}^\ell)
$$
is zero.
\et
\begin{proof}
Let $T>0$. Choose $q$ large enough such that
 $$\alpha<\gamma\leq 2-2\alpha-\frac2q-\frac{4\kappa}\ell,\ \
 \alpha<\beta\leq (2-2\alpha-\frac2q-\frac{6\kappa}\ell)\wedge(1-\frac{2\eta}\ell).
 $$
First of all, by Lemmas \ref{graa} and \ref{lem:para}, we have
\begin{align*} 
\begin{split}
\|u\|_{\mS_T^{2-\alpha-\frac2q-\frac{4\kappa}\ell}({\bf e}^\ell)}
	&\lesssim\|b\prec\nabla u+b\succ\nabla u+b\circ\nabla u\|_{L^q_T\bC^{-\alpha}(\rho_{2\kappa}{\bf e}^\ell)}+\|\bar b\cdot\nabla u+hu\|_{L_T^qL^\infty(\rho_{\eta}{\bf e}^\ell)}
	\\&\lesssim \|b\|_{L^\infty_T\bC^{-\alpha}(\rho_\kappa)}\|\nabla u\|_{L^q_TL^\infty({\bf e}^\ell)}
	+\|b\circ\nabla u\|_{L^q_T\bC^{-\alpha}(\rho_{2\kappa}{\bf e}^\ell)}
	\\&\quad+\|\bar b\|_{\mL_T^\infty(\rho_\eta)}\|\nabla u\|_{L_T^qL^\infty({\bf e}^\ell)}+\|h\|_{\mL_T^\infty(\rho_\eta)}\| u\|_{L_T^qL^\infty({\bf e}^\ell)},
	\end{split}
\end{align*}
and by the corresponding version of Lemma \ref{lem:5.1} for exponential weight ${\bf e}^\ell$ (see \cite[Lemma 2.10]{PR18}),  
$$\aligned
\|u^\sharp\|_{L_T^\infty\bC^{\beta+1}({\bf e}^\ell)}
&\lesssim \|\nabla u\prec b-\nabla u\Prec b+\nabla u\succ b-[\sL, \nabla u\Prec]\sI b\|_{L^\infty_T\bC^{1-2\alpha-\frac2q-\frac{4\kappa}\ell}(\rho_\kappa{\bf e}^\ell)}\\
&\quad+\|b\circ\nabla u\|_{L^q_T\bC^{1-2\alpha}(\rho_{2\kappa}{\bf e}^\ell)}+\|\bar b\cdot\nabla u+hu\|_{\mL_T^\infty(\rho_{\eta}{\bf e}^\ell)}
\\
&\lesssim \|b\circ\nabla u\|_{L^q_T\bC^{1-2\alpha}(\rho_{2\kappa}{\bf e}^\ell)}+\|u\|_{\mS_T^{2-\alpha-\frac2q-\frac{4\kappa}\ell}({\bf e}^\ell)}
\\&\quad+\|\bar b\|_{\mL_T^\infty(\rho_\eta)}\|\nabla u\|_{\mL_T^\infty({\bf e}^\ell)}+\|h\|_{\mL_T^\infty(\rho_\eta)}\| u\|_{\mL_T^\infty({\bf e}^\ell)}
\\&\lesssim \|u\|_{\mS_T^{2-\alpha-\frac2q-\frac{4\kappa}\ell}({\bf e}^\ell)}+\|b\circ\nabla u\|_{L^q_T\bC^{1-2\alpha}(\rho_{2\kappa}{\bf e}^\ell)}.
\endaligned$$
Moreover, by Lemma \ref{Le32} with $(\rho,\bar\rho)=(\rho_\kappa,{\bf e}^\ell_t)$, 
\begin{align*}
\|(b\circ\nabla u)(t)\|_{\bC^{1-2\alpha}(\rho_{2\kappa}{\bf e}^\ell_t)}
\lesssim \|u\|_{\mS^{\gamma+\alpha}_t({\bf e}^\ell)}+\|u^\sharp(t)\|_{\bC^{\beta+1}(\rho_\kappa{\bf e}^\ell_t)}.
\end{align*}
Combining the above three estimates, we obtain
\begin{align*}
&\|u\|_{\mS_T^{\gamma+\alpha}({\bf e}^\ell)}+\|u^\sharp\|_{L_T^\infty\bC^{\beta+1}({\bf e}^\ell)}
	\\\lesssim&\|\nabla u\|_{L^q_TL^\infty({\bf e}^\ell)}
	+\| u\|_{L_T^qL^\infty({\bf e}^\ell)}+\|b\circ\nabla u\|_{L^q_T\bC^{1-2\alpha}(\rho_{2\kappa}{\bf e}^\ell)}
	\\\lesssim&\left(\int^T_0\Big(\|u\|^q_{\mS^{\gamma+\alpha}_t({\bf e}^\ell)}+\|u^\sharp(t)\|^q_{\bC^{\beta+1}(\rho_\kappa{\bf e}^\ell_t)}\Big)\dif t\right)^{1/q},
\end{align*}
which implies $u\equiv 0$ by Gronwall's inequality.
\end{proof}
	
\renewcommand{\theequation}{B.\arabic{equation}}

\section{Exponential moment estimates for SDEs}\label{sec:7.2}

In this section we consider the following SDE:
$$
\dif X_t=b(t,X_t)\dif t+\sigma(t,X_t)\dif W_t, \  X_0=x.
$$
We have the following  exponential moment estimates for $X_t$.
\bl\label{th:7.2}
Suppose that $\sigma$ is bounded and $b$ is linear growth. Then for any $\alpha\in[0,2)$ and $T, \gamma>0$, there is a constant $C>0$
such that for all $x\in\mR^d$,
$$
\mE\e^{\gamma\sup_{t\in[0,T]} \<X_t\>^\alpha}\leq C\e^{\<x\>^\alpha}.
$$
\el
\begin{proof}
	Let $\beta\in(\alpha,2)$. Recall $\<x\>^\beta=(1+|x|^2)^{\beta/2}$. By It\^o's formula, we have
	\begin{align*}
	M_t:=\e^{-\lambda t}\<X_t\>^\beta&=\<x\>^\beta+\int^t_0 \eta_s\dif s+\int^t_0\xi_s\dif W_s,
	\end{align*}
	where
	\begin{align*}
	\eta_s&:=\e^{-\lambda s}\beta \Big[X_s\cdot b(s,X_s)+ \tr(\sigma\sigma^*)(s,X_s)/2\Big]\<X_s\>^{\beta-2}\\
	&\quad+\beta(\tfrac{\beta}{2}-1)\e^{-\lambda s}|\sigma^*(s,X_s) X_s|^2 \<X_s\>^{\beta-4}-\lambda\e^{-\lambda s} \<X_s\>^\beta,
	\end{align*}
	and
	$$
	\xi_s:=\beta\e^{-\lambda s}\sigma^*(s,X_s)X_s\<X_s\>^{\beta-2}.
	$$
	By the linear growth of $b$ and the boundedness of $\sigma$, there is a $\lambda$ large enough so that
	$$
	\eta_s\leq 0
	$$
	and
	$$
	|\xi_s|^2\leq C\e^{-\lambda s}\<X_s\>^{2(\beta-1)}\leq C M^{2-\frac{2}{\beta}}_s.
	$$
	Now by \cite[Theorem 1.1]{Hu09}, we obtain the desired estimate.
\end{proof}	

\section*{Acknowlegement}
 We are very grateful to Nicolas Perkowski for proposing this problem to us and 
 sharing his idea on this problem with us (especially the idea mentioned in Remark \ref{re}), where we benefit a lot.   
 X. Zhang is partially supported by NSFC (No. 11731009). R. Zhu and X. Zhu are grateful to
the financial supports of the NSFC (No. 11671035, 11771037,
11922103) and the financial support by the DFG through the CRC 1283 “Taming uncertainty
and profiting from randomness and low regularity in analysis, stochastics and their applications” and the support by key Lab of Random Complex Structures and Data Science, Chinese Academy of Science.

\end{document}